\numberwithin{equation}{section}
\newtheorem{thm}{Theorem}[section]
\newtheorem{prop}[thm]{Proposition}
\newtheorem{lemm}[thm]{Lemma}
\newtheorem{cor}[thm]{Corollary}
\newtheorem{defi}[thm]{Definition}
\newtheorem{rmk}[thm]{Remark}
\def\fk#1{\mathfrak{#1}}
\def\cal#1{\mathcal{#1}}
\def\scr#1{\mathscr{#1}}
\def\rm#1{\mathrm{#1}}
\def\bb#1{\mathbb{#1}}
\def\sf#1{\mathsf{#1}}
\def\wt#1{\widetilde{#1}}
\def\ol#1{\overline{#1}}
\def\wong#1{\colorbox[gray]{0.7}{\textcolor{white}{\text{#1}}}}
\def\wonlg#1{\colorbox[gray]{0.9}{\textcolor{white}{\text{#1}}}}
\def\italicrd#1{\makebox[4.5em][l]{\vphantom{#1}\rotatebox{70}{\scalebox{0.5}[1.5]{\rotatebox{-42}{\smash{\rlap{\text{#1}}}}}}}}
\def\italicld#1{\makebox[4.5em][l]{\vphantom{#1}\rotatebox{35}{\scalebox{1.9}[0.5]{\rotatebox{-70}{\smash{\rlap{\text{#1}}}}}}}}
\def\italicside#1{\makebox[4.5em][l]{\vphantom{#1}\rotatebox{30}{\scalebox{0.7}[1.3]{\rotatebox{-42}{\smash{\rlap{\text{#1}}}}}}}}
\newcommand{\ds}{\displaystyle}
\newcommand{\QQ}{\mathbb{Q}}
\newcommand{\Qlb}{\overline{\mathbb{Q}}_\ell}
\newcommand{\ZZ}{\mathbb{Z}}
\newcommand{\GL}{\mathrm{GL}}
\newcommand{\Hom}{\mathrm{Hom}}
\newcommand{\Perv}{\mathrm{Perv}}
\newcommand{\Rep}{\mathrm{Rep}}
\newcommand{\Groth}{\underline{\mathrm{Groth}}}
\newcommand{\Spr}{\underline{\mathrm{Spr}}}
\newcommand{\Gr}{\mathrm{Gr}}
\newcommand{\sm}{\mathrm{sm}}
\newcommand{\rs}{\mathrm{rs}}
\newcommand{\Iff}{\Longleftrightarrow}
\newcommand{\relmiddle}[1]{\mathrel{}\middle#1\mathrel{}}
\DeclareMathOperator{\Spec}{Spec}
\DeclareRobustCommand{\dwt}[1]{{%
\mathpalette\double@widetilde{#1}%
}}
\DeclareRobustCommand{\double@widetilde}[2]{%
\sbox\z@{$\m@th#1\widetilde{#2}$}%
\ht\z@=.85\ht\z@
\widetilde{\box\z@}%
}
\newcommand{\prism}[9]{
\def\tempa{#1}%
\def\tempb{#2}%
\def\tempc{#3}%
\def\tempd{#4}%
\def\tempe{#5}%
\def\tempf{#6}%
\def\tempg{#7}%
\def\temph{#8}%
\def\tempi{#9}%
\prismcontinued
}
\newcommand{\prismcontinued}[9]{
\def\tempj{#1}%
\def\tempk{#2}%
\def\templ{#3}%
\def\tempm{#4}%
\def\tempn{#5}%
\def\tempo{#6}%
\def\tempp{#7}%
\def\tempq{#8}%
\def\tempr{#9}%
\prismcontinuedcontinued
}
\newcommand{\prismcontinuedcontinued}[2]{
\xymatrix{
\tempa\ar[rr]^{\tempb}\ar[rd]^{\tempd}\ar[dd]_{\tempq}\ar@{}[rrdd]|{\wonlg{\tempr}}\ar@{}[rrrd]|{\italicrd{\wong{\tempe}}}
\ar@<5ex>@{}[dd]|{\wong{\tempg}}
&&\tempc\ar[rd]^{\tempf}\ar@{.>}[dd]^(.25){#1}|\hole \ar@<5ex>@{}[dd]|{\wonlg{#2}}&\\
&\temph\ar[rr]_{\tempi}\ar[ld]_{\tempk}\ar@{}[rd]|{\italicld{\wong{\templ}}}&&\tempj\ar[ld]^{\tempm}\\
\tempn\ar[rr]_{\tempo}&&\tempp&
}
}
\newcommand{\opprismcontinuedcontinued}[2]{
\xymatrix{
\tempa\ar[rr]^{\tempb}\ar@{<-}[rd]^{\tempd}\ar@{<-}[dd]_{\tempq}\ar@{}[rrdd]|{\wonlg{\tempr}}\ar@{}[rrrd]|{\italicrd{\wong{\tempe}}}
\ar@<5ex>@{}[dd]|{\wong{\tempg}}
&&\tempc\ar@{<-}[rd]^{\tempf}\ar@{<.}[dd]^(.25){#1}|\hole \ar@<5ex>@{}[dd]|{\wonlg{#2}}&\\
&\temph\ar[rr]_{\tempi}\ar@{<-}[ld]_{\tempk}\ar@{}[rd]|{\italicld{\wong{\templ}}}&&\tempj\ar@{<-}[ld]^{\tempm}\\
\tempn\ar[rr]_{\tempo}&&\tempp&
}
}
\newcommand{\opopprismcontinuedcontinued}[2]{
\xymatrix{
\tempa\ar@{<-}[rr]^{\tempb}\ar@{<-}[rd]^{\tempd}\ar@{<-}[dd]_{\tempq}\ar@{}[rrdd]|{\wonlg{\tempr}}\ar@{}[rrrd]|{\italicrd{\wong{\tempe}}}
\ar@<5ex>@{}[dd]|{\wong{\tempg}}
&&\tempc\ar@{<-}[rd]^{\tempf}\ar@{<.}[dd]^(.25){#1}|\hole \ar@<5ex>@{}[dd]|{\wonlg{#2}}&\\
&\temph\ar@{<-}[rr]_{\tempi}\ar@{<-}[ld]_{\tempk}\ar@{}[rd]|{\italicld{\wong{\templ}}}&&\tempj\ar@{<-}[ld]^{\tempm}\\
\tempn\ar@{<-}[rr]_{\tempo}&&\tempp&
}
}
\newcommand{\horopprismcontinuedcontinued}[2]{
\xymatrix{
\tempa\ar@{<-}[rr]^{\tempb}\ar[rd]^{\tempd}\ar[dd]_{\tempq}\ar@{}[rrdd]|{\wonlg{\tempr}}\ar@{}[rrrd]|{\italicrd{\wong{\tempe}}}
\ar@<5ex>@{}[dd]|{\wong{\tempg}}
&&\tempc\ar[rd]^{\tempf}\ar@{.>}[dd]^(.25){#1}|\hole \ar@<5ex>@{}[dd]|{\wonlg{#2}}&\\
&\temph\ar@{<-}[rr]_{\tempi}\ar[ld]_{\tempk}\ar@{}[rd]|{\italicld{\wong{\templ}}}&&\tempj\ar[ld]^{\tempm}\\
\tempn\ar@{<-}[rr]_{\tempo}&&\tempp&
}
}
\newcommand{\leqprismcontinuedcontinued}[2]{
\xymatrix@!C{
\tempa\ar[rr]^{\tempb}\ar@{=}[rd]^{\tempd}\ar@{=}[dd]_{\tempq}\ar@{}[rrdd]|{\wonlg{\tempr}}\ar@{}[rrrd]|{\italicrd{\wong{\tempe}}}
\ar@<5ex>@{}[dd]|{\wong{\tempg}}
&&\tempc\ar[rd]^{\tempf}\ar@{.>}[dd]^(.25){#1}|\hole \ar@<5ex>@{}[dd]|{\wonlg{#2}}&\\
&\temph\ar[rr]_{\tempi}\ar@{=}[ld]_{\tempk}\ar@{}[rd]|{\italicld{\wong{\templ}}}&&\tempj\ar[ld]^{\tempm}\\
\tempn\ar[rr]_{\tempo}&&\tempp&
}
}
\newcommand{\leqopprismcontinuedcontinued}[2]{
\xymatrix@!C{
\tempa\ar[rr]^{\tempb}\ar@{=}[rd]^{\tempd}\ar@{=}[dd]_{\tempq}\ar@{}[rrdd]|{\wonlg{\tempr}}\ar@{}[rrrd]|{\italicrd{\wong{\tempe}}}
\ar@<5ex>@{}[dd]|{\wong{\tempg}}
&&\tempc\ar@{<-}[rd]^{\tempf}\ar@{<.}[dd]^(.25){#1}|\hole \ar@<5ex>@{}[dd]|{\wonlg{#2}}&\\
&\temph\ar[rr]_{\tempi}\ar@{=}[ld]_{\tempk}\ar@{}[rd]|{\italicld{\wong{\templ}}}&&\tempj\ar@{<-}[ld]^{\tempm}\\
\tempn\ar[rr]_{\tempo}&&\tempp&
}
}
\newcommand{\reqprismcontinuedcontinued}[2]{
\xymatrix@!C{
\tempa\ar[rr]^{\tempb}\ar[rd]^{\tempd}\ar[dd]_{\tempq}\ar@{}[rrdd]|{\wonlg{\tempr}}\ar@{}[rrrd]|{\italicrd{\wong{\tempe}}}
\ar@<5ex>@{}[dd]|{\wong{\tempg}}
&&\tempc\ar@{=}[rd]^{\tempf}\ar@2{.}[dd]^(.25){#1}|\hole \ar@<5ex>@{}[dd]|{\wonlg{#2}}&\\
&\temph\ar[rr]_{\tempi}\ar[ld]_{\tempk}\ar@{}[rd]|{\italicld{\wong{\templ}}}&&\tempj\ar@{=}[ld]^{\tempm}\\
\tempn\ar[rr]_{\tempo}&&\tempp&
}
}
\newcommand{\reqopopprismcontinuedcontinued}[2]{
\xymatrix@!C{
\tempa\ar@{<-}[rr]^{\tempb}\ar@{<-}[rd]^{\tempd}\ar@{<-}[dd]_{\tempq}\ar@{}[rrdd]|{\wonlg{\tempr}}\ar@{}[rrrd]|{\italicrd{\wong{\tempe}}}
\ar@<5ex>@{}[dd]|{\wong{\tempg}}
&&\tempc\ar@{=}[rd]^{\tempf}\ar@2{.}[dd]^(.25){#1}|\hole \ar@<5ex>@{}[dd]|{\wonlg{#2}}&\\
&\temph\ar[rr]_{\tempi}\ar@{<-}[ld]_{\tempk}\ar@{}[rd]|{\italicld{\wong{\templ}}}&&\tempj\ar@{=}[ld]^{\tempm}\\
\tempn\ar[rr]_{\tempo}&&\tempp&
}
}
\newcommand{\cubecontinuedcontinued}[8]{
\xymatrix@!C{
\tempa\ar[rr]^{\tempb}\ar[rd]^{\tempd}\ar[dd]_{\tempj}
\ar@{}[rrrd]|{\italicrd{\wong{\tempe}}}\ar@{}[rrdd]|{\wonlg{\tempk}}\ar@{}@<1.5ex>[rddd]|{\italicside{\wong{\tempm}}}
&&\tempc\ar[rd]^{\tempf}\ar@{.>}[dd]^(.75){\templ}\ar@{}@<1.5ex>[rddd]|{\italicside{\wonlg{\tempn}}}&\\
&\tempg\ar[rr]^(.25){\temph}\ar[dd]_(.25){\tempo}\ar@{}[rrdd]|{\wong{\tempp}}&&\tempi\ar[dd]^{\tempq}\\
\tempr\ar@{.>}[rr]^(.75){#1}\ar[rd]^{#3}\ar@{}[rrrd]|{\italicrd{\wonlg{#4}}}&&#2\ar@{.>}[rd]^{#5}&\\
&#6\ar[rr]^{#7}&&#8
}
}
\newcommand{\opcubecontinuedcontinued}[8]{
\xymatrix@!C{
\tempa\ar[rr]^{\tempb}\ar[rd]^{\tempd}\ar@{<-}[dd]_{\tempj}
\ar@{}[rrrd]|{\italicrd{\wong{\tempe}}}\ar@{}[rrdd]|{\wonlg{\tempk}}\ar@{}@<1.5ex>[rddd]|{\italicside{\wong{\tempm}}}
&&\tempc\ar[rd]^{\tempf}\ar@{<.}[dd]^(.75){\templ}\ar@{}@<1.5ex>[rddd]|{\italicside{\wonlg{\tempn}}}&\\
&\tempg\ar[rr]^(.25){\temph}\ar@{<-}[dd]_(.25){\tempo}\ar@{}[rrdd]|{\wong{\tempp}}&&\tempi\ar@{<-}[dd]^{\tempq}\\
\tempr\ar@{.>}[rr]^(.75){#1}\ar[rd]^{#3}\ar@{}[rrrd]|{\italicrd{\wonlg{#4}}}&&#2\ar@{.>}[rd]^{#5}&\\
&#6\ar[rr]^{#7}&&#8
}
}
\newcommand{\opopcubecontinuedcontinued}[8]{
\xymatrix@!C{
\tempa\ar[rr]^{\tempb}\ar@{<-}[rd]^{\tempd}\ar@{<-}[dd]_{\tempj}
\ar@{}[rrrd]|{\italicrd{\wong{\tempe}}}\ar@{}[rrdd]|{\wonlg{\tempk}}\ar@{}@<1.5ex>[rddd]|{\italicside{\wong{\tempm}}}
&&\tempc\ar@{<-}[rd]^{\tempf}\ar@{<.}[dd]^(.75){\templ}\ar@{}@<1.5ex>[rddd]|{\italicside{\wonlg{\tempn}}}&\\
&\tempg\ar[rr]^(.25){\temph}\ar@{<-}[dd]_(.25){\tempo}\ar@{}[rrdd]|{\wong{\tempp}}&&\tempi\ar@{<-}[dd]^{\tempq}\\
\tempr\ar@{.>}[rr]^(.75){#1}\ar@{<-}[rd]^{#3}\ar@{}[rrrd]|{\italicrd{\wonlg{#4}}}&&#2\ar@{<.}[rd]^{#5}&\\
&#6\ar[rr]^{#7}&&#8
}
}
\newcommand{\horopcubecontinuedcontinued}[8]{
\xymatrix@!C{
\tempa\ar@{<-}[rr]^{\tempb}\ar[rd]^{\tempd}\ar[dd]_{\tempj}
\ar@{}[rrrd]|{\italicrd{\wong{\tempe}}}\ar@{}[rrdd]|{\wonlg{\tempk}}\ar@{}@<1.5ex>[rddd]|{\italicside{\wong{\tempm}}}
&&\tempc\ar[rd]^{\tempf}\ar@{.>}[dd]^(.75){\templ}\ar@{}@<1.5ex>[rddd]|{\italicside{\wonlg{\tempn}}}&\\
&\tempg\ar@{<-}[rr]^(.25){\temph}\ar[dd]_(.25){\tempo}\ar@{}[rrdd]|{\wong{\tempp}}&&\tempi\ar[dd]^{\tempq}\\
\tempr\ar@{<.}[rr]^(.75){#1}\ar[rd]^{#3}\ar@{}[rrrd]|{\italicrd{\wonlg{#4}}}&&#2\ar@{.>}[rd]^{#5}&\\
&#6\ar@{<-}[rr]^{#7}&&#8
}
}
\newcommand{\leqcubecontinuedcontinued}[8]{
\xymatrix@!C{
\tempa\ar[rr]^{\tempb}\ar@{=}[rd]^{\tempd}\ar@{=}[dd]_{\tempj}
\ar@{}[rrrd]|{\italicrd{\wong{\tempe}}}\ar@{}[rrdd]|{\wonlg{\tempk}}\ar@{}@<1.5ex>[rddd]|{\italicside{\wong{\tempm}}}
&&\tempc\ar[rd]^{\tempf}\ar@{.>}[dd]^(.75){\templ}\ar@{}@<1.5ex>[rddd]|{\italicside{\wonlg{\tempn}}}&\\
&\tempg\ar[rr]^(.25){\temph}\ar@{=}[dd]_(.25){\tempo}\ar@{}[rrdd]|{\wong{\tempp}}&&\tempi\ar[dd]^{\tempq}\\
\tempr\ar@{.>}[rr]^(.75){#1}\ar@{=}[rd]^{#3}\ar@{}[rrrd]|{\italicrd{\wonlg{#4}}}&&#2\ar@{.>}[rd]^{#5}&\\
&#6\ar[rr]^{#7}&&#8
}
}
\newcommand{\leqveropcubecontinuedcontinued}[8]{
\xymatrix@!C{
\tempa\ar[rr]^{\tempb}\ar@{=}[rd]^{\tempd}\ar@{=}[dd]_{\tempj}
\ar@{}[rrrd]|{\italicrd{\wong{\tempe}}}\ar@{}[rrdd]|{\wonlg{\tempk}}\ar@{}@<1.5ex>[rddd]|{\italicside{\wong{\tempm}}}
&&\tempc\ar[rd]^{\tempf}\ar@{<.}[dd]^(.75){\templ}\ar@{}@<1.5ex>[rddd]|{\italicside{\wonlg{\tempn}}}&\\
&\tempg\ar[rr]^(.25){\temph}\ar@{=}[dd]_(.25){\tempo}\ar@{}[rrdd]|{\wong{\tempp}}&&\tempi\ar@{<-}[dd]^{\tempq}\\
\tempr\ar@{.>}[rr]^(.75){#1}\ar@{=}[rd]^{#3}\ar@{}[rrrd]|{\italicrd{\wonlg{#4}}}&&#2\ar@{.>}[rd]^{#5}&\\
&#6\ar[rr]^{#7}&&#8
}
}
\title{\Large{Geometric Satake equivalence in mixed characteristic and Springer correspondence}}
\author{Katsuyuki Bando\footnote{Katsuyuki Bando, the University of Tokyo, Graduate School of Mathematical Sciences, Japan, \texttt{kbando@ms.u-tokyo.ac.jp}}}
\begin{document}
\maketitle
 \abstract{The geometric Satake equivalence and the Springer correspondence are closely related when restricting to small representations of the Langlands dual group.
We prove this result for \'{e}tale sheaves, including the case of the mixed characteristic affine Grassmannian, assuming sufficiently large ramification.
In this process, we construct a canonical partial isomorphism between a mixed characteristic affine Grassmannian under the assumption of sufficiently large ramification and an equal characteristic one.}
\section{Introduction}
The geometric Satake equivalence is an equivalence between the category of perverse sheaves on the affine Grassmannian of a reductive algebraic group and the category of representations of its Langlands dual group.
This equivalence, relating a geometric category to a representation-theoretic category, is a fundamental tool in geometric representation theory and also in number theory.

Several versions of this equivalence exist, including one involving the affine Grassmannian defined using the equal characteristic field $k((t))$ with $k$ a field, and one involving the mixed characteristic field $\QQ_p$.
The former, which we mainly call the geometric Satake equivalence in equal characteristic in this paper, was proved in \cite{MV}, and the latter, the geometric Satake in mixed characteristic, was proved in \cite{Zhumixed} for $\Qlb$-coefficients, and in \cite{Yu} for integral or mod $\ell$ coefficients.
Furthermore, Fargues--Scholze \cite{FS} established a geometric Satake equivalence for the $B_{dR}$-affine Grassmannian in the framework of perfectoid spaces and diamonds.
This construction plays a foundational role in their geometrization of the local Langlands correspondence.

In this paper, we first construct a canonical isomorphism between a subscheme of Zhu's mixed characteristic affine Grassmannian (under the assumption of sufficiently large ramification) and a subscheme of an equal characteristic one in \S \ref{scn:defofMG}:
\begin{thm} \label{thm:Grisomgeneralintro}
Let $F_1,F_2$ be two complete discrete valuation fields with the same residue field $k$.
Let $(G_{\ZZ},B_{\ZZ},T_{\ZZ})$ be a triple of a split reductive group over $\ZZ$, its Borel group and its maximal torus such that $T\subset B$.
Put $(G_1,B_1,T_1)=(G_{\ZZ},B_{\ZZ},T_{\ZZ})\otimes_{\ZZ}\cal{O}_{F_1}$ and $(G_2,B_2,T_2)=(G_{\ZZ},B_{\ZZ},T_{\ZZ})\otimes_{\ZZ}\cal{O}_{F_2}$.
Let $\mu$ be an element in $\mathbb{X}_{\bullet}^+(T_{\ZZ})$.
Then there exists a constant $N:=N_{G_{\ZZ},\mu}\in \mathbb{Z}_{>0}$ depending only on the isomorphism class of $G_{\ZZ}$ and $\mu$ such that if $v_{F_1}(p)\geq N$ and $v_{F_2}(p)\geq N$, then there is a ``canonical'' isomorphism
\begin{align*}
\alpha_{G_{\ZZ}, \mu,\cal{O}_1,\cal{O}_2}\colon \Gr_{G_1,\leq \mu} \cong \Gr_{G_2,\leq \mu}.
\end{align*}
Moreover, the Schubert cells $\Gr_{G_1,\lambda}$ and $\Gr_{G_2,\lambda}$ correspond under $\alpha_{G_{\ZZ}, \mu,\cal{O}_1,\cal{O}_2}$ for any $\lambda\leq \mu$.
\end{thm}
The meaning of ``canonical'' is explained after Theorem \ref{thm:Grisomgeneral}.

As an application, we will prove the relationship between the geometric Satake equivalence and the Springer correspondence for \'{e}tale sheaves, including the case of the mixed characteristic, assuming a sufficiently large ramification.
For a complex reductive group $G$, this is studied in \cite{AHR}.
The precise statement is as follows:

Let $k$ be an algebraically closed field of characteristic $p$ and let $G$ be a reductive group over $\cal{O}$, where $\cal{O}$ is $k[[t]]$ or a totally ramified finite extension of the ring of Witt vectors $W(k)$.
Assume that $p$ is a good prime for $G$.

We denote the affine Grassmannian by $\Gr_G$.
Let $\cal{N}_G$ be the nilpotent cone in the Lie algebra of $\bar{G}$ (the reduction of $G$).
Let $\ell$ be a prime different from $p$.
Consider the following four functors (see \S \ref{scn:GSSC} for precise definitions):
\begin{enumerate}
\item[(1)] The restriction of the geometric Satake equivalence $\scr{S}\colon \Perv_{L^+G}(\Gr,\Qlb)\to \Rep(\check{G},\Qlb)$ to small representations of the Langlands dual group:
\[
\scr{S}_G^{\sm}\colon \Perv_{L^+G}(\Gr^{\sm}_G,\Qlb)\to \Rep(\check{G},\Qlb)_{\sm}.
\]
\item[(2)] The Springer correspondence
\[
\bb{S}_G=\Hom(\Spr,-)\colon \Perv_{\bar{G}}(\cal{N}_G^{p^{-\infty}},\Qlb)\cong \Perv_{\bar{G}}(\cal{N}_G,\Qlb)\to \Rep(W_G,\Qlb)
\]
where $\Spr\in \Perv_{\bar{G}}(\cal{N}_G,\Qlb)$ is the Springer sheaf and $W_G$ is the (finite) Weyl group of $G$.
\item[(3)] By taking the zero weight space of a representation of $\check{G}$ and tensoring it with the sign character $\varepsilon\colon W_G\to \Qlb^{\times}$, we obtain a functor
\[
\Phi_{\check{G}}\colon \Rep(\check{G},\Qlb)_{\sm}\to \Rep(W_G,\Qlb).
\]
\item[(4)] Assuming sufficiently large ramification of $\cal{O}$, there is an open dense subspace $\cal{M}_G\subset \Gr^{\sm}_G$ and $\pi_G\colon \cal{M}_G\to \cal{N}_G^{p^{-\infty}}$.
The map $\pi_G$ is the perfection of a finite map under the assumption that $p$ is good for $G$.
We obtain a functor
\[
\Psi_{G}:=(\pi_G)_*\circ (j_G)^!\colon \Perv_{L^+G}(\Gr^{\sm}_G,\Qlb) \to \Perv_{\bar{G}}(\cal{N}^{p^{-\infty}}_G,\Qlb)
\]
where $j_G\colon \cal{M}_G\hookrightarrow \Gr^{\sm}_G$ is the inclusion.
\end{enumerate}
We have the diagram
\[
\xymatrix{
\Perv_{L^+G}(\Gr^{\sm}_G,\Qlb)\ar[r]^-{\scr{S}^\sm_G}\ar[d]_-{\Psi_G}&\Rep(\check{G},\Qlb)_{\sm}\ar[d]^-{\Phi_{\check{G}}}\\
\Perv_{\bar{G}}(\cal{N}^{p^{-\infty}}_G,\Qlb)\ar[r]_{\mathbb{S}_G}&\Rep(W_G,\Qlb).\\
}
\]
The theorem is the following:
\begin{thm}\label{thm:mainfirst}
Assume sufficiently large ramification of $\cal{O}$ and that $p$ is good for $G$.
Then, there is a canonical isomorphism of functors:
\[
\Phi_{\check{G}} \circ \scr{S}^\sm_G \Iff \mathbb{S}_G \circ \Psi_G
\]
\end{thm}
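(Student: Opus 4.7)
The plan is to reduce the theorem to the equal characteristic case and then adapt the argument of Achar--Henderson--Riche. The canonical isomorphism between the mixed characteristic affine Grassmannian (under sufficient ramification) and the equal characteristic one, promised in the abstract and presumably constructed earlier in the paper, should identify the Satake categories together with the functors $\scr{S}^\sm_G$, and by transport of structure should also identify the geometric data $(\cal{M}_G,\pi_G,\cal{N}_G^{p^{-\infty}})$ and hence the functor $\Psi_G$. Since the target functors $\Phi_{\check{G}}$ and $\bb{S}_G$ are intrinsic to $\check{G}$ and to $\bar{G}$ respectively and do not depend on the choice of $\cal{O}$, it suffices to prove the theorem when $\cal{O}=k[[t]]$.

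In the equal characteristic case, the strategy is to construct the natural transformation geometrically. For $\cal{F}\in\Perv_{L^+G}(\Gr_G^\sm,\Qlb)$, one has $\bb{S}_G\circ\Psi_G(\cal{F})=\Hom(\Spr,(\pi_G)_*(j_G)^!\cal{F})$, which after adjunction is governed by the cohomology of $(j_G)^!\cal{F}$ over $\cal{M}_G$, with the Weyl group action inherited from the Springer action on $\Spr$. On the other hand, $\Phi_{\check{G}}\circ\scr{S}^\sm_G(\cal{F})$ is the sign-twisted zero weight space of the corresponding representation, which by a hyperbolic localization argument of Mirkovi\'c--Vilonen type is identified with the cohomology of $\cal{F}$ over the semi-infinite orbit $S_0\subset\Gr_G$, again equipped with a natural $W_G$-action. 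The key geometric input is that on the small locus, the intersection $S_0\cap\Gr_G^\sm$ maps (up to perfection) to an open piece of $\cal{M}_G$ via $\pi_G$, and the two Weyl group actions match up to the sign character, which accounts for the difference between the shifts used by $j^!$ versus $j^*$.

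To verify the constructed natural transformation is an isomorphism, I would invoke the monoidal structure on the restriction functor of Satake categories, constructed earlier in the paper. This reduces the verification to a set of generators of $\Perv_{L^+G}(\Gr_G^\sm,\Qlb)$, concretely the intersection cohomology sheaves associated to fundamental (quasi-)minuscule small coweights. For these generators all sheaves admit an explicit description in terms of constant sheaves on smooth strata, and the isomorphism can be checked by direct computation. The main obstacle I expect will be the interaction with the monoidal structure: since small representations are not closed under tensor products, the monoidal structure on restriction is only partial, and carefully transporting the natural transformation across it requires the full strength of the structure constructed in the preceding sections. A secondary technical obstacle is verifying that the geometric data $(\cal{M}_G,\pi_G)$ are compatible with the mixed-to-equal characteristic isomorphism at the level of perfections, so that the functor $\Psi_G$ really does transport across the reduction performed in the first paragraph.
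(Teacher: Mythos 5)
Your proposal and the paper's proof take genuinely different routes, and the one you sketch has significant gaps.

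\emph{On the reduction to equal characteristic.} The paper does construct a canonical $L^CG$-equivariant isomorphism $\Gr^\sm_G\cong\Gr^\sm_{G^\flat}$ and uses it to \emph{define} $\cal{M}_G$, $\pi_G$ and hence $\Psi_G$ in mixed characteristic; so the identification of $\Psi_G$ is by fiat, as you guess. But the identification of $\scr{S}^\sm_G$ with $\scr{S}^\sm_{G^\flat}$ is \emph{not} automatic, because the geometric Satake equivalence is pinned down by the monoidal (convolution) structure on the \emph{whole} Satake category, and convolution takes you out of $\Gr^\sm$. The mixed-to-equal isomorphism is only for $\Gr^\sm$, so it does not transport the Tannakian data. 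More importantly, the entire reason the paper does \emph{not} simply reduce to the AHR situation is explained in the introduction: the monoidal structure on the restriction functor $\overline{\fk{R}}^G_L$---which is the linchpin of the Levi reduction in both AHR and here---is constructed in equal characteristic via the fusion product, which has no analogue for the Witt-vector affine Grassmannian. Theorem~\ref{thm:monoidalstrofRGL} and §\ref{sss:monoidalstronResGL} exist precisely to give a new, fusion-free proof of that monoidal structure so that the argument can be run intrinsically in mixed characteristic; you cannot obtain it by transport, because the relevant twisted external products live on $\Gr_G\wtimes\Gr_G$, outside the small locus where the comparison isomorphism exists.

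\emph{On the middle paragraph.} The proof you sketch for the equal characteristic case---construct a natural transformation by directly matching the Springer $W$-action on $\Hom(\Spr,\Psi_G(\cal{F}))$ with the hyperbolic-localization $W$-action on the zero weight space, then check it is an isomorphism on quasi-minuscule generators---is not AHR's argument and is not the paper's argument either. No such direct construction is carried out anywhere. Matching those two $W_G$-actions in one step is exactly the thing the whole apparatus of §\ref{scn:plan}–§\ref{scn:semisimplerank1} is designed to \emph{avoid}: the paper constructs restriction functors to Levi subgroups in all four corner categories, proves transitivity and intertwining isomorphisms, shows four prisms commute, builds the isomorphism $\alpha_T$ in the trivial torus case, shows that the induced isomorphism $\phi_{G,T}$ is $W_L$-equivariant for each rank-one Levi $L$ by an explicit Schur--Weyl/sign-representation computation, and concludes $W_G$-equivariance because the rank-one $W_L$ generate $W_G$. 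The phrase ``the two Weyl group actions match up to the sign character'' in your sketch is precisely the content of the theorem, not an input to its proof; as written it is circular. Your final paragraph's appeal to the monoidal structure of the restriction functor for a ``reduction to generators'' is also not how that structure is used: it is used to build the functor $\rm{R}^{\check{G}}_{\check{L}}$ via the Tannakian formalism, not to reduce the verification of a natural transformation to a set of objects.
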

The meaning of ``sufficiently large ramification'' is explained in the proof of Theorem \ref{thm:Grisomgeneral}.
For many parts of the proof, the same method as \cite{AHR} can be used:
We use the same method as \cite{AHR} to reduce to the case where $G$ is semisimple of rank 1, which is explained in \cite[\S 3]{AHR} or \S \ref{scn:plan} in the present paper.

However, the construction of the open subset $\cal{M}_G\subset \Gr^{\sm}_G$ in \cite[\S 2.6]{AHR} does not work in the mixed characteristic case.
In fact, the subset $\cal{M}_G$ in equal characteristic is defined as the intersection of $\Gr^{\sm}_G$ with a certain $G(k[t^{-1}])$-orbit $\Gr_{G,0}^-\subset \Gr_G$.
There is no subalgebra in mixed characteristic corresponding to $k[t^{-1}]\subset k((t))$.
That is why we use Theorem \ref{thm:Grisomgeneralintro}.
Thanks to this theorem, we can define $\cal{M}_G$ in the mixed characteristic case as the pullback of $\cal{M}_G$ in the equal characteristic case.

Moreover, we cannot use the method in \cite{AHR} for the proof in the case where $G$ is semisimple of rank 1.
This is because the global version of the affine Grassmannian used in \cite[\S 8]{AHR} does not exist in mixed characteristic.
Instead, we use a method in \cite[\S 4.1]{AH}.

Additionally, we remark that we use the result \cite[Proposition VI.9.6]{FS} in Fargues--Scholze's paper in order to get a monoidal structure of the restriction functor, called the constant term functor, from the Satake category of $G$ to that of its Levi subgroup.

We believe that the method used in Theorem \ref{thm:Grisomgeneralintro} can also be used for a partial isomorphism between an object in equal characteristic and its mixed characteristic version other than affine Grassmannians.
We also hope that the result in this paper will be linked to the coherent Springer theory explained in \cite{BCHN}.
\subsection*{Acknowledgement}
The author is really grateful to his advisor Naoki Imai for useful discussions and comments.
He also thanks the referees for their careful reading of the paper and for their valuable suggestions and comments.
This work is supported by JSPS KAKENHI Grant Number 22J22110 and 25KJ0189.
The author was supported by Grant-in-Aid for JSPS Fellows and FMSP, WINGS Program, the University of Tokyo.
\section{Notations}
 Let $k$ be an algebraically closed field of characteristic $p>0$.
Fix a prime number $\ell\neq p$.
In this paper, all sheaves are $\Qlb$-sheaves for the \'etale topology.
If $X$ is an (ind-)$k$-variety or the perfection of it, we write $D^b(X,\Qlb)$ or simply $D^b(X)$ for the $\ell$-adic bounded constructible derived category of $X$.
If $H$ is a connected (pro-)algebraic group over $k$ or the perfection of it acting on $X$,  we write $\Perv_H(X,\Qlb)$ for the full abelian subcategory (of $D^b(X,\Qlb)$) consisting of $H$-equivariant perverse sheaves.
We write $D^b_H(X,\Qlb)$ for the $H$-equivariant derived category defined in \cite{BL}, and $\Perv'_H(X,\Qlb)$ for its core with respect to the perverse $t$-structure.
Note that the forgetful functor $\sf{For}\colon D^b_H(X,\Qlb) \to D^b(X,\Qlb)$ induces the equivalence $\sf{For}\colon \Perv'_H(X,\Qlb)\to \Perv_H(X,\Qlb)$ (see \cite[Proposition 2.5.3]{BL}).

For a $k$-algebra $R$, let $W(R)$ denote the ring of Witt vectors, and $W_h(R)$ the truncated Witt vectors of length $h$.
For $a\in R$, write $[a]$ for the Teichm\"uller lift of $a$.
Let $F$ be $k((t))$ or a totally ramified finite extension of $W(k)[1/p]$.
Let $\cal{O}$ be the ring of integers of $F$.
We take a uniformizer $\varpi$ of $\cal{O}$ (if $F=k((t))$, set $\varpi=t$).\\
For a $k$-algebra $R$, we write
\begin{align*}
W_{\cal{O}}(R)&=W(R)\otimes_{W(k)} \cal{O},\\
\cal{O}_R&=\begin{cases}
W_{\cal{O}}(R)&\text{if $F$ is a totally ramified finite extension of $W(k)[1/p]$,}\\
R[[t]]&\text{if $F=k((t))$}
\end{cases}
\end{align*}
and 
\begin{align*}
W_{\cal{O},h}(R)&=W_{\cal{O}}(R)/\varpi^h,\\
\cal{O}_{R,h}&=\cal{O}_R/\varpi^h.
\end{align*}
Throughout this paper, $G$ is a connected reductive group over $\cal{O}$.
Since any reductive group is \'{e}tale locally split, and since $k$ is algebraically closed, it follows that $G$ is split.
We choose a Borel subgroup $B$ of $G$ and a maximal torus $T$ of $B$.
Let $U$ be the unipotent radical of $B$.\\
We write $\bar{G},\bar{B},\bar{U},\bar{T}$ for the reduction modulo $\varpi$ of $G,B,U,T$, respectively and $\fk{g},\fk{b},\fk{u},\fk{t}$ for the Lie algebra of $\bar{B},\bar{U},\bar{T}$, respectively.
Let $\cal{N}_G\subset \fk{g}$ be the nilpotent cone, and $\cal{N}_G^{p^{-\infty}}$ its perfection.
Let $W_G$ be the Weyl group $N_G(T)/T$.

Throughout this paper, $P$ denotes a parabolic subgroup of $G$ containing $B$, and $L$ denotes its Levi subgroup (containing $T$).
We have the Levi decomposition $P=LU_P$ where $U_P$ is the unipotent radical of $P$.
Sometimes $B_L$ denotes $B\cap L$.
The group $B_L$ is a Borel subgroup of $L$ containing $T$.

Let $\mathbb{X}^{\bullet}(T)$ and $\mathbb{X}_{\bullet}(T)$ be the character and cocharacter lattice of $T$.
Let $\mathbb{X}_{\bullet}^+(T)\subset \mathbb{X}_{\bullet}(T)$ be the subset of dominant cocharacters with respect to $B$.

From \S \ref{scn:plan}, we use the 2-categorical notions explained in \cite[\S A]{AHR}, such as ``pasting diagrams'', ``commutative prisms'' and ``commutative cubes''.
In these diagrams, we also use the labels (Co), (BC), (For), (Tr), (FF), (FT) for the corresponding natural morphisms, defined in the same way as \cite[\S B]{AHR}.
\section{Preliminaries on affine Grassmannians and geometric Satake equivalences}\label{scn:preliminaries}
\subsection{Affine Grassmannians}\label{ssc:affgrass}
Let $H$ be a smooth affine group scheme over $\cal{O}$.
We consider the following presheaves on the category of perfect affine $k$-schemes
\begin{align*}
L^+H(R)&=H(\cal{O}_R),\\
L^hH(R)&=H(\cal{O}_{R,h}),\\
LH(R)&=H(\cal{O}_R[1/\varpi]).
\end{align*}
By \cite[Proposition 1.3.2]{Zhueq} or \cite[1.1]{Zhumixed}, $L^+H$ and $L^hH$ are represented by pfp (perfectly of finite presentation) perfect group schemes over $k$, and $LH$ is represented by an ind perfect scheme over $k$.
We also write $L^+H^{(h)}$ for the $h$-th congruence group (i.e. the kernel of the natural map $L^+H\to L^hH$).

The affine Grassmannian of $H$ is the perfect space $\Gr_H$ defined as a quotient sheaf
\[
\Gr_H:=LH/L^+H.
\]
By \cite[Theorem 1.1]{BS}, $\Gr_H$ is representable by the inductive limit of perfections of quasi-projective varieties over $k$, along closed immersions.
If $H$ is a reductive group scheme, then $\Gr_H$ is represented by the inductive limit of perfections of projective varieties over $k$, along closed immersions.
\begin{rmk}\label{rem:nonperfectversion}
If $F=k((t))$, there are non-perfect versions of the above spaces (see \cite{Zhueq}).
We write $\Gr'_H$ for this canonical deperfection of $\Gr_H$.
All the results for equal characteristic in this paper hold for this version by the same arguments.
However, there is no canonical deperfection of $\Gr_G$ in mixed characteristic, see \cite[Remark B.6]{Zhumixed}.
That is why we work with perfect schemes.
\end{rmk}

For $\lambda \in \mathbb{X}_{\bullet}(T)$, let $\varpi^{\lambda}\in \Gr_G(k)$ be the image of $\varpi\in F^{\times} =L\mathbb{G}_m(k)$ under the map
\[
L\mathbb{G}_m \overset{L\lambda}{\to} LT\to LG\to \Gr_G.
\]
For $\mu \in \mathbb{X}^+_{\bullet}(T)$, define $\Gr_{G,\mu}$ as the $L^+G$-orbit of $\varpi^{\mu}$, and let
\[
\Gr_{G,\leq \mu}=\bigcup_{\mu'\leq \mu}\Gr_{G,\mu'}.
\]
The space $\Gr_{G,\leq \mu}$ is a closed subspace of $\Gr_G$, and $\Gr_{G,\mu}$ is a dense open subspace of $\Gr_{G,\leq \mu}$.
We call $\Gr_{G,\leq \mu}$ the Schubert variety corresponding to $\mu$, and $\Gr_{G,\mu}$ the Schubert cell corresponding to $\mu$.

For $\lambda \in \mathbb{X}_{\bullet}(T)$ define $S_{G,\lambda}=S_{\lambda}$ as the $LU$-orbit of $\varpi^{\lambda}$, and let 
\[
S_{\leq \lambda}=\bigcup_{\lambda'\leq \lambda}S_{\lambda'}.
\]
The space $S_{\leq \lambda}$ is a closed subspace of $\Gr_G$, and $S_{\lambda}$ is an open subspace of $S_{\leq \lambda}$.
The space $S_{\lambda}$ is called a semi-infinite orbit; it is the attractor locus of a certain torus action on $\Gr_G$ (see \cite[\S 2.2]{Zhumixed} for details).
The natural map
\[
\Gr_B\to \Gr_G
\]
can be identified with
\[
\coprod_{\lambda \in \bb{X}_{\bullet}(T)} S_{\lambda}\to \Gr_G.
\]
By the Iwasawa decomposition, this map is bijective on points.

Let $\check{\Phi}$ be the set of roots of $(\check{G},\check{T})$.
The Weyl group of $\check{G}$ is identified with the Weyl group of $G$, denoted by $W_G$.
The group $W_G$ acts on $\mathbb{X}_{\bullet}(T)=\mathbb{X}^{\bullet}(\check{T})$.
\begin{defi}
A character $\mu\in \mathbb{X}^{\bullet}(\check{T})$ is said to be small for $\check{G}$ if
\[
\begin{cases}
\mu\in \mathbb{Z}\check{\Phi},\\
\text{convex hull of $W_G\cdot \mu$ does not contain any element of $\{2\check{\alpha}\mid \check{\alpha}\in \check{\Phi}\}.$}
\end{cases}
\]
The closed subspace $\Gr^{\sm}\subset \Gr$ is defined as the union of $\Gr_{\mu}$ for small $\mu\in \mathbb{X}^+_{\bullet}(T)$.
\end{defi}
\subsection{Geometric Satake equivalence}
By \cite{MV} or \cite{Zhumixed}, $\Perv_{L^+G}(\Gr_G,\Qlb)$ has a monoidal structure under the convolution product $\star$, and the functor 
\[
\mathsf{F}_G:=H^*(\Gr_G,-)\colon \Perv_{L^+G}(\Gr_G,\Qlb)\to \rm{Vect}_{\Qlb}
\]
is monoidal.
Put
\[
\check{G}:= \mathrm{Aut}^{\otimes}(\mathsf{F}_G).
\]
Here $\mathrm{Aut}^{\otimes}(\mathsf{F}_G)$ is the Tannakian group of the fiber functor $\mathsf{F}_G$.
Then $\check{G}$ is isomorphic to the Langlands dual group over $\Qlb$ of $G$, and $\mathsf{F}_G$ gives rise to an equivalence of monoidal categories
\[
\scr{S}_G\colon \ \Perv_{L^+G}(\Gr_G,\Qlb)\to \Rep(\check{G},\Qlb).
\]
This is called a geometric Satake equivalence.
More explicitly, the intersection cohomology sheaf $\mathrm{IC}_{\mu}$ on $\Gr_{\leq \mu}$ corresponds to an irreducible representation with highest weight $\mu$.
\section{Partial isomorphism between affine Grassmannians}\label{scn:defofMG}
\subsection{Preliminaries on affine Grassmannian for $\GL_n$}
In this subsection, we will introduce some definitions and fundamental results on the affine Grassmannian for $\GL_n$.

Recall that there is another interpretation of the affine Grassmannian.
Namely, 
\[
\Gr_G(R)=\left\{(\cal{E},\beta)\relmiddle| \begin{array}{l}
\text{$\cal{E}$ is a $G$-torsor on $\Spec \cal{O}_R$,}\\
\text{$\beta\colon \cal{E}|_{\Spec \cal{O}_{R}[1/\varpi]}\to \cal{E}_0|_{\Spec \cal{O}_{R}[1/\varpi]}$ is an isomorphism.},
\end{array}\right\}
\]
where $\cal{E}_0$ is the trivial $G$-torsor on $\Spec \cal{O}_R$.
In particular, if $G=\GL_n$, we have
\begin{align} \label{Grmoduli}
\Gr_G(R)=\left\{(\cal{E},\beta)\relmiddle| \begin{array}{l}
\text{$\cal{E}$ is a projective $\cal{O}_R$-module of rank $n$,}\\
\text{$\beta\colon \cal{E}[1/\varpi]\to \cal{E}_0[1/\varpi]$ is an isomorphism}
\end{array}\right\},
\end{align}
where $\cal{E}_0=\cal{O}_R^n$.

For finite projective $\cal{O}_R$-modules $\cal{E}_1$ and $\cal{E}_2$, an isomorphism $\beta\colon \cal{E}_2[1/\varpi]\to \cal{E}_1[1/\varpi]$ is called a quasi-isogeny.
We write this as $\beta\colon \cal{E}_2\dashrightarrow \cal{E}_1$.
It is called an isogeny if it is induced by a genuine map $\cal{E}_2\to \cal{E}_1$.

Recall that when $G=\GL_n$, we can make the identifications
\begin{align*}
\bb{X}_{\bullet}(T)&=\ZZ^n,\\
\bb{X}_{\bullet}^+(T)&=\{(m_1,\ldots,m_n)\in \ZZ^n\mid m_1\geq \cdots \geq m_n\}
\end{align*}
and the partial order on $\bb{X}_{\bullet}(T)$ can be described as follows:
\begin{align*}
&(m_1,\ldots,m_n)\leq (l_1,\ldots,l_n)\text{ if and only if}\\
&\begin{cases}
m_1+\cdots +m_j\leq l_1+\cdots +l_j&(1\leq j\leq n),\\
m_1+\cdots +m_n= l_1+\cdots +l_n&
\end{cases}
\end{align*}

Let $R$ be a perfect field over $k$.
Then for a quasi-isogeny $\beta\colon \cal{E}_1 \dashrightarrow \cal{E}_2$, there exists a basis $(e_1,\ldots,e_n)$ of $\cal{E}_1$ and a basis $(f_1,\ldots,f_n)$ of $\cal{E}_2$ such that
\[
\beta(e_i)=\varpi^{m_i}f_i
\]
and $(m_1,m_2,\ldots,m_n)\in \bb{X}_{\bullet}^+(T)$.
Then we write
\[
\mathrm{Inv}(\beta)=(m_1,\ldots,m_n)
\]
and call it the relative position of $\beta$.

Let $R$ be a general perfect $k$-algebra, and let $\beta\colon \cal{E}_1 \dashrightarrow \cal{E}_2$ be a quasi-isogeny.
For $x\in \Spec R$, we write $\beta_x:=\beta\otimes_{\cal{O}_R}\cal{O}_{k(x)}$.

Then for $\mu\in \bb{X}_{\bullet}^+(T)$, the Schubert variety and the Schubert cell can be described as
\begin{align*}
\Gr_{G,\leq \mu}&=\{(\cal{E},\beta)\in \Gr_G\mid \mathrm{Inv}(\beta_x)\leq \mu \text{ for all $x\in \Spec R$}\},\\
\Gr_{G,\mu}&=\{(\cal{E},\beta)\in \Gr_G\mid \mathrm{Inv}(\beta_x)= \mu \text{ for all $x\in \Spec R$}\}.
\end{align*}
Let $N$ be a nonnegative integer.
Define the closed subspace $\ol{\Gr}_{G,N}\subset \Gr_G$ by
\begin{align*}
\ol{\Gr}_{G,N}=\Gr_{G,\leq (N,0,\ldots,0)}.
\end{align*}

There is a fundamental result as follows:
\begin{lemm} \label{genuine}
Let $R$ be a perfect $k$-algebra, and let $\beta\colon \cal{E}_1 \dashrightarrow \cal{E}_2$ be a quasi-isogeny.
For $x\in \Spec R$, write
\[
\mathrm{Inv}(\beta_x)=(m_{x1},\ldots,m_{xn}).
\]
Then $\beta$ is an isogeny if and only if
\[
m_{xn}\geq 0 \text{ for any $x\in \Spec R$.}
\]
Moreover, in this case, $\beta$ induces the inclusion
\[
\beta(\cal{E}_1)\subset \cal{E}_2.
\]
\end{lemm}
\begin{proof}
By the proof of \cite[Lemma 1.5]{Zhumixed}, the quasi-isogeny $\beta$ is a genuine map if and only if $\beta_x$ is a genuine map for any $x\in \Spec R$. 
Moreover, in this case, we have $\beta(\cal{E}_1)\subset \cal{E}_2$.
This shows the lemma.
\end{proof}

By Lemma \ref{genuine}, $\ol{\Gr}_{G,N}$ can be also described as
\[
\ol{\Gr}_{G,N}(R)=\left\{\cal{E}\overset{\beta}{\subset} \cal{E}_0\relmiddle| \begin{array}{l}
\text{$\cal{E}$ is a projective $\cal{O}_R$-submodule of rank $n$,}\\
\text{$\mathrm{Inv}(\beta_x)\leq (N,0,\ldots,0)$ for all $x\in \Spec R$.}
\end{array}\right\}.
\]
\subsection{Isomorphism for $G=\GL_n$}
From this section, we will vary $\cal{O}$, so we write $\cal{O}_1,\cal{O}_2,\ldots$ to distinguish them.
We want to prove
\begin{thm} \label{Grbarisom}
Set $G_1:=\GL_n\otimes_{\bb{Z}} \cal{O}_1$ and $G_2:=\GL_n\otimes_{\bb{Z}} \cal{O}_2$.
If $v_{F_1}(p)\geq N$ and $v_{F_2}(p)\geq N$, then there is a canonical isomorphism
\[
\ol{\Gr}_{G_1,N} \cong \ol{\Gr}_{G_2,N}.
\]
\end{thm}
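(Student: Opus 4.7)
The strategy is twofold: first, describe $\ol{\Gr}_{G,N}(R)$ as a moduli functor depending only on the truncated base ring $\cal{O}_{R,N}$; second, show that under the ramification hypothesis this ring is canonically independent of $F$, up to the choice of a uniformizer.

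For the moduli description, observe that any $\cal{E} \overset{\beta}{\subset} \cal{E}_0$ in $\ol{\Gr}_{G,N}(R)$ automatically satisfies $\varpi^N \cal{E}_0 \subset \cal{E}$: since $\beta$ is an honest inclusion, Lemma \ref{genuine} gives $\mathrm{Inv}(\beta_x) = (m_{x,1} \geq \cdots \geq m_{x,n} \geq 0)$ pointwise, and the partial-order bound $\mathrm{Inv}(\beta_x) \leq (N,0,\ldots,0)$ forces $m_{x,1} \leq N$, so $\cal{E}_0/\cal{E}$ is killed by $\varpi^N$. Thus $M := \cal{E}/\varpi^N \cal{E}_0$ is a well-defined $\cal{O}_{R,N}$-submodule of $\cal{O}_{R,N}^n$, and the assignment $\cal{E} \mapsto M$ (inverted by $M \mapsto \pi^{-1}(M)$ under the projection $\pi \colon \cal{O}_R^n \twoheadrightarrow \cal{O}_{R,N}^n$) gives a functorial bijection between $\ol{\Gr}_{G,N}(R)$ and a moduli functor $\mathcal{Q}_{n,N}(\cal{O}_{R,N})$ parametrizing submodules $M \subset \cal{O}_{R,N}^n$ satisfying an intrinsic $\cal{O}_{R,N}$-module condition that encodes both the projectivity of the lifted $\cal{E}$ and the pointwise invariant bound $\leq (N,0,\ldots,0)$.

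Next I would establish a canonical $R$-algebra isomorphism $\cal{O}_{R,N} \cong R[\varpi]/\varpi^N$ (functorial in perfect $k$-algebras $R$, depending only on the chosen uniformizer) under $v_F(p) \geq N$. In equal characteristic this is the definition. In mixed characteristic, write $\cal{O} = W(k)[\varpi]/E(\varpi)$ for an Eisenstein polynomial $E$ of degree $e = v_F(p)$; each non-leading coefficient of $E$ lies in $pW(k) \subset \varpi^e\cal{O} \subset \varpi^N\cal{O}$ and the leading term $\varpi^e$ is in $\varpi^N\cal{O}$, so the relation $E(\varpi) = 0$ becomes trivial modulo $\varpi^N$, and rearranging $E(\varpi) = 0$ further shows $p \equiv 0 \pmod{\varpi^N}$. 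Hence $\cal{O}/\varpi^N \cong k[\varpi]/\varpi^N$ as a $W(k)$-algebra, and combining with $W(R)/p = R$ for perfect $R$ yields
\[
W_{\cal{O},N}(R) \;\cong\; W(R) \otimes_{W(k)} k[\varpi]/\varpi^N \;\cong\; R \otimes_k k[\varpi]/\varpi^N \;\cong\; R[\varpi]/\varpi^N.
\]
Identifying $\varpi_1 \leftrightarrow \varpi_2$ then yields a canonical $R$-algebra isomorphism $\cal{O}_{1,R,N} \cong \cal{O}_{2,R,N}$ functorial in $R$, which transports $\mathcal{Q}_{n,N}$ to itself and hence induces the desired isomorphism $\ol{\Gr}_{G_1,N} \cong \ol{\Gr}_{G_2,N}$.

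The main obstacle is formulating the condition defining $\mathcal{Q}_{n,N}$ precisely enough that the inverse assignment $M \mapsto \pi^{-1}(M)$ is seen to land in $\ol{\Gr}_{G,N}$; equivalently, verifying that projectivity of $\cal{E}$ over $\cal{O}_R$ corresponds to an intrinsic $\cal{O}_{R,N}$-module condition on $M$. Working with perfect $R$ makes this tractable: flatness of $\cal{O}_R^n/\cal{E}$ over $R$ (equivalent to projectivity of $\cal{E}$ over $\cal{O}_R$ given the full-rank condition $\varpi^N \cal{O}_R^n \subset \cal{E}$) reduces to flatness of the truncated quotient $\cal{O}_{R,N}^n/M$ over $R$, which is intrinsic to $\cal{O}_{R,N}$.
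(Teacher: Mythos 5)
Your ring-theoretic step (the identification $\cal{O}_{R,N}\cong R[\varpi]/\varpi^N$ for perfect $R$ under $v_F(p)\geq N$) is exactly the key computation the paper uses in (\ref{mixedeqisom}), and the overall strategy of reducing everything to data over the truncation $\cal{O}_{R,N}$ is the right intuition. But the step you yourself flag as ``the main obstacle'' is not a technicality to be smoothed over later: it is the entire content that the paper's proof is organized around avoiding.

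Concretely, the parenthetical claim ``flatness of $\cal{O}_R^n/\cal{E}$ over $R$ (equivalent to projectivity of $\cal{E}$ over $\cal{O}_R$ given the full-rank condition $\varpi^N\cal{O}_R^n\subset\cal{E}$)'' is asserted, not proved. The forward implication is Lemma \ref{projectivity5} in the paper; the converse is not established anywhere, and it does not follow from the standard $\varpi$-adic criterion (that criterion wants $\cal{E}/\varpi\cal{E}$ flat over $R$, not $\cal{O}_R^n/\cal{E}$, and the natural comparison map $\cal{E}/\varpi\cal{E}\to\cal{O}_R^n/\varpi\cal{O}_R^n$ has kernel $\mathrm{Tor}_1^{\cal{O}_R}(\cal{O}_R^n/\cal{E},R)$, exactly what you are trying to control). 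One can in fact close this gap, but the argument is genuinely substantive: present $Q:=\cal{O}_{R,N}^n/M$ as an $\cal{O}_R$-module using an $R$-basis and Teichm\"uller lifts of the matrix $A$ by which $\varpi$ acts; then use that $R$ is perfect, hence reduced, to conclude $\det(\varpi\mathrm{Id}-[A])=\varpi^{\mathrm{rk}\,Q}$ (Cayley--Hamilton over a reduced ring forces the characteristic polynomial of a nilpotent matrix to be $t^d$), showing the presentation matrix is injective, so $\mathrm{pd}_{\cal{O}_R}(Q)\leq 1$ and $\cal{E}=\pi^{-1}(M)$ is projective by Schanuel. None of this is in your write-up, and you also omit the rank-$N$ condition on $Q$ that makes $\mathrm{Inv}\leq(N,0,\ldots,0)$ automatic given the sandwiching (otherwise $\mathcal{Q}_{n,N}$ is too large).

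By contrast, the paper never attempts to characterize $\ol{\Gr}_N$ as a moduli functor over $\cal{O}_{R,N}$, precisely so as to avoid the lifting-of-projectivity argument. It works with the convolution resolution $\widetilde{\Gr}_\mu$ of Zhu/Bhatt--Scholze (Lemmas \ref{Grtildeisom}--\ref{projectivity3}), where each step $\cal{E}_{i-1}/\cal{E}_i$ is killed by $\varpi$; there the projective-dimension transfer is a one-line bound (Lemma \ref{projectivity2}) and does not require any reducedness trick. It then gets from cells to the whole Schubert variety via a closed immersion $\ol{\Gr}_N\hookrightarrow\Gr(nN)^{p^{-\infty}}$ (Lemmas \ref{projectivity5}--\ref{embusualgrclosed}) and matches closed images using the cell decomposition. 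That is a longer route, but each step is elementary. Your route is conceptually cleaner but, as written, rests on an unproven equivalence at its core; the proposal is therefore incomplete rather than wrong.
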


We only have to consider the case that $\cal{O}_1$ is of mixed characteristic and $\cal{O}_2$ is of equal characteristic.
Namely, we may assume that 
\begin{align}\label{ramiassump}
\begin{cases}
\text{$\cal{O}_1$ is a totally ramified extension of $W(k)$ with $[\cal{O}_1:W(k)]\geq N$,}\\ \text{$\cal{O}_2=k[[t]]$.}
\end{cases}
\end{align}

Then let us denote
\begin{align*}
&\cal{O}:=\cal{O}_1,\\
&\Gr :=\Gr_{G_1}, \Gr^{\flat}:=\Gr_{G_2}
\end{align*}
for simplicity.
Let $\mu\in \bb{X}^+_{\bullet}(T)$ be such that $\mu\leq (N,0,\ldots,0)$.
Then we can write
\[
\mu=\omega_{N_1}+\omega_{N_2}+\cdots +\omega_{N_r},\ (N_1\geq N_2\geq \cdots \geq N_r)
\]
where $\omega_i:=(\overbrace{1,\ldots,1}^{i},0,\ldots,0)\in \ZZ^n$.
Define $\widetilde{\Gr}_{\mu}$ by 
\[
\widetilde{\Gr}_{\mu}(R)=\left\{\cal{E}_r\overset{\beta_r}{\subset}\cdots \overset{\beta_2}{\subset}\cal{E}_1\overset{\beta_1}{\subset} \cal{E}_0\relmiddle| \begin{array}{l}
\text{$\cal{E}_i$'s are projective $W_{\cal{O}}(R)$-modules of rank $n$,}\\
\text{$\beta_i$ is an isogeny of relative position $\omega_{N_i}$.}
\end{array}\right\}.
\]
First, we want to prove the following:
\begin{lemm}\label{Grtildeisom}
Assume (\ref{ramiassump}).
If $\mu \leq (N,0,\ldots,0)=N\omega_1$, then
\[
\widetilde{\Gr}_\mu\cong \widetilde{\Gr}_\mu^\flat.
\]
\end{lemm}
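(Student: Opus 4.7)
The plan is to reduce the moduli to chains of submodules of a truncated free module, and then to identify the truncated rings canonically under the ramification hypothesis. This splits the argument into a characteristic-independent reduction and a single use of (\ref{ramiassump}).

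First, since $\mu \leq N\omega_1$ forces each entry of $\mu = (m_1,\ldots,m_n)$ to satisfy $0 \leq m_i \leq N$, the endpoint of any chain parametrized by $\widetilde{\Gr}_\mu$ satisfies $\varpi^N\cal{E}_0 \subset \cal{E}_r$, so the whole chain sits between $\varpi^N\cal{E}_0$ and $\cal{E}_0$. I would check that reduction modulo $\varpi^N\cal{E}_0$ identifies $\widetilde{\Gr}_\mu(R)$ with the set of chains of $\cal{O}_R/\varpi^N$-submodules $\overline{\cal{E}}_r \subset \cdots \subset \overline{\cal{E}}_1 \subset (\cal{O}_R/\varpi^N)^n$ whose successive quotients $\overline{\cal{E}}_{i-1}/\overline{\cal{E}}_i$ are locally free of rank $N_i$ over $R = \cal{O}_R/\varpi$; the inverse assignment is by taking preimages in $\cal{O}_R^n$. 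The relative-position conditions translate cleanly because each $\cal{E}_{i-1}/\cal{E}_i$ is already killed by $\varpi$ and so coincides with $\overline{\cal{E}}_{i-1}/\overline{\cal{E}}_i$.

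Second, under (\ref{ramiassump}) we have $e := [\cal{O}_1 : W(k)] \geq N$. Writing $\cal{O}_1 = W(k)[\varpi]/E(\varpi)$ with $E$ Eisenstein of degree $e$, we have $E(\varpi) \equiv p\,\widetilde{E}(\varpi)\pmod{\varpi^N}$ where $\widetilde{E}$ has unit constant term (by the Eisenstein condition). Hence $\widetilde{E}(\varpi)$ is a unit in $W(k)[\varpi]/\varpi^N$, so $p\,\widetilde{E}(\varpi) = 0$ forces $p = 0$ and we get $\cal{O}_1/\varpi^N = W(k)[\varpi]/(\varpi^N, p) = k[\varpi]/\varpi^N$. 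Sending $\varpi \mapsto t$ defines a canonical $k$-algebra isomorphism $\cal{O}_1/\varpi^N \cong k[t]/t^N$, and base-changing to any perfect $R$ yields an $R$-algebra isomorphism $W_{\cal{O}_1}(R)/\varpi^N \cong R[[t]]/t^N$ via the identification $W_{\cal{O}_1}(R)/\varpi^N = W(R) \otimes_{W(k)} (\cal{O}_1/\varpi^N) = R \otimes_k k[\varpi]/\varpi^N$.

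Combining the two steps and transporting the truncated chains across this ring isomorphism produces a functorial isomorphism $\widetilde{\Gr}_\mu(R) \cong \widetilde{\Gr}_\mu^\flat(R)$, proving the lemma. The main delicate point is the first step: one has to verify that the preimage $\cal{E}_i$ of $\overline{\cal{E}}_i$ in $\cal{O}_R^n$ is again a projective $\cal{O}_R$-module of rank $n$ and that the successive inclusions have the prescribed relative positions $\omega_{N_i}$, which should follow from the Smith-normal-form structure of the chain locally on $\Spec R$.
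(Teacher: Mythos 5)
Your proposal follows essentially the same strategy as the paper's: reduce to chains of submodules of the truncated module $(\cal{O}_R/\varpi^N)^n$ (this is the paper's ${}'\widetilde{\Gr}_\mu$, justified via Lemmas \ref{projectivity1}--\ref{projectivity3}), and then identify the two truncated base rings. The only variation is in how the ring isomorphism $W_{\cal{O}_1}(R)/\varpi^N \cong R[[t]]/t^N$ is obtained: you go through the presentation $\cal{O}_1 = W(k)[\varpi]/E(\varpi)$, show $p=0$ in $\cal{O}_1/\varpi^N$ using that the Eisenstein polynomial has unit cofactor modulo $\varpi^N$, and base change the resulting $k$-algebra isomorphism; the paper instead proves directly that the Teichmüller map $a\mapsto[a]$ becomes additive modulo $\varpi^N$ (because $[a+b]-[a]-[b]\in pW_{\cal{O}}(R)\subset \varpi^N W_{\cal{O}}(R)$) and builds the isomorphism from the $\varpi$-adic expansion $\sum[a_k]\varpi^k\mapsto\sum a_kt^k$. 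Both constructions produce the same map, so this is a presentational difference, not a different route.
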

To prove this lemma, we need some preparation.
\begin{lemm}\label{projectivity1}
If $(\cal{E}_{\bullet},\beta_{\bullet})\in \widetilde{\Gr}_{\mu}(R)$, then $\beta_i$ induces a chain of inclusions $\varpi\cal{E}_{i-1}\subset \cal{E}_i\subset \cal{E}_{i-1}$, and $\cal{E}_{i-1}/\cal{E}_i$ is a projective $R$-module of rank $N_i$.

In particular, $\varpi^N\cal{E}_0\subset \cal{E}_i\subset \cal{E}_0$ for all $i$.
\end{lemm}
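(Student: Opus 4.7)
The plan breaks into three parts: establishing the inclusions $\varpi\cal{E}_{i-1}\subset \cal{E}_i\subset \cal{E}_{i-1}$, verifying the projectivity of the quotient, and chaining to the final bound.

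First, for $\cal{E}_i\subset \cal{E}_{i-1}$: since $\beta_i$ is an isogeny, it is induced by a genuine morphism of $\cal{O}_R$-modules, which is automatically injective (being an isomorphism after inverting $\varpi$, and $\cal{E}_i$ being $\varpi$-torsion free as a projective $\cal{O}_R$-module). For the opposite inclusion $\varpi\cal{E}_{i-1}\subset \cal{E}_i$, the key idea is to consider the ``dual'' quasi-isogeny $\gamma_i:=\varpi\beta_i^{-1}\colon \cal{E}_{i-1}\dashrightarrow \cal{E}_i$. At each geometric point $x\in\Spec R$, the normal form for $\beta_i$ of relative position $\omega_{N_i}$ yields $\beta_i(e_j)=\varpi^{(\omega_{N_i})_j}f_j$, so $\gamma_i(f_j)=\varpi^{1-(\omega_{N_i})_j}e_j$, and hence $\mathrm{Inv}((\gamma_i)_x)=\omega_{n-N_i}$, which has all non-negative entries. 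By Lemma \ref{genuine}, $\gamma_i$ is a genuine morphism $\cal{E}_{i-1}\to \cal{E}_i$; composing with $\beta_i$ recovers multiplication by $\varpi$ on $\cal{E}_{i-1}$, so $\varpi\cal{E}_{i-1}\subset \cal{E}_i$.

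Next, let $Q_i:=\cal{E}_{i-1}/\cal{E}_i$. By the previous step $Q_i$ is annihilated by $\varpi$, hence is naturally an $R$-module since $\cal{O}_R/\varpi\cong R$ in both the mixed and equal characteristic cases. It is finitely presented as the cokernel of a map between finite projective $\cal{O}_R$-modules. From the pointwise normal form one reads off $\dim_{k(x)}(Q_i\otimes_R k(x))=N_i$ for every $x\in\Spec R$. A finitely presented $R$-module with locally constant fiber rank is locally free (in the Zariski topology), hence $Q_i$ is projective of rank $N_i$. A direct alternative is to build a Zariski-local splitting of $0\to \cal{E}_i/\varpi\cal{E}_{i-1}\to \cal{E}_{i-1}/\varpi\cal{E}_{i-1}\to Q_i\to 0$ using the reduced maps $\bar{\beta}_i$ and $\bar{\gamma}_i$, which satisfy $\bar{\beta}_i\bar{\gamma}_i=0=\bar{\gamma}_i\bar{\beta}_i$ and form a complex that is pointwise exact.

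Finally, iterating $\varpi\cal{E}_{j-1}\subset \cal{E}_j$ gives $\varpi^i\cal{E}_0\subset \cal{E}_i$ for all $i\leq r$. Since $\mu\leq N\omega_1$, the first coordinate $\mu_1$ is $\leq N$, and $\mu_1$ equals the number $r$ of nonzero terms in $\mu=\sum_{j=1}^r \omega_{N_j}$ (we may assume each $N_j\geq 1$); hence $r\leq N$ and $\varpi^N\cal{E}_0\subset \varpi^i\cal{E}_0\subset \cal{E}_i\subset \cal{E}_0$.

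The main technical point is the projectivity of $Q_i$ in the second paragraph; the remaining steps are formal given Lemma \ref{genuine} and the pointwise structure of a Schubert cell. Here constancy of fiber rank combined with finite presentation does the job by a standard Nakayama argument, but it can also be seen explicitly from the Smith-normal-form description of $\beta_i$ that is valid at each geometric point.
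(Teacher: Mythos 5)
Your proof is essentially the argument the paper has in mind (it simply defers to \cite[Lemma 1.5]{Zhumixed}), and the three-part structure is sound: using Lemma~\ref{genuine} on both $\beta_i$ and $\gamma_i=\varpi\beta_i^{-1}$ to get $\varpi\cal{E}_{i-1}\subset\cal{E}_i\subset\cal{E}_{i-1}$, reading off the fiber dimensions from the Smith normal form, and then chaining inclusions with $r\leq N$.

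There is one imprecision that should be flagged: your main argument for projectivity of $Q_i=\cal{E}_{i-1}/\cal{E}_i$ asserts that ``a finitely presented $R$-module with locally constant fiber rank is locally free.'' That statement is false in general (e.g.\ $R=k[\epsilon]/\epsilon^2$, $M=R/\epsilon R$). What makes it work here is that $R$ is a \emph{perfect} $k$-algebra, and perfect rings are reduced; over a reduced ring the claim is \cite[Tag 0FWG]{St} (this is exactly the form used in the paper's proof of Lemma~\ref{projectivity5}). You should insert the reducedness hypothesis explicitly. Also note that finite presentation of $Q_i$ over $R$ (rather than $\cal{O}_R$) needs the observation you allude to: base change the presentation $\cal{E}_i\to\cal{E}_{i-1}\to Q_i\to 0$ along $\cal{O}_R\twoheadrightarrow\cal{O}_R/\varpi\cong R$, which is legitimate since $Q_i$ is killed by $\varpi$ and the $\cal{E}_j/\varpi\cal{E}_j$ are finite projective over $R$. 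Your alternative ``local splitting'' sketch via $\bar\beta_i,\bar\gamma_i$ is plausible as an idea but is not developed far enough to stand on its own; with the reducedness clause added, the primary argument is the cleaner route.

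Finally, a small point you assume tacitly in Part 3: you need each $N_j\geq 1$ so that $\mu_1=r$. That is fine (drop trivial factors $\omega_0$), but it is worth saying in one clause so the identity $\mu_1=r$ is not mysterious.
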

\begin{proof}
One can use the same argument as in \cite[Lemma 1.5]{Zhumixed}.
\end{proof}
\begin{lemm}\label{projectivity2}
Consider the sequence of $W_{\cal{O}}(R)$-submodules
\[
\varpi^n\cal{E}_0\subset \cal{E}_n\subset \cal{E}_{n-1} \subset \cdots \subset \cal{E}_1 \subset \cal{E}_0.
\]
Assume that $\cal{E}_{i-1}/\cal{E}_i$ is a projective $R$-module annihilated by $\varpi$ for any $i$.
Then $\cal{E}_i$ is a finite projective $W_{\cal{O}}(R)$-module for any $i$.
\end{lemm}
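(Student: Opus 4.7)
The plan is to induct on $i$, the base $\cal{E}_0$ being a free (hence finite projective) $W_{\cal{O}}(R)$-module of rank $n$ by construction. For the inductive step, I assume $\cal{E}_{i-1}$ is finite projective of rank $n$ and deduce the same for $\cal{E}_i$.

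Since $\varpi$ annihilates $\cal{E}_{i-1}/\cal{E}_i$, one has $\varpi\cal{E}_{i-1}\subset \cal{E}_i\subset \cal{E}_{i-1}$. Reducing modulo $\varpi$ gives the short exact sequence of $R$-modules
\[
0\to \cal{E}_i/\varpi\cal{E}_{i-1}\to \cal{E}_{i-1}/\varpi\cal{E}_{i-1}\to \cal{E}_{i-1}/\cal{E}_i\to 0,
\]
which splits because the right-hand term is $R$-projective. In particular $\cal{E}_i/\varpi\cal{E}_{i-1}$ is a projective $R$-module and $\cal{E}_{i-1}/\varpi\cal{E}_{i-1}$ decomposes as the direct sum of these two projective summands.

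I then work Zariski-locally on $\Spec R$. After such a localization, both summands become free over $R$, so I pick an adapted basis $\bar e_1,\ldots,\bar e_n$ of $\cal{E}_{i-1}/\varpi\cal{E}_{i-1}$ with $\bar e_1,\ldots,\bar e_r$ spanning $\cal{E}_i/\varpi\cal{E}_{i-1}$ and $\bar e_{r+1},\ldots,\bar e_n$ projecting to a basis of $\cal{E}_{i-1}/\cal{E}_i$. Using $\varpi$-adic completeness of $W_{\cal{O}}(R)$ together with Nakayama applied to the finite projective module $\cal{E}_{i-1}$, these lift, after possibly a further Zariski shrinking of $\Spec R$, to a $W_{\cal{O}}(R)$-basis $e_1,\ldots,e_n$ of $\cal{E}_{i-1}$. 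A direct verification, relying only on the injectivity of multiplication by $\varpi$ on $\cal{E}_{i-1}$, then shows that $e_1,\ldots,e_r,\varpi e_{r+1},\ldots,\varpi e_n$ generate $\cal{E}_i$ and are $W_{\cal{O}}(R)$-linearly independent, hence form a basis of $\cal{E}_i$.

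This exhibits $\cal{E}_i$, Zariski-locally on $\Spec R$, as a free $W_{\cal{O}}(R)$-module of rank $n$, from which I conclude that $\cal{E}_i$ is finite projective over $W_{\cal{O}}(R)$. The main obstacle is precisely this last step: $\Spec R\hookrightarrow \Spec W_{\cal{O}}(R)$ is a closed immersion rather than an open cover, so passing from local freeness indexed by $\Spec R$ to a genuine finite projectivity statement over $W_{\cal{O}}(R)$ requires invoking the $\varpi$-adic completeness of $W_{\cal{O}}(R)$ (equivalently, the local criterion of flatness applied to the finitely generated, $\varpi$-torsion free module $\cal{E}_i$ whose reduction modulo $\varpi$ is $R$-projective). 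This step is uniform in the equal- and mixed-characteristic cases and is the only genuinely technical point; the preceding adapted-basis construction is straightforward linear algebra.
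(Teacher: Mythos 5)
You begin exactly where the paper does: the exact sequence
\[
0\to \cal{E}_i/\varpi\cal{E}_{i-1}\to \cal{E}_{i-1}/\varpi\cal{E}_{i-1}\to \cal{E}_{i-1}/\cal{E}_i\to 0
\]
of $R$-modules splits because its right-hand term is $R$-projective. After that, however, your route diverges, and the divergent part has a genuine gap at the step you yourself flag as ``the only genuinely technical point.'' The two fixes you propose for that step are not equivalent to one another, and neither one actually closes the gap. (i)~Invoking $\varpi$-adic completeness does not make local freeness over the rings $W_{\cal{O}}(R_f)$ glue into projectivity over $W_{\cal{O}}(R)$: for perfect $R$, $W_{\cal{O}}(R_f)$ is a $\varpi$-adic \emph{completion} of a localization of $W_{\cal{O}}(R)$, not a Zariski localization of $W_{\cal{O}}(R)$, so your covering is not a Zariski covering of $\Spec W_{\cal{O}}(R)$ and the descent you need is a nontrivial $\varpi$-adic one. (ii)~The ``local criterion of flatness for the finitely generated, $\varpi$-torsion-free module $\cal{E}_i$'' would at best yield that $\cal{E}_i$ is flat over $W_{\cal{O}}(R)$. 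Since $W_{\cal{O}}(R)$ is not Noetherian, finitely generated flat does not imply finite projective without finite presentation, and finite presentation of $\cal{E}_i$ is exactly what your argument never establishes.

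The paper finishes homologically, using nothing beyond the splitting you already produced, and avoids localization, completeness, and finite-presentation worries entirely. Because $\cal{E}_{i-1}/\cal{E}_i$ and $\cal{E}_i/\varpi\cal{E}_{i-1}$ are annihilated by $\varpi$, the $R$-linear splitting above is automatically $W_{\cal{O}}(R)$-linear, so $\cal{E}_{i-1}/\cal{E}_i$ is a direct summand of $\cal{E}_{i-1}/\varpi\cal{E}_{i-1}$ as a $W_{\cal{O}}(R)$-module. By the inductive hypothesis $\cal{E}_{i-1}$ is finite projective, so $\cal{E}_{i-1}/\varpi\cal{E}_{i-1}$ is finitely presented and, via $0\to\cal{E}_{i-1}\overset{\varpi}{\to}\cal{E}_{i-1}\to\cal{E}_{i-1}/\varpi\cal{E}_{i-1}\to 0$, has projective dimension $1$ over $W_{\cal{O}}(R)$. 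A direct summand of a finitely presented module is finitely presented and has projective dimension no larger than the whole, so $\cal{E}_{i-1}/\cal{E}_i$ is finitely presented with $\mathrm{pd}_{W_{\cal{O}}(R)}\leq 1$; Schanuel's lemma applied to $0\to\cal{E}_i\to\cal{E}_{i-1}\to\cal{E}_{i-1}/\cal{E}_i\to 0$ then shows $\cal{E}_i$ is finite projective. I would replace your step~7 (and the adapted-basis detour feeding into it) with this closing move.
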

\begin{proof}
Induction on $i$.
If $i=0$, it is clear from the definition.
Assume $i>0$.
Since $\cal{E}_{i-1}/\cal{E}_i$ is $R$-projective, it is a direct summand of $\cal{E}_{i-1}/\varpi\cal{E}_{i-1}$ as an $R$-module, hence also as a $W_{\cal{O}}(R)$-module.

By the  induction hypothesis, $\cal{E}_{i-1}$ is a finite projective $W_{\cal{O}}(R)$-module.
Therefore $\cal{E}_{i-1}/\varpi\cal{E}_{i-1}$ is finitely presented, and so is $\cal{E}_{i-1}/\cal{E}_i$.
Furthermore, it follows that
\begin{align*}
\mathrm{pd}_{W_{\cal{O}}(R)}(\cal{E}_{i-1}/\cal{E}_i)\leq \mathrm{pd}_{W_{\cal{O}}(R)}(\cal{E}_{i-1}/\varpi\cal{E}_{i-1})=1
\end{align*}
where $\mathrm{pd}_{W_{\cal{O}}(R)}(-)$ means its projective dimension over $W_{\cal{O}}(R)$.
This implies that $\cal{E}_i$ is a finite projective $W_{\cal{O}}(R)$-module.
\end{proof}
\begin{lemm} \label{projectivity3}
In the situation of Lemma \ref{projectivity2}, $\mathrm{Inv}(\cal{E}_i\to \cal{E}_{i-1})=\omega_{N_i}$ if and only if the projective module $\cal{E}_{i-1}/\cal{E}_i$ has the constant rank $N_i$.
\end{lemm}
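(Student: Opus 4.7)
The plan is to reduce the statement to a pointwise claim at each $x \in \Spec R$ and then invoke the structure theorem for finitely generated modules over the DVR $\cal{O}_{k(x)}$.

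First, I would recall that by definition of relative position, $\mathrm{Inv}(\cal{E}_i\to \cal{E}_{i-1})=\omega_{N_i}$ means that for every point $x\in\Spec R$, after passing to $\cal{O}_{k(x)}$, the inclusion $\cal{E}_{i,x}\subset \cal{E}_{i-1,x}$ admits bases in which it is represented by the diagonal matrix $\mathrm{diag}(\varpi^{a_1},\ldots,\varpi^{a_n})$ with $(a_1,\ldots,a_n)=\omega_{N_i}=(1,\ldots,1,0,\ldots,0)$ ($N_i$ ones). Similarly, $\cal{E}_{i-1}/\cal{E}_i$ being $R$-projective of constant rank $N_i$ is equivalent, by flatness and the structure of finitely presented projective modules, to the $k(x)$-vector space $(\cal{E}_{i-1}/\cal{E}_i)\otimes_R k(x)$ having dimension $N_i$ for every $x\in\Spec R$.

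Next I would fix a point $x$ and apply the elementary divisor theorem over the DVR $\cal{O}_{k(x)}$: there exist bases in which $\cal{E}_{i,x}\subset \cal{E}_{i-1,x}$ is $\mathrm{diag}(\varpi^{a_1},\ldots,\varpi^{a_n})$ with $a_1\geq \cdots \geq a_n\geq 0$. The assumption of Lemma \ref{projectivity2} forces $\varpi\cal{E}_{i-1}\subset \cal{E}_i$, hence pointwise $\varpi\cal{E}_{i-1,x}\subset \cal{E}_{i,x}$, which translates into $a_j\leq 1$ for every $j$. Thus each $a_j\in\{0,1\}$, so the relative position at $x$ is automatically of the form $\omega_{m_x}$ where $m_x:=\#\{j\mid a_j=1\}$. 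Moreover the quotient satisfies
\[
\cal{E}_{i-1,x}/\cal{E}_{i,x}\cong \bigoplus_{j=1}^n \cal{O}_{k(x)}/\varpi^{a_j}\cong k(x)^{m_x},
\]
so $\dim_{k(x)}(\cal{E}_{i-1}/\cal{E}_i)\otimes_R k(x)=m_x$.

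The lemma now follows: the condition $\mathrm{Inv}(\cal{E}_i\to \cal{E}_{i-1})=\omega_{N_i}$ is equivalent to $m_x=N_i$ at every $x$, which in turn is equivalent to $\cal{E}_{i-1}/\cal{E}_i$ having constant rank $N_i$ over $R$. I do not anticipate a genuine obstacle here; the only subtlety is the careful bookkeeping to ensure that the elementary divisors being $0$ or $1$ (which uses $\varpi\cal{E}_{i-1}\subset\cal{E}_i$) lets one read off the rank of the quotient directly, but this is immediate from the Smith normal form calculation above.
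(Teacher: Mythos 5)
Your argument is correct and essentially reproduces the paper's proof: both reduce to the fibers at each $x\in\Spec R$, using the identification $(\cal{E}_{i-1}/\cal{E}_i)\otimes_R k(x)\cong\cal{E}_{i-1,x}/\cal{E}_{i,x}$ (which the paper justifies by referring to the proof of \cite[Lemma 1.5]{Zhumixed}), and exploit the fact that the quotient is annihilated by $\varpi$. The only cosmetic difference is that you phrase the pointwise step via Smith normal form with all elementary-divisor exponents in $\{0,1\}$, while the paper directly observes that a $\varpi$-torsion cokernel of dimension $N_i$ over $W_{\cal{O}}(k(x))$ must be $(W_{\cal{O}}(k(x))/\varpi)^{N_i}$.
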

\begin{proof}
Let $\beta$ denote the map $\cal{E}_i\to \cal{E}_{i-1}$.
If $\mathrm{Inv}(\beta)=\omega_{N_i}$, then the projective module $\cal{E}_{i-1}/\cal{E}_i$ has the constant rank $N_i$ by Lemma \ref{projectivity1}.

Conversely, suppose that $\cal{E}_{i-1}/\cal{E}_i$ has the constant rank $N_i$.
As in the proof of \cite[Lemma 1.5]{Zhumixed}, one can show
\begin{align*}
(\cal{E}_{i-1}/\cal{E}_i)\otimes_R k(x)\cong  \mathrm{Coker}(\beta\otimes_{W_{\cal{O}}(R)}W_{\cal{O}}(k(x)))
\end{align*}
for all $x\in \Spec R$.
Since $\cal{E}_{i-1}/\cal{E}_i$ is annihilated by $\varpi$, and $\dim_{k(x)}((\cal{E}_{i-1}/\cal{E}_i)\otimes_R k(x))=N_i$, 
we obtain
\[
\mathrm{Coker}(\beta\otimes_{W_{\cal{O}}(R)}W_{\cal{O}}(k(x)))\cong (W_{\cal{O}}(k(x))/\varpi)^{N_i}
\]
as $W_{\cal{O}}(k(x))$-module.
It means $\mathrm{Inv}(\beta\otimes_{W_{\cal{O}}(R)}W_{\cal{O}}(k(x)))=\omega_{N_i}$ for all $x\in \Spec R$, that is, $\mathrm{Inv}(\beta)=\omega_{N_i}$.
\end{proof}
Now we can prove Lemma \ref{Grtildeisom}.
\begin{proof}[Proof of Lemma \ref{Grtildeisom}]
Define a functor $'\widetilde{\Gr}_{\mu}$ by 
\begin{align*}
'\widetilde{\Gr}_{\mu}(R)\cong \left\{\overline{\cal{E}}_r\subset\cdots \subset \overline{\cal{E}}_1\subset \ol{\cal{E}}_0\relmiddle| \begin{array}{l}
\text{$\overline{\cal{E}}_i$'s are $W_{\cal{O}}(R)/\varpi^N$-submodules of $\ol{\cal{E}}_0$,}\\
\text{$\ol{\cal{E}}_{i-1}/\ol{\cal{E}}_i$ is annihilated by $\varpi$,}\\
\text{and $\ol{\cal{E}}_{i-1}/\ol{\cal{E}}_i$ is a finite projective $R$-module}\\
\end{array}\right\}
\end{align*}
for a perfect $k$-algebra $R$, where $\ol{\cal{E}}_0:=(W_{\cal{O}}/\varpi^N)^n$.
By Lemma \ref{projectivity1}, \ref{projectivity2}, \ref{projectivity3}, we obtain the following bijection:
\begin{align*}
\widetilde{\Gr}_{\mu}(R)&\to {}'\widetilde{\Gr}_{\mu}(R),\\
(\cal{E}_i)_{i=1}^{n}&\mapsto (\cal{E}_i/\varpi^N\cal{E}_0)_{i=1}^{n}.
\end{align*}
This is a natural isomorphism, so $\widetilde{\Gr}_{\mu}\cong {}'\widetilde{\Gr}_{\mu}$ follows.
Similarly, define a functor $'\widetilde{\Gr}^{\flat}_{\mu}$ by 
\begin{align*}
'\widetilde{\Gr}^{\flat}_{\mu}(R)\cong \left\{\overline{\cal{E}}^{\flat}_r\subset\cdots \subset \overline{\cal{E}}^{\flat}_1\subset \ol{\cal{E}}^{\flat}_0\relmiddle| \begin{array}{l}
\text{$\overline{\cal{E}}^{\flat}_i$'s are $R[[t]]/t^N$-submodules of $\ol{\cal{E}}^{\flat}_0$,}\\
\text{$\ol{\cal{E}}^{\flat}_{i-1}/\ol{\cal{E}}^{\flat}_i$ is annihilated by $t$,}\\
\text{and $\ol{\cal{E}}^{\flat}_{i-1}/\ol{\cal{E}}^{\flat}_i$ is a finite projective $R$-module.}\\
\end{array}\right\}
\end{align*}
where $\ol{\cal{E}}^{\flat}_0:=(k[[t]]/t^N)^n$.
Then we have an isomorphism $\widetilde{\Gr}^{\flat}_{\mu}\cong {}'\widetilde{\Gr}^{\flat}_{\mu}$ defined by 
\begin{align*}
\widetilde{\Gr}^{\flat}_{\mu}(R)&\to {}'\widetilde{\Gr}^{\flat}_{\mu}(R),\\
(\cal{E}_i)_{i=1}^{n}&\mapsto (\cal{E}_i/t^N\cal{E}_0)_{i=1}^{n}.
\end{align*}
For $a,b\in R$, the value $[a+b]-[a]-[b]\in pW_{\cal{O}}(R)$ vanishes in $W_{\cal{O}}(R)/\varpi^N$, since $[\cal{O}:W(k)]\geq N$.
Therefore, we obtain a ring isomorphism
\begin{align}\label{mixedeqisom}
W_{\cal{O}}(R)/\varpi^N&\cong R[[t]]/t^N,\\
\sum_{k=0}^{N-1} [a_k]\varpi^k&\mapsto \sum_{k=0}^{N-1} a_kt^k.\notag
\end{align}
Through this isomorphism, we have
\[
'\widetilde{\Gr}_{\mu}\cong {}'\widetilde{\Gr}^{\flat}_{\mu}.
\]
It implies
\[
\widetilde{\Gr}_{\mu} \cong \widetilde{\Gr}^{\flat}_{\mu}.
\]
\end{proof}
\begin{lemm}
There is an isomorphism
\begin{align}\label{Grmuisom}
\Gr_{\mu} \cong \Gr^{\flat}_{\mu}.
\end{align}
\end{lemm}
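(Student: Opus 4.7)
The plan is to exhibit $\Gr_\mu$ as an open locus inside the convolution space $\widetilde{\Gr}_\mu$ of the previous lemma, and similarly for $\Gr^\flat_\mu$ inside $\widetilde{\Gr}^\flat_\mu$, in such a way that the isomorphism of Lemma~\ref{Grtildeisom} restricts to the desired isomorphism.

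First I would introduce the convolution (or ``composition'') map
\[
\pi\colon \widetilde{\Gr}_\mu\to \Gr,\qquad (\cal E_r\overset{\beta_r}{\subset}\cdots\overset{\beta_1}{\subset}\cal E_0)\longmapsto (\cal E_r\overset{\beta_1\circ\cdots\circ\beta_r}{\hookrightarrow}\cal E_0).
\]
Standard estimates on relative positions under composition show that $\pi$ factors through $\Gr_{\leq \mu}$. Let $\widetilde{\Gr}^\circ_\mu:=\pi^{-1}(\Gr_\mu)$, an open subspace, and define $\widetilde{\Gr}^{\flat,\circ}_\mu$ analogously on the equal characteristic side.

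The main step is to show that $\pi$ restricts to an isomorphism $\widetilde{\Gr}^\circ_\mu\xrightarrow{\sim}\Gr_\mu$, with inverse given by the canonical section $(\cal E\subset \cal E_0)\mapsto (\cal E_i)_{i=0}^r$ where $\cal E_i:=\varpi^{i}\cal E_0+\cal E$. Note that $m_1=r$ here, so $\varpi^r\cal E_0\subset \cal E=\cal E_r$, and hence the chain has the correct endpoints. One must verify: (a) each $\cal E_i$ is a finite projective $\cal O_R$-module of rank $n$; (b) $\cal E_{i-1}/\cal E_i$ is $R$-projective of rank $N_i$ and annihilated by $\varpi$; and (c) the section is inverse to $\pi$ over the open cell. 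The pointwise uniqueness (over a perfect field) is forced by the elementary divisor decomposition of $M=\cal E_0/\cal E$: any filtration with successive quotients annihilated by $\varpi$ of the prescribed ranks $N_i$ must coincide with $\bar{\cal E}_i=\varpi^i M$, since $\varpi\bar{\cal E}_{i-1}\subset \bar{\cal E}_i$ inductively forces $\varpi^i M\subset \bar{\cal E}_i$, and ranks then force equality. The globalization to general perfect $R$ proceeds exactly as in Lemmas~\ref{projectivity1}--\ref{projectivity3}, using that on the open locus $\Gr_\mu$ the fiberwise structure of $M$ is constant, so $\varpi^i M\subset M$ is a well-behaved submodule giving a projective $\cal O_R$-module $\cal E_i$ in the short exact sequence $0\to \cal E\to \cal E_i\to \varpi^i M\to 0$.

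Finally, I would transport through Lemma~\ref{Grtildeisom}. The isomorphism $\widetilde{\Gr}_\mu\cong \widetilde{\Gr}^\flat_\mu$ is induced by the ring isomorphism $W_{\cal O}(R)/\varpi^N\cong R[[t]]/t^N$ of (\ref{mixedeqisom}) sending $\varpi\mapsto t$; both the composition map $\pi$ and the section $\cal E\mapsto (\varpi^i\cal E_0+\cal E)$ are expressed purely in terms of the module structure and the uniformizer, hence correspond under this isomorphism. Therefore the isomorphism restricts to $\widetilde{\Gr}^\circ_\mu\cong \widetilde{\Gr}^{\flat,\circ}_\mu$, and composing with $\pi$ and $\pi^\flat$ yields $\Gr_\mu\cong \Gr^\flat_\mu$. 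The main obstacle is step~(c) above: globalizing the uniqueness of the canonical filtration from points to arbitrary perfect $R$-families, which amounts to verifying that $\varpi^i M$ is $R$-projective of the expected rank and that any other filtration must coincide with it.
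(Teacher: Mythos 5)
Your overall strategy coincides with the paper's: form the convolution space $\widetilde{\Gr}_\mu$, use the natural map $\pi$ to $\Gr_{\leq\mu}$, identify $\Gr_\mu$ with the open locus $\pi^{-1}\Gr_\mu$, and transport this through the isomorphism of Lemma~\ref{Grtildeisom}. The paper cites \cite[Lemma~7.13]{BS} for the fact that $\pi$ restricts to an isomorphism over the Schubert cell and then pins down the open locus $\pi^{-1}\Gr_\mu$ set-theoretically by the elementary-divisor condition $\cal{E}_0/\cal{E}_r\cong\bigoplus_i W_{\cal{O}}(k)/\varpi^{\mu_i}$ at $k$-points, which is visibly preserved by the ring isomorphism $W_{\cal{O}}(R)/\varpi^N\cong R[[t]]/t^N$. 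You instead try to reprove the \cite{BS} fact by exhibiting an explicit inverse section $\cal{E}\mapsto(\varpi^i\cal{E}_0+\cal{E})_i$; the pointwise uniqueness argument you sketch is correct (and the ranks match because $N_i=\#\{j:\mu_j\geq i\}$ so $|M/\varpi^iM|=\sum_{l\leq i}N_l$), but, as you honestly flag, the globalization in step~(c) is left open. That gap can be filled: over the open cell, $M/\varpi^iM$ is a finitely presented $R$-module (quotient of the projective $M$) with constant fiber rank over the reduced base $R$, hence locally free, and therefore $\varpi^iM=\ker(M\to M/\varpi^iM)$ and $\cal{E}_i$ are finite projective; but the cleaner route is simply to cite the already-established fact about $\pi$ as the paper does, after which the set-theoretic comparison of the two open loci is immediate. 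In short: correct approach, but you have done unnecessary extra work and left a hole in it that the citation would sidestep.
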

\begin{proof}
By \cite[Lemma 7.13]{BS}, the natural map
\begin{align*}
\pi\colon \widetilde{\Gr}_{\mu}&\to \Gr_{\leq \mu},\\
(\cal{E}_r\subset \cdots \subset \cal{E}_1\subset \cal{E}_0)&\mapsto (\cal{E}_r\subset \cal{E}_0)
\end{align*}
restricts to an isomorphism
\[
\pi^{-1}\Gr_{\mu}\overset{\sim}{\to} \Gr_{\mu}.
\]
An element $(\cal{E}_{\bullet})\in \widetilde{\Gr}_{\mu}(k)$ is an element of $\pi^{-1}\Gr_{\mu}(k)$ if and only if
\[
\cal{E}_0/\cal{E}_r\cong \bigoplus_{i=1}^n W_{\cal{O}}(k)/\varpi^{\mu_i}
\]
where $\mu=(\mu_1,\ldots,\mu_n)$.
Similarly, the natural map
\begin{align*}
\pi^{\flat}\colon \widetilde{\Gr}^{\flat}_{\mu}&\to \Gr^{\flat}_{\leq \mu},\\
(\cal{E}^{\flat}_r\subset \cdots \subset \cal{E}^{\flat}_1\subset \cal{E}^{\flat}_0)&\mapsto (\cal{E}^{\flat}_r\subset \cal{E}^{\flat}_0)
\end{align*}
restricts to an isomorphism
\[
(\pi^{\flat})^{-1}\Gr^{\flat}_{\mu}\to \Gr^{\flat}_{\mu}.
\]
An element $(\cal{E}^{\flat}_{\bullet})\in \widetilde{\Gr}^{\flat}_{\mu}(k)$ is an element of $\pi^{-1}\Gr^{\flat}_{\mu}(k)$ if and only if
\[
\cal{E}^{\flat}_0/\cal{E}^{\flat}_r\cong \bigoplus_{i=1}^n k[[t]]/t^{\mu_i}.
\]
Therefore, the open subspaces $\pi^{-1}\Gr_{\mu}\subset \widetilde{\Gr}_{\mu}$ and $\pi^{-1}\Gr^{\flat}_{\mu}\subset \widetilde{\Gr}^{\flat}_{\mu}$ correspond to each other by the isomorphism in Lemma \ref{Grtildeisom}.
It implies $\Gr_{\mu} \cong \Gr^{\flat}_{\mu}$.
\end{proof}
If $\beta\colon \cal{E}_2\dashrightarrow \cal{E}_1$ is a quasi-isogeny satisfying $\mathrm{Inv}(\beta_x)\leq N\omega_1$ for all $x\in \Spec R$, then we have 
\begin{align}\label{eqn:E1E2incl}
\varpi^N\cal{E}_1\subset \beta(\cal{E}_2)\subset \cal{E}_1
\end{align}
by applying Lemma \ref{genuine} to $\beta$ and $\dfrac{1}{\varpi^N}\beta^{-1}$.
Hence if $(\cal{E}_1\subset \cal{E}_0)\in \ol{\Gr}_N$, then $\cal{E}_0/\cal{E}_1$ is a $W_{\cal{O}}(R)/\varpi^N$-module.
By the isomorphism in (\ref{mixedeqisom}), $\cal{E}_0/\cal{E}_1$ is also an $R[[t]]/t^N$-module, in particular an $R$-module.
\begin{lemm}\label{projectivity5}
If $(\cal{E}_1\subset \cal{E}_0)\in \ol{\Gr}_N(R)$, then $\cal{E}_0/\cal{E}_1$ is a finite projective $R$-module.
\end{lemm}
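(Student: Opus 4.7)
The plan is to verify projectivity by the fibrewise criterion for finitely presented modules. First I would observe that, by the inclusions in (\ref{eqn:E1E2incl}), the module $M := \cal{E}_0/\cal{E}_1$ is annihilated by $\varpi^N$ and hence is a finitely presented module over $W_{\cal{O}}(R)/\varpi^N$. Via the isomorphism (\ref{mixedeqisom}), this ring is identified with $R[[t]]/t^N$, which is a finite free $R$-algebra of rank $N$; restriction of scalars along this map then yields that $M$ is finitely presented as an $R$-module. It therefore suffices to show that the function $x \mapsto \dim_{k(x)}(M \otimes_R k(x))$ is locally constant on $\Spec R$.

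For the fibrewise computation, I would use that $R \to R[[t]]/t^N$ is flat (in fact free of finite rank), so that base change to $k(x)$ commutes with the isomorphism (\ref{mixedeqisom}) applied over $R$ and over $k(x)$. This yields a canonical identification $M \otimes_R k(x) \cong \cal{E}_{0,x}/\cal{E}_{1,x}$ of $W_{\cal{O}}(k(x))/\varpi^N$-modules, where $\cal{E}_{i,x} := \cal{E}_i \otimes_{W_{\cal{O}}(R)} W_{\cal{O}}(k(x))$. Over the discrete valuation ring $W_{\cal{O}}(k(x))$, the elementary divisor theorem gives $\cal{E}_{0,x}/\cal{E}_{1,x} \cong \bigoplus_{i=1}^n W_{\cal{O}}(k(x))/\varpi^{\mu_{i,x}}$, where $(\mu_{1,x}, \ldots, \mu_{n,x}) = \mathrm{Inv}(\beta_x)$; taking $k(x)$-dimensions gives $\dim_{k(x)}(M \otimes_R k(x)) = \sum_i \mu_{i,x}$.

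To finish, I would show that $x \mapsto \sum_i \mu_{i,x}$ is locally constant by composing the classifying map $\Spec R \to \ol{\Gr}_N$ with the determinant morphism $\det\colon \ol{\Gr}_N \to \Gr_{\mathbb{G}_m}$ sending $(\cal{E}_1 \subset \cal{E}_0)$ to the quasi-isogeny $\det\cal{E}_1 \dashrightarrow \det\cal{E}_0$. Since $\Gr_{\mathbb{G}_m}$ is a disjoint union of reduced points indexed by $\ZZ$, and the point containing the image at $x$ is the one indexed by $\sum_i \mu_{i,x}$, this function factors through a discrete set and is therefore locally constant on $\Spec R$. The main technical point is the fibrewise identification in the middle step: one has to reconcile the $R$-module structure on $M$ (which a priori comes from the merely multiplicative Teichmüller section $R \to W_{\cal{O}}(R)$) with base change. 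This is precisely what the ramification hypothesis $v_F(p) \geq N$ ensures via (\ref{mixedeqisom}), which upgrades the Teichmüller lift to an honest ring isomorphism modulo $\varpi^N$ and so makes the base change compatible; the same argument then applies uniformly in the equal-characteristic case, where (\ref{mixedeqisom}) is tautological.
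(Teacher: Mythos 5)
Your proof is correct and follows essentially the same route as the paper's: establish that $\cal{E}_0/\cal{E}_1$ is finitely presented over $R$ by passing through $W_{\cal{O}}(R)/\varpi^N \cong R[[t]]/t^N$, compute the fiber dimension using the compatibility $W_{\cal{O},N}(R)\otimes_R k(x)\cong W_{\cal{O},N}(k(x))$, and conclude by the criterion that a finitely presented module over a reduced ring with constant fiber rank is locally free. The only genuine difference is your detour through the determinant morphism $\det\colon \ol{\Gr}_N\to \Gr_{\mathbb{G}_m}$ to establish local constancy of $x\mapsto\sum_i\mu_{i,x}$; this is unnecessary, because the dominance partial order on $\mathbb{X}_\bullet(T)=\mathbb{Z}^n$ already forces $\sum_i\mu_{i,x}=N$ at every point of $\ol{\Gr}_N$, via the equality constraint $m_1+\cdots+m_n=l_1+\cdots+l_n$ built into the definition of that order, so the fiber rank is in fact globally constant equal to $N$, which is how the paper proceeds. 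You should also make explicit that you are invoking reducedness of $R$ --- automatic here since $R$ is a perfect $k$-algebra --- when applying the fibrewise criterion, as the paper does by citing the relevant Stacks project tag; without reducedness the criterion fails.
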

\begin{proof}
The proof is almost the same as in \cite[Lemma 1.5]{Zhumixed}.
Namely, for any perfect $R$-algebra $R'$, there is an isomorphism
\begin{align*}
W_{\cal{O},N}(R)\otimes_R R'&\cong R[[t]]/t^N\otimes_R R'\\
&\cong R'[[t]]/t^N\\
&\cong W_{\cal{O},N}(R').
\end{align*}
Therefore, for $x\in \Spec R$, we obtain
\begin{align*}
(\cal{E}_0/\cal{E}_1)\otimes_R k(x)&\cong (\cal{E}_0/\cal{E}_1)\otimes_{W_{\cal{O},N}(R)}W_{\cal{O},N}(k(x))\\
&\cong (\cal{E}_0/\cal{E}_1)\otimes_{W_{\cal{O}}(R)}W_{\cal{O}}(k(x))\\
&\cong (\cal{E}_0\otimes_{W_{\cal{O}}(R)}W_{\cal{O}}(k(x)))/(\cal{E}_1\otimes_{W_{\cal{O}}(R)}W_{\cal{O}}(k(x)))\\
&=\mathrm{Coker}(\beta_x).
\end{align*}
Since $\mathrm{Inv}(\beta_x)\leq N\omega_1$, we have $\dim \mathrm{Coker}(\beta_x)=N$ for all $x\in \Spec R$.
Hence $\dim (\cal{E}_0/\cal{E}_1)\otimes_R k(x)$ is constant on $\Spec R$.

On the other hand, $\cal{E}_0/\cal{E}_1$ is the cokernel of $\cal{E}_1/\varpi^N \to \cal{E}_0/\varpi^N$.
Also, $\cal{E}_i/\varpi^N\ (i=0,1)$ is a finite projective $W_{\cal{O}}(R)$-module, and hence a finite projective $R$-module.
Therefore, $\cal{E}_0/\cal{E}_1$ is finitely presented as an $R$-module.

Over a reduced ring, a finitely presented module whose fiber dimensions are constant is locally free (see \cite[Tag0FWG]{St}).
The lemma follows.
\end{proof}
Fix an isomorphism $W_{\cal{O},N}(k)^n \cong k^{nN}$ of vector spaces.
By Lemma \ref{projectivity5}, we obtain a morphism $i_{\ol{\Gr}_{N}}\colon \ol{\Gr}_{N}\to \Gr(nN)^{p^{-\infty}}$ defined by 
\begin{align} \label{embusualgr}
\ol{\Gr}_{N}(R)\ni (\cal{E}_1\subset \cal{E}_0) \mapsto (\cal{E}_0/\cal{E}_1)\in \Gr(nN)^{p^{-\infty}}(R)
\end{align}
where $\Gr(nN)$ is a usual Grassmannian, classifying finite dimensional subspaces in $k^{nN}$.
\begin{lemm}\label{embusualgrclosed}
The morphism $i_{\ol{\Gr}_{N}}$ is a closed immersion.
\end{lemm}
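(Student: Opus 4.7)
The plan is to identify $\ol{\Gr}_N$, via $i_{\ol{\Gr}_N}$, with an explicit closed subscheme of $\Gr(nN)^{p^{-\infty}}$ by producing both the subscheme and an inverse map. Under the ramification hypothesis $v_F(p) \geq N$, the isomorphism \eqref{mixedeqisom} (together with the chosen identification $W_{\cal{O},N}(k)^n \cong k^{nN}$) presents $W_{\cal{O},N}(R)$ as an $R$-algebra generated by a fixed nilpotent $k$-endomorphism $\tau$ of $k^{nN}$, namely multiplication by $\varpi$. Let $X \subset \Gr(nN)^{p^{-\infty}}$ be the closed subfunctor cut out by the condition that the tautological subbundle $\cal{K}$ is $\tau$-stable, i.e.\ by the vanishing of the composition
\[
\cal{K} \hookrightarrow k^{nN}\otimes \cal{O}_{\Gr(nN)^{p^{-\infty}}} \xrightarrow{\tau\otimes 1} k^{nN}\otimes \cal{O}_{\Gr(nN)^{p^{-\infty}}} \twoheadrightarrow (k^{nN}\otimes\cal{O}_{\Gr(nN)^{p^{-\infty}}})/\cal{K}.
\]
This is manifestly a closed condition.

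Next I check that $i_{\ol{\Gr}_N}$ factors through $X$, which is immediate: for an $R$-point $(\cal{E}_1\subset \cal{E}_0)$ of $\ol{\Gr}_N$, the associated subbundle $\cal{E}_1/\varpi^N\cal{E}_0 \subset W_{\cal{O},N}(R)^n$ is a $W_{\cal{O},N}(R)$-submodule because $\cal{E}_1$ is $W_{\cal{O}}(R)$-stable. To construct the inverse $X \to \ol{\Gr}_N$, I send a $\tau$-stable subbundle $K \subset W_{\cal{O},N}(R)^n$ to $\cal{E}_1 := \pi^{-1}(K) \subset \cal{E}_0 = W_{\cal{O}}(R)^n$, where $\pi\colon \cal{E}_0 \twoheadrightarrow W_{\cal{O},N}(R)^n$ is the reduction mod $\varpi^N$. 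Then $\cal{E}_1$ is a $W_{\cal{O}}(R)$-submodule of $\cal{E}_0$ containing $\varpi^N \cal{E}_0$, forcing the relative-position bound $\mathrm{Inv}(\beta_x) \leq N\omega_1$, and the two constructions are visibly mutually inverse on $R$-points.

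The main obstacle is verifying that the submodule $\cal{E}_1$ so constructed is a finite projective $W_{\cal{O}}(R)$-module of rank $n$, which is what is needed for $\cal{E}_1$ to actually define an $R$-point of $\ol{\Gr}_N$. My approach is to interpolate the chain $\cal{E}_0 \supset \cal{E}_1 \supset \varpi^N\cal{E}_0$ with the intermediate submodules $\cal{E}^{(i)} := \pi_i^{-1}(K_i)$, where $\pi_i\colon \cal{E}_0 \twoheadrightarrow W_{\cal{O},i}(R)^n$ and $K_i$ is the image of $K$ in $W_{\cal{O},i}(R)^n$. The $\tau$-stability of $K$ (via lifting elements of $K_i$ to $K_{i+1}$) forces $\varpi\cal{E}^{(i)} \subset \cal{E}^{(i+1)}$, so each successive quotient is annihilated by $\varpi$. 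A direct computation identifies $\cal{E}^{(i)}/\cal{E}^{(i+1)}$ with an $R$-module quotient of $\varpi^i W_{\cal{O},N}(R)^n/\varpi^{i+1}W_{\cal{O},N}(R)^n \cong R^n$ by a submodule coming from the $\varpi$-torsion filtration of the $R$-projective module $W_{\cal{O},N}(R)^n/K$; after possibly refining the chain further to split off the torsion layers, each successive quotient becomes a finite projective $R$-module. Lemma \ref{projectivity2} then yields projectivity of every $\cal{E}^{(i)}$, in particular of $\cal{E}_1 = \cal{E}^{(N)}$, while Lemma \ref{projectivity3} gives the rank-$n$ assertion on fibres. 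This completes the identification $i_{\ol{\Gr}_N}\colon \ol{\Gr}_N \xrightarrow{\sim} X$ and hence the proof that $i_{\ol{\Gr}_N}$ is a closed immersion.
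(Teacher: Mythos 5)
Your strategy is genuinely different from the paper's: instead of the soft argument the paper uses (perfectly proper $+$ universally injective $\Rightarrow$ closed immersion, via \cite[Lemma 3.8]{BS}), you try to identify $\ol{\Gr}_N$ explicitly with the $\tau$-stable locus $X\subset \Gr(nN)^{p^{-\infty}}$ by producing an inverse map on $R$-points. The factoring $i_{\ol{\Gr}_N}\colon \ol{\Gr}_N\to X$ and the closedness of $X$ are fine, but the construction of the inverse is where the argument breaks down.

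The problem is the projectivity step. Your filtration $\cal{E}^{(i)}:=\pi_i^{-1}(K_i)$ does indeed satisfy $\varpi\cal{E}^{(i)}\subset\cal{E}^{(i+1)}$, but the successive quotients $\cal{E}^{(i)}/\cal{E}^{(i+1)}$ need \emph{not} be finite projective over $R$, because the image $K_i$ of a subbundle $K$ under truncation $W_{\cal{O},N}(R)^n\to W_{\cal{O},i}(R)^n$ is in general not a subbundle. Concretely, take $N=2$, $n=4$, $R=k[x,y,z]^{1/p^{\infty}}$ and let $K\subset W_{\cal{O},2}(R)^4$ be the kernel of the $W_{\cal{O},2}(R)$-linear surjection $W_{\cal{O},2}(R)^4\to Q:=R\langle\bar f_1,\ldots,\bar f_4\rangle$ sending $e_i\mapsto \bar f_i$, where $\varpi\bar f_1=x\bar f_4$, $\varpi\bar f_2=y\bar f_4$, $\varpi\bar f_3=z\bar f_4$, $\varpi\bar f_4=0$; this $K$ is a free rank-$4$ direct summand, hence a $\tau$-stable subbundle. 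One computes $K_1=(x,y,z)\cdot e_4\subset R^4$, so $\cal{E}^{(0)}/\cal{E}^{(1)}\cong R^3\oplus R/(x,y,z)$, which is not $R$-projective; likewise $\cal{E}^{(1)}/\cal{E}^{(2)}\cong (x,y,z)$. No refinement of this chain can make the graded pieces projective, because the obstruction ($R/(x,y,z)$) is already a direct summand of a graded piece. So Lemma~\ref{projectivity2} simply does not apply, and the phrase ``after possibly refining the chain further to split off the torsion layers'' conceals a claim that is false as stated. (In this example $\cal{E}_1$ does turn out to be free --- one checks the four elements $\varpi e_i-[f_i]e_4$ with $f_4=0$ generate it freely --- so the final \emph{statement} $\ol{\Gr}_N\cong X$ is presumably true, but your proof of it is not.) The paper's route avoids this entirely: it never needs to know that every $\tau$-stable subbundle lifts to a lattice, only that $\ol{\Gr}_N\to \Gr(nN)^{p^{-\infty}}$ is perfectly proper and injective on $K$-points, and then invokes the perfect-scheme-specific fact that universally injective $+$ perfectly proper implies closed immersion.
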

\begin{proof}
We know that $\ol{\Gr}_{N}$ is perfectly proper (i.e. separated and universally closed) over $k$ and $\Gr(nN)^{p^{-\infty}}$ is separated (see \cite[Lemma 3.4]{BS}).
Therefore $i_{\ol{\Gr}_{N}}$ is perfectly proper.

Furthermore, the map between the sets of $R$-valued points 
\[
i_{\ol{\Gr}_{N}}(R)\colon \ol{\Gr}_{N}(R)\to \Gr(nN)^{p^{-\infty}}(R)
\]
is injective for any perfect $k$-algebra $R$.
In particular, $i_{\ol{\Gr}_{N}}(K)$ is injective for any algebraically closed field $K$.
This implies that $i_{\ol{\Gr}_{N}}$ is universally injective.

By \cite[Lemma 3.8]{BS}, a universal homeomorphism between perfect schemes is an isomorphism.
It follows that a perfectly proper and universally injective morphism between perfect schemes is a closed immersion.
This proves the claim.
\end{proof}
Now we can prove Theorem \ref{Grbarisom}.
\begin{proof}[Proof of Theorem \ref{Grbarisom}]
From Lemma \ref{embusualgrclosed}, we obtain a closed immersion $i_{\ol{\Gr}_{N}}\colon \ol{\Gr}_{N}\hookrightarrow \Gr(nN)^{p^{-\infty}}$ by fixing an isomorphism 
\begin{align}\label{vecspisom}
W_{\cal{O},N}(k)^n \cong k^{nN}
\end{align}
of vector spaces.
Consider the isomorphism
\begin{align*}
(k[[t]]/t^{N})^n \overset{(\ref{mixedeqisom})}{\cong} W_{\cal{O},N}(k)^n \overset{(\ref{vecspisom})}{\cong} k^{nN}.
\end{align*}
Then similarly we obtain a closed immersion
\[
i^{\flat}_{\ol{\Gr}_{N}}\colon \ol{\Gr}_{N}^{\flat} \hookrightarrow \Gr(nN)^{p^{-\infty}}.
\]
Let $\mu\in \mathbb{X}^{+}_{\bullet}(T)$ be such that $\mu\leq N\omega_1$.
Then by construction, the following diagram is commutative:
\[
\vcenter{
\xymatrix{
\Gr_{\mu}\ar@{^{(}->}[r]\ar[d]^{\rotatebox{90}{$\sim$}}_{(\ref{Grmuisom})} &\ol{\Gr}_N \ar@{^{(}->}[r]& \Gr(nN)^{p^{-\infty}}\ar@{=}[d]\\
\Gr^{\flat}_{\mu}\ar@{^{(}->}[r] &\ol{\Gr}^{\flat}_N \ar@{^{(}->}[r]& \Gr(nN)^{p^{-\infty}}.
}
}
\]
Since $\ol{\Gr}_N=\ds\bigcup_{\mu\leq N\omega_1} \Gr_{\mu}$ and $\ol{\Gr}^{\flat}_N=\ds\bigcup_{\mu\leq N\omega_1} \Gr^{\flat}_{\mu}$, it follows that $\ol{\Gr}_N$ and $\ol{\Gr}^{\flat}_N$ coincide as perfect closed subschemes of $\Gr(nN)^{p^{-\infty}}$.
\end{proof}
More precisely, the isomorphism $\ol{\Gr}_{N}\cong \ol{\Gr}^{\flat}_N$ has some equivariance.
To explain this, consider the following lemma:
\begin{lemm}\label{lemm:Grbartriv}
The $L^+\GL_n^{(N)}$-action on $\ol{\Gr}_{N}$ is trivial.\\
Similarly, $L^+\GL_n^{\flat, (N)}$-action on $\ol{\Gr}^{\flat}_{N}$ is trivial.
\end{lemm}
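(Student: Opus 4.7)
The plan is to use the explicit moduli description of $\ol{\Gr}_N$ in terms of lattices and reduce the statement to an elementary computation. Combining the moduli interpretation (\ref{Grmoduli}) with the observation in (\ref{eqn:E1E2incl}) that any $R$-valued point of $\ol{\Gr}_N$ satisfies $\varpi^N\cal{E}_0 \subset \cal{E} \subset \cal{E}_0$, we may identify $\ol{\Gr}_N(R)$ with the set of projective $\cal{O}_R$-submodules $\cal{E} \subset \cal{E}_0 = \cal{O}_R^n$ of rank $n$ satisfying $\varpi^N\cal{E}_0 \subset \cal{E}$. Under this identification, the natural action of $h \in L^+\GL_n(R) = \GL_n(\cal{O}_R)$ on $\ol{\Gr}_N(R)$ sends $\cal{E}$ to its image $h(\cal{E}) \subset \cal{E}_0$, since the $L^+\GL_n$-action on $\Gr_{\GL_n}$ arises from left translation on $LG$, i.e., post-composition with the trivialization $\beta\colon \cal{E}[1/\varpi]\to \cal{E}_0[1/\varpi]$.

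Now let $h \in L^+\GL_n^{(N)}(R)$, so that $h \in \GL_n(\cal{O}_R)$ with $h \equiv 1 \pmod{\varpi^N}$, and write $h = 1+\varpi^N M$ for some matrix $M$ with entries in $\cal{O}_R$. For any $x \in \cal{E}$, we have $M(x) \in M(\cal{E}_0) \subset \cal{E}_0$, hence $\varpi^N M(x) \in \varpi^N\cal{E}_0 \subset \cal{E}$, and therefore $h(x)=x+\varpi^N M(x) \in \cal{E}$. This shows $h(\cal{E}) \subset \cal{E}$. Applying the same reasoning to $h^{-1}$, which also lies in $L^+\GL_n^{(N)}(R)$ since $L^+\GL_n^{(N)}$ is a group scheme, gives $h^{-1}(\cal{E}) \subset \cal{E}$, so that $h(\cal{E}) = \cal{E}$. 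By Yoneda, the action morphism $L^+\GL_n^{(N)} \times \ol{\Gr}_N \to \ol{\Gr}_N$ agrees with the second projection, i.e., the action is trivial.

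For $\ol{\Gr}_N^\flat$ the argument is formally identical: replace $W_{\cal{O}}(R)$ with $R[[t]]$ and $\varpi$ with $t$, noting that the same chain of inclusions $t^N\cal{E}_0^\flat \subset \cal{E}^\flat \subset \cal{E}_0^\flat$ holds and that $L^+\GL_n^{\flat,(N)}(R)$ consists of matrices in $\GL_n(R[[t]])$ congruent to $1$ modulo $t^N$. I do not anticipate any genuine obstacle here: the only point to verify is that congruence modulo $\varpi^N$ of the group element matches precisely the stability condition $\varpi^N\cal{E}_0 \subset \cal{E}$ that is already built into the definition of $\ol{\Gr}_N$.
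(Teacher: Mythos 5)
Your proof is correct and takes essentially the same approach as the paper: both use the identification of $\ol{\Gr}_N(R)$ with lattices $\varpi^N\cal{E}_0\subset\cal{E}\subset\cal{E}_0$ and verify that $h\in L^+\GL_n^{(N)}(R)$ fixes $\cal{E}$. The paper phrases the fixed-point claim via the quotient $\cal{E}/\varpi^N\cal{E}_0$ and cites Lemma~\ref{embusualgrclosed}, while you make the matrix computation $h=1+\varpi^N M$ explicit, but the substance is the same.
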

Here, $L^+\GL_n^{(N)},L^+\GL_n^{\flat, (N)}$ are the $N$-th congruence groups.
\begin{proof}
By the $L^+\GL_n$-action on $\ol{\Gr}_{N}$, an element $A\in L^+\GL_n(R)$ sends $(\cal{E}_1\overset{\beta}{\subset} \cal{E}_0)\in \ol{\Gr}_{N}(R)$ to $(\cal{E}_1\overset{A \circ \beta}{\subset} \cal{E}_0)\in \ol{\Gr}_{N}(R)$.

By Lemma \ref{embusualgrclosed} and (\ref{eqn:E1E2incl}), a point $(\cal{E}_1\subset \cal{E}_0)\in \ol{\Gr}_{N}(R)$ is completely determined by its quotient $\cal{E}_1/\varpi^N\cal{E}_0$.

Since $\cal{E}_1/\varpi^N\cal{E}_0$ does not change by the $L^+\GL_n^{(N)}$-action, the lemma follows.
\end{proof}
By Lemma \ref{lemm:Grbartriv}, $L^N\GL_n$ acts on $\ol{\Gr}_{N}$.
Similarly, $L^N\GL_n^{\flat}$ acts on $\ol{\Gr}^{\flat}_N$.
But there is an isomorphism
\begin{align}\label{eqn:mixedeqisomL}
L^N\GL_n\cong L^N\GL_n^{\flat}
\end{align}
by the isomorphism (\ref{mixedeqisom}).
\begin{prop}\label{prop:Grbarequiv}
The isomorphism $\ol{\Gr}_{N}\cong \ol{\Gr}^{\flat}_N$ in Theorem \ref{Grbarisom} is $L^N\GL_n$-equivariant through the isomorphism (\ref{eqn:mixedeqisomL}).
\end{prop}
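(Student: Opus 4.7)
The plan is to trace the two actions through the explicit construction of the isomorphism $\ol{\Gr}_N \cong \ol{\Gr}^{\flat}_N$ given in Theorem \ref{Grbarisom}, unwinding them in a common presentation of both sides as moduli of submodules of a free rank-$n$ module over a length-$N$ truncation. Concretely, using Lemma \ref{projectivity5} and the inclusion $\beta(\cal{E}_1)\supset \varpi^N\cal{E}_0$ noted in (\ref{eqn:E1E2incl}), an $R$-point $(\cal{E}_1 \subset \cal{E}_0)$ of $\ol{\Gr}_N(R)$ is determined by the submodule $\cal{E}_1/\varpi^N\cal{E}_0 \subset W_{\cal{O},N}(R)^n$; the analogous statement holds for $\ol{\Gr}^{\flat}_N(R)$ with $(R[[t]]/t^N)^n$ in place of $W_{\cal{O},N}(R)^n$; and the isomorphism of Theorem \ref{Grbarisom} is, on $R$-points, the bijection induced by applying (\ref{mixedeqisom}) componentwise.

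Next I would unwind the $L^N\GL_n$-action on $\ol{\Gr}_N$ in this presentation. By the definition recalled in the proof of Lemma \ref{lemm:Grbartriv}, an $A\in L^+\GL_n(R)$ sends $(\cal{E}_1 \overset{\beta}{\subset}\cal{E}_0)$ to $(\cal{E}_1 \overset{A\circ\beta}{\subset}\cal{E}_0)$; in terms of submodules of $W_{\cal{O},N}(R)^n$, the image of $A$ in $L^N\GL_n(R)=\GL_n(W_{\cal{O},N}(R))$ acts on $\cal{E}_1/\varpi^N\cal{E}_0$ by the natural matrix action on $W_{\cal{O},N}(R)^n$. The same description applies verbatim on the $\flat$-side: an element $A^\flat \in L^N\GL_n^\flat(R) = \GL_n(R[[t]]/t^N)$ acts by matrix multiplication on $(R[[t]]/t^N)^n$.

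Finally, the group isomorphism (\ref{eqn:mixedeqisomL}) is itself induced entrywise by the ring isomorphism (\ref{mixedeqisom}); thus matrix multiplication by $A$ on $W_{\cal{O},N}(R)^n$ corresponds, under the componentwise identification with $(R[[t]]/t^N)^n$, exactly to matrix multiplication by the image of $A$ under (\ref{eqn:mixedeqisomL}). Hence the bijection on submodules induced by (\ref{mixedeqisom}) is $L^N\GL_n$-equivariant, which is the claim. I expect no essential obstacle here; the whole argument is a careful unfolding of the definitions of both the group isomorphism (\ref{eqn:mixedeqisomL}) and the space isomorphism of Theorem \ref{Grbarisom} so as to see that they share a common origin in (\ref{mixedeqisom}). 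The only mildly delicate point is the bookkeeping verification that the two constructions yield \emph{literally equal} actions, rather than merely conjugate ones, but this is automatic from the functoriality of the componentwise construction.
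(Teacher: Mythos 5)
Your argument is correct and is precisely the intended unfolding of the paper's one-line proof ("this follows from the construction"): both the isomorphism of Theorem \ref{Grbarisom} and the group isomorphism (\ref{eqn:mixedeqisomL}) are induced componentwise by the ring isomorphism (\ref{mixedeqisom}), and after identifying an $R$-point $(\cal{E}_1\subset\cal{E}_0)$ with the direct-summand submodule $\cal{E}_1/\varpi^N\cal{E}_0\subset W_{\cal{O},N}(R)^n$ (using (\ref{eqn:E1E2incl}) and Lemma \ref{projectivity5}), the $L^N\GL_n$-action becomes literal matrix multiplication, which transparently commutes with the componentwise transport of structure. This is the same approach the paper takes.
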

\begin{proof}
This follows from the construction of the isomorphism $\ol{\Gr}_{N}\cong \ol{\Gr}^{\flat}_N$.
\end{proof}
Write $\ol{\GL}_n$ for the perfection of the general linear group over $k$.
Then by Proposition \ref{prop:Grbarequiv}, the isomorphism $\ol{\Gr}_{N}\cong \ol{\Gr}^{\flat}_N$ is in particular $\ol{\GL}_n$-equivariant through the map $\ol{\GL}_n\to L^N\GL_n$ coming from the natural map $k\to k[[t]]/t^N$.
\subsection{Isomorphism for general $G$}
If $N$ is a positive integer, and if $v_{F_1}(p)\geq N$ and $v_{F_2}(p)\geq N$ hold, then by the same argument as (\ref{mixedeqisom}), there is a ring isomorphism
\begin{align}\label{eqn:mixedmixedisom}
\cal{O}_1/\varpi_1^N&\cong \cal{O}_2/\varpi_2^N,\\
\sum_{i=0}^{N-1}[a_i]\varpi_1^i&\mapsto \sum_{i=0}^{N-1}[a_i]\varpi_2^i
\end{align}
where in the equal characteristic case, we define $[\cdot]$ by $[a]:=a$ for any $a\in k$.
\begin{thm}[restatement of Theorem \ref{thm:Grisomgeneralintro}] \label{thm:Grisomgeneral}
Let $(G_{\ZZ},B_{\ZZ},T_{\ZZ})$ be a triple of a split reductive group over $\ZZ$, its Borel group and its maximal torus such that $T\subset B$.
Put $(G_1,B_1,T_1)=(G_{\ZZ},B_{\ZZ},T_{\ZZ})\otimes_{\ZZ}\cal{O}_1$ and $(G_2,B_2,T_2)=(G_{\ZZ},B_{\ZZ},T_{\ZZ})\otimes_{\ZZ}\cal{O}_2$.
Let $\mu$ be an element in $\mathbb{X}_{\bullet}^+(T_{\ZZ})$.
Then there exists a constant $N:=N_{G_{\ZZ},\mu}\in \mathbb{Z}_{>0}$ depending only on the isomorphism class of $G_{\ZZ}$ and $\mu$ such that if $v_{F_1}(p)\geq N$ and $v_{F_2}(p)\geq N$, then there is a ``canonical'' isomorphism
\begin{align*}
\alpha_{G_{\ZZ}, \mu,\cal{O}_1,\cal{O}_2}\colon \Gr_{G_1,\leq \mu} \cong \Gr_{G_2,\leq \mu}.
\end{align*}
Moreover, the Schubert cells $\Gr_{G_1,\lambda}$ and $\Gr_{G_2,\lambda}$ correspond under $\alpha_{G_{\ZZ}, \mu,\cal{O}_1,\cal{O}_2}$ for any $\lambda\leq \mu$.
\end{thm}
Here the term ``canonical'' means the following:
\begin{enumerate}
\item \label{item:canonicity:trans} If $v_{F_3}(p)\geq N$, then
\[
\alpha_{G_{\ZZ}, \mu,\cal{O}_1,\cal{O}_3}=\alpha_{G_{\ZZ}, \mu,\cal{O}_2,\cal{O}_3}\circ \alpha_{G_{\ZZ}, \mu,\cal{O}_1,\cal{O}_2}.
\]
\item \label{item:canonicity:functorial} Let $(G'_{\bb{Z}},B'_{\ZZ},T'_{\ZZ})$ be another triple of a reductive group over $\ZZ$.
Put $(G'_1,B'_1,T'_1)=(G'_{\bb{Z}},B'_{\ZZ},T'_{\ZZ})\otimes_{\ZZ} \cal{O}_1$ and $(G'_2,B'_2,T'_2)=(G'_{\bb{Z}},B'_{\ZZ},T'_{\ZZ})\otimes_{\ZZ} \cal{O}_2$.
Let $f_{\ZZ}\colon G_{\ZZ}\to G'_{\ZZ}$ be a homomorphism of algebraic groups over $\ZZ$ such that $f_{\ZZ}(B_{\ZZ})\subset B'_{\ZZ}$ and $f_{\ZZ}(T_{\ZZ})\subset T'_{\ZZ}$ hold.
Let $\mu\in \bb{X}_{\bullet}^+(T_{\ZZ})$.
We denote by $f(\mu)$ the image of $\mu$ under the map $\bb{X}^+_{\bullet}(T_{\ZZ})\to \bb{X}^+_{\bullet}(T'_{\ZZ})$ induced by $f_{\ZZ}$.
Take $\mu'\in \bb{X}^+_{\bullet}(T'_{\ZZ})$ such that $\mu'\geq f(\mu)$.
Put $\wt{N}:=\max\{N_{G_{\ZZ},\mu},N_{G'_\ZZ,\mu'}\}$.
If $v_{F_1}(p)\geq \wt{N}$ and $v_{F_2}(p)\geq \wt{N}$ hold, then the following square is commutative:
\[
\xymatrix@C=70pt{
\Gr_{G_1,\leq \mu}\ar[r]_-{\sim}^-{\alpha_{G_{\ZZ}, \mu,\cal{O}_1,\cal{O}_2}}\ar[d]_{f_1}&\Gr_{G_2,\leq \mu}\ar[d]^{f_2}\\
\Gr_{G'_1,\leq \mu'}\ar[r]^-{\sim}_-{\alpha_{G'_{\ZZ}, \mu',\cal{O}_1,\cal{O}_2}}&\Gr_{G'_2,\leq \mu'},
}
\]
where the vertical maps are the maps induced by $f_{\ZZ}$.
In particular, if $\lambda$ and $\mu \in \bb{X}_{\bullet}^+(T_{\ZZ})$ satisfy $\lambda \leq \mu$, then
\[
\alpha_{G_{\ZZ}, \lambda,\cal{O}_1,\cal{O}_2}=(\alpha_{G_{\ZZ}, \mu,\cal{O}_1,\cal{O}_2})|_{\Gr_{G_1,\leq \lambda}}
\]
\end{enumerate}
\begin{proof}[Proof of Theorem \ref{thm:Grisomgeneral}]
It is enough to construct an isomorphism $\alpha_{G_{\ZZ}, \mu,\cal{O}_1,\cal{O}_2}$ satisfying the canonicity condition \ref{item:canonicity:functorial} in the case where $\cal{O}_1$ is of mixed characteristic and $\cal{O}_2$ is of equal characteristic.
Put
\begin{align*}
&F:=F_1,\\
&G:=G_1,\ G^{\flat}:=G_2,\\
&\Gr_{G}:=\Gr_{G_1},\ \Gr^{\flat}_G:=\Gr_{G_2}
\end{align*}
and so on.

Choose a closed embedding $i_{\ZZ}\colon G_{\ZZ}\hookrightarrow \GL_{n,\ZZ}$ of algebraic groups for some $n$.
Then the induced maps
\begin{align*}
i&\colon \Gr_G\hookrightarrow \Gr_{\GL_n},\\
i^{\flat}&\colon \Gr^{\flat}_G\hookrightarrow \Gr^{\flat}_{\GL_n}
\end{align*}
are closed immersions (see \cite[Proposition 1.20]{Zhumixed} and \cite[Corollary
9.7.7]{Ap}).
Since $\Gr_{G,\leq \mu}$ is a connected scheme, we obtain
\begin{align*}
i(\Gr_{G,\leq \mu})\subset \varpi^{m}\ol{\Gr}_{N}
\end{align*}
for some $m$ and $N$.
Choose $i_{\ZZ}$ and $N$ so that $N$ is the smallest, and define $N_{G_{\ZZ},\mu}$ as this minimum $N$.
Assume $v_{F}(p)\geq N_{G_{\ZZ},\mu}$.
Then by Theorem \ref{Grbarisom}, there is a canonical isomorphism
\begin{align}\label{eqn:pimGrbarisom}
\varpi^{m}\ol{\Gr}_{N} \cong t^{m}\ol{\Gr}^{\flat}_{N}.
\end{align}
Furthermore, this isomorphism is $L^N\GL_n(\cong L^N\GL_n^{\flat})$-equivariant by Proposition \ref{prop:Grbarequiv}.
The image $i^{\flat}(\Gr^{\flat}_{G,\leq \mu})$ is the smallest $L^+G^{\flat}$-stable subspace of $\Gr^{\flat}_{\GL_n}$ containing $i^{\flat}(t^{\lambda})$ for all $\lambda\leq \mu$.
Since $i(\varpi^{\lambda})=\varpi^{i(\lambda)}$ and $i^{\flat}(t^{\lambda})=t^{i(\lambda)}$ correspond under the isomorphism (\ref{eqn:pimGrbarisom}), it follows that $i^{\flat}(t^{\lambda})\in t^{m}\ol{\Gr}^{\flat}_{N}$ for all $\lambda\leq \mu$, and that 
\[
i^{\flat}(\Gr^{\flat}_{G,\leq \mu})\subset t^{m}\ol{\Gr}^{\flat}_{N}.
\]
Therefore, $i^{\flat}(\Gr^{\flat}_{G,\leq \mu})$ is the smallest $L^+G^{\flat}$-stable subspace of $t^{m}\ol{\Gr}^{\flat}_{N}$ and $i(\Gr_{G,\leq \mu})$ is the smallest $L^+G$-stable subspace of $t^{m}\ol{\Gr}^{\flat}$.
The subgroup $L^+G^{(N)}(\subset L^+\GL_n^{(N)})$ acts trivially on $\varpi^{m}\ol{\Gr}_{N}$ by Lemma \ref{lemm:Grbartriv}.
Similarly, $i^{\flat}(\Gr^{\flat}_{G,\lambda})$ is the $L^NG^{\flat}$-orbit of $i^{\flat}(t^{\lambda})$.
Hence, $i^{\flat}(\Gr^{\flat}_{G,\leq \mu})$ is the smallest $L^NG^{\flat}$-stable subspace of $t^{m}\ol{\Gr}^{\flat}_{N}$ and $i(\Gr_{G,\leq \mu})$ is the smallest $L^NG$-stable subspace of $t^{m}\ol{\Gr}^{\flat}$.

The isomorphism (\ref{mixedeqisom}) induces isomorphisms $(G\ \rm{mod}\ \varpi^N)\overset{\sim}{\to} (G^{\flat}\ \rm{mod}\ t^N)$ and $(\GL_n\ \rm{mod}\ \varpi^N)\overset{\sim}{\to} (\GL_n^{\flat}\ \rm{mod}\ t^N)$.
Moreover, the embedding $i_{\ZZ}$ induces embeddings $(G\ \rm{mod}\ \varpi^N)\hookrightarrow (\GL_n\ \rm{mod}\ \varpi^N)$ and $(G^{\flat}\ \rm{mod}\ t^N)\hookrightarrow (\GL^{\flat}_n\ \rm{mod}\ t^N)$.
By construction, the square
\[
\xymatrix{
(G\ \rm{mod}\ \varpi^N)\ar@{^{(}->}[r]\ar[d]_{\phi}^{\rotatebox{90}{$\sim$}}&(\GL_n\ \rm{mod}\ \varpi^N)\ar[d]_{\psi}^{\rotatebox{90}{$\sim$}}\\
(G^{\flat}\ \rm{mod}\ t^N)\ar@{^{(}->}[r]&(\GL_n^{\flat}\ \rm{mod}\ t^N)
}
\]
is commutative.
Therefore, there is a commutative square of the form
\[
\xymatrix{
L^NG\ar@{^{(}->}[r]\ar[d]^{\rotatebox{90}{$\sim$}}&L^N\GL_n\ar[d]^{\rotatebox{90}{$\sim$}}\\
L^NG^{\flat}\ar@{^{(}->}[r]&L^N\GL_n^{\flat}.
}
\]
Since (\ref{eqn:pimGrbarisom}) is $L^N\GL_n(\cong L^N\GL_n^{\flat})$-equivariant, it is also $L^NG(\cong L^NG^{\flat})$-equivariant.
Hence we obtain the isomorphism
\[
\alpha_{G_{\ZZ},\mu,\cal{O}_1,k[[t]]}\colon \Gr_{G,\leq \mu}\cong \Gr^{\flat}_{G,\leq \mu}.
\]

Finally, we want to prove the canonicity condition \ref{item:canonicity:functorial}.
(This implies that the isomorphism $\alpha_{G_{\ZZ},\mu,\cal{O}_1,k[[t]]}$ does not depend on the choice of $\wt{i}$, as we will mention later.)
Let $(G'_{\ZZ},B'_{\ZZ},T'_{\ZZ})$ be another triple.
Let $\mu'$ be an element in $\mathbb{X}_{\bullet}^+(T_{\ZZ})$.
Let $f_{\ZZ}\colon G_{\ZZ}\to G'_{\ZZ}$ be a homomorphism of algebraic groups over $\ZZ$ such that $f_{\ZZ}(B_{\ZZ})\subset B'_{\ZZ}$ and $f_{\ZZ}(T_{\ZZ})\subset T'_{\ZZ}$ hold.
Choose embeddings $i_{\ZZ}\colon G_{\ZZ}\hookrightarrow \GL_{n,\ZZ}$ and $i'_{\ZZ}\colon G'_{\ZZ}\hookrightarrow \GL_{n',\ZZ}$ so that these embeddings induce closed immersions $i\colon \Gr_{G,\mu}\hookrightarrow \varpi^m\Gr_{\GL_n,\leq N_{G_{\ZZ},\mu}\omega_1}$ and $i\colon \Gr_{G',\mu'}\hookrightarrow \varpi^{m'}\Gr_{\GL_{n'},\leq N_{G'_{\ZZ},\mu'}\omega_1}$ for some $m,m'$.
Put $N:=N_{G_{\ZZ},\mu}, N':=N_{G'_{\ZZ},\mu'}$ and $\wt{N}:=\max\{N,N'\}$.
Consider two isomorphisms

The closed immersion
\[
(i_{\ZZ},i'_{\ZZ}\circ f_{\ZZ})\colon G\hookrightarrow GL_n\times GL_{n'}
\]
induces maps
\begin{align*}
\Gr_{G,\leq \mu}&\to \Gr_{\GL_n\times \GL_{n'}}\cong \Gr_{\GL_n}\times \Gr_{\GL_{n'}}\\
\Gr^{\flat}_{G,\leq \mu}&\to \Gr^{\flat}_{\GL_n\times \GL_{n'}}\cong \Gr^{\flat}_{\GL_n}\times \Gr^{\flat}_{\GL_{n'}}.
\end{align*}
By the same argument as above, the image of the latter is contained in $(t^m\Gr^{\flat}_{\GL_n,\leq N\omega_1})\times (t^{m'}\Gr^{\flat}_{\GL_{n'},\leq N'\omega_1})$ and there is an isomorphism which makes the following diagram commute, assuming only that $v_F(p)\geq \wt{N}$:
\[
\vcenter{
\xymatrix@!C{
\Gr_{G,\leq \mu}\ar[r]\ar@{.>}[d]&(\varpi^m\Gr_{\GL_n,\leq N\omega_1})\times (\varpi^{m'}\Gr_{\GL_{n'},\leq N'\omega_1})\ar[d]\\
\Gr^{\flat}_{G,\leq \mu}\ar[r]&(t^m\Gr^{\flat}_{\GL_n,\leq N\omega_1})\times (t^{m'}\Gr^{\flat}_{\GL_{n'},\leq N'\omega_1}).
}
}
\]

Then we obtain the diagram
\[
\xymatrix{
\Gr_{G,\leq \mu}\ar[rr]\ar[dd]^{\rotatebox{90}{$\sim$}}\ar[rd]&&\Gr_{G',\leq \mu'}\ar[dd]^{\rotatebox{90}{$\sim$}}\ar[rd]&\\
&(\varpi^m\Gr_{\GL_n,\leq N\omega_1})\times (\varpi^{m'}\Gr_{\GL_{n'},\leq N'\omega_1})\ar[rr]^{\rm{pr}_2}\ar[dd]^{\rotatebox{90}{$\sim$}}&&\varpi^{m'}\Gr_{\GL_{n'},\leq N'\omega_1}\ar[dd]^{\rotatebox{90}{$\sim$}}\\
\Gr^{\flat}_{G,\leq \mu}\ar[rd]&&\Gr^{\flat}_{G',\leq \mu'}\ar[rd]&\\
&(t^m\Gr^{\flat}_{\GL_n,\leq N\omega_1})\times (t^{m'}\Gr^{\flat}_{\GL_{n'},\leq N'\omega_1})\ar[rr]^{\rm{pr}_2}&&t^{m'}\Gr^{\flat}_{\GL_{n'},\leq N'\omega_1}
}
\]
where all the squares are commutative.

We have the map $\Gr^{\flat}_{G,\leq \mu}\to \Gr^{\flat}_{G',\leq \mu'}$ which makes the cube commutative.
This map coincides with the map induced by $f_{\ZZ}$.
This proves the canonicity.
The canonicity shows that $\alpha_{G_{\ZZ},\mu,\cal{O}_1,k[[t]]}$ does not depend on the choice of $i_{\ZZ}$, by considering the case where $G_{\ZZ}=G'_{\ZZ}$, $f_{\ZZ}=\rm{id}_{G_{\ZZ}}$ in the above argument.
\end{proof}
\section{Geometric Satake equivalence and Springer correspondence}\label{scn:GSSC}
\subsection{Preliminaries on Springer correspondence}
The results in this subsection are parallel to \cite[\S 2.7]{AHR}.
Let $\fk{g}_{\rs}\subset \fk{g}$ be the subset consisting of regular semisimple elements.
Recall that there is a diagram
\[
\xymatrix{
\bar{G}\times^{\bar{B}} \fk{u}
\ar@{^{(}->}[r]\ar[d]_{\mu_\cal{N}}
\ar@{}[rd]|{\square}
&\bar{G}\times^{\bar{B}} \fk{b}\ar[d]^{\mu_{\fk{g}}}\ar@{}[rd]|{\square}&\bar{G} \times^{\bar{B}}(\fk{g}_\rs\cap \fk{b}) \ar@{_{(}->}[l]\ar[d]^{\mu_{\fk{g}}^\rs}\\
\cal{N}_G\ar@{^{(}->}_{i_{\fk{g}}}[r]&\fk{g}&\fk{g}_\rs \ar@{_{(}->}[l]^-{j_{\fk{g}}}
}
\]
where all horizontal maps are inclusions, and all vertical maps send $(g,x)$ to $g\cdot x$.
Since $\mu_{\fk{g}}$ is proper and small,
\begin{align*}
\Groth:=(\mu_{\fk{g}})_!{\Qlb}_{\bar{G}\times^{\bar{B}} \fk{b}}[\dim \fk{g}]
\end{align*}
is a $\bar{G}$-equivariant perverse sheaf on $\fk{g}$.
There is a canonical isomorphism
\begin{align*}
\Groth\cong (j_{\fk{g}})_{!*}((\mu_{\fk{g}}^\rs)_!\Qlb[\dim\fk{g}])
\end{align*}
 and $\mu^\rs_{\fk{g}}$ is a Galois covering with Galois group $W_G$.
Hence we obtain a $W_G$-action on $\Groth$.

Moreover, since $\mu_{\cal{N}}$ is proper and semismall,
\[
\Spr:=(\mu_{\cal{N}})_!{\Qlb}_{\bar{G}\times^{\bar{B}}\fk{u}}[\dim\cal{N}_G]
\]
is a $\bar{G}$-equivariant perverse sheaf on $\cal{N}_G$.
There is a canonical isomorphism
\[
\Spr\cong (i_{\fk{g}})^*\Groth[-r]
\]
 where $r=\rm{rk}(\bar{G})=\dim\fk{g}-\dim\cal{N}_G$.
Hence we obtain a $W_G$-action on $\Spr$.
This induces a functor
\begin{align*}
\bb{S}_G\colon \Perv_{\bar{G}}(\cal{N}_G^{p^{-\infty}},\Qlb)\cong \Perv_{\bar{G}}(\cal{N}_G,\Qlb)&\to \Rep(W_G,\Qlb),\\
M&\mapsto \rm{Hom}_{\Perv_{\bar{G}}(\cal{N}_G,\Qlb)}(\Spr,M).
\end{align*}
For the first equivalence, note that the \'{e}tale topos on a scheme is equivalent to that on its perfection.
\subsection{Functor $\scr{S}_G^{\sm}$}
The results in this subsection are parallel to \cite[\S 2.3]{AHR}.
Let $z_G\colon \Gr_G^{\circ}\hookrightarrow \Gr_G$ be the inclusion where $\Gr_G^{\circ}$ is the connected component of $\Gr$ containing $\varpi^0$.
The functor
\[
(z_G)_*\colon \Perv_{L^+G}(\Gr_G^{\circ},\Qlb)\to \Perv_{L^+G}(\Gr_G,\Qlb)
\]
is fully faithful.
The essential image of $\scr{S}_G\circ (z_G)_*$ is the subcategory $\Rep(\check{G},\Qlb)^{Z(\check{G})}$ of $\Rep(\check{G},\Qlb)$ consisting of representations on which $Z(\check{G})$ acts trivially.
Let $\mathbf{I}_{\check{G}}\colon \Rep(\check{G},\Qlb)^{Z(\check{G})}\to \Rep(\check{G},\Qlb)$ be the inclusion.
There is a unique equivalence of categories
\[
\scr{S}_G^\circ \colon \Perv_{L^+G}(\Gr_G^{\circ},\Qlb)\to \Rep(\check{G},\Qlb)^{Z(\check{G})}
\]
such that 
\[
\mathbf{I}_{\check{G}}\circ \scr{S}_{G}^\circ=\scr{S}_G\circ (z_G)_*.
\]
Since $(z_G)_*$ is left adjoint to $(z_G)^!$ and $\mathbf{I}_{\check{G}}$ is left adjoint to $(-)^{Z(\check{G})}\colon \Rep(\check{G},\Qlb)\to \Rep(\check{G},\Qlb)^{Z(\check{G})}$, there is a canonical isomorphism
\begin{align}\label{eqn:ZGinvSG}
(-)^{Z(\check{G})}\circ \scr{S}_{G}\Iff \scr{S}_G^\circ\circ (z_G)^!.
\end{align}

Let $f_G\colon \Gr_G^{\sm}\hookrightarrow \Gr_G$ be the inclusion.
The functor
\[
(f_G)_*\colon \Perv_{L^+G}(\Gr_G^{\sm},\Qlb)\to \Perv_{L^+G}(\Gr_G,\Qlb)
\]
is fully faithful.
The essential image of $\scr{S}_G\circ (f_G)_*$ is the subcategory $\Rep(\check{G},\Qlb)_{\sm}$ of $\Rep(\check{G},\Qlb)$ consisting of representations whose $\check{T}$-weights are small.
Let $\bb{I}_{\check{G}}\colon \Rep(\check{G},\Qlb)_{\sm}\to \Rep(\check{G},\Qlb)$ be the inclusion.
There is a unique equivalence of categories
\[
\scr{S}_G^\sm\colon \Perv_{L^+G}(\Gr_G^{\sm},\Qlb)\to \Rep(\check{G},\Qlb)_{\sm}
\]
such that 
\[
\bb{I}_{\check{G}}\circ \scr{S}_{G}^\sm=\scr{S}_G\circ (f_G)_*.
\]

Let $f_G^{\circ}\colon \Gr^{\sm}\hookrightarrow \Gr^{\circ}$ and $\bb{I}^{\circ}_{\check{G}}\colon \Rep(\check{G},\Qlb)_{\sm}\to \Rep(\check{G},\Qlb)^{Z(\check{G})}$ be the inclusions.
Since $f_G=z_G\circ f_G^{\circ}$ and $\bb{I}_{\check{G}}=\mathbf{I}_{\check{G}}\circ \bb{I}^{\circ}_{\check{G}}$ hold, we obtain a canonical isomorphism
\begin{align}\label{eqn:IcircSGsm}
\bb{I}^{\circ}_{\check{G}}\circ \scr{S}_G^{\sm}\Iff \scr{S}_G^{\circ}\circ (f_G^{\circ})_*.
\end{align}
\subsection{Functor $\Phi_{\check{G}}$}
 For $V\in \Rep(\check{G},\Qlb)$, the Weyl group $W_G=N_{\check{G}}(\check{T})/\check{T}$ naturally acts on the zero weight space $V^{\check{T}}$.
Let $\varepsilon_{W_G}$ be the sign character of the Coxeter group $W_G$.
Then we obtain the functor 
\begin{align*}
\Phi_{\check{G}}\colon \Rep(\check{G},\Qlb)_{\sm}&\to \Rep(W_G,\Qlb),\\
V&\mapsto V^{\check{T}}\otimes \varepsilon_{W_G}.
\end{align*}
\subsection{Functor $\Psi_{G}$}\label{ssc:Psiexplain}
Let $v_F$ be the normalized valuation on $F$.
The functor $\Psi_{G}$ will not be defined unless $p$ is good for $G$ and $v_F(p)\geq C$ where $C=C_{\bar{G}}\in \bb{N}$ is a constant depending only on the isomorphism class of $\bar{G}$.

We first show that if $p$ is good for $G$ and $v_F(p)\geq C$, there are morphisms $j_G\colon \cal{M}_G\hookrightarrow \Gr^{\sm}_G$ and $\pi_G\colon \cal{M}_G\to \fk{g}$, where $j_G$ is an open immersion and $\pi_G$ is the perfection of a finite morphism.
These induce an exact functor
\begin{align} \label{eqn:defPhi}
\Psi_G:=(\pi_G)_*\circ (j_G)^!\colon \Perv_{L^+G}(\Gr^{\sm},\Qlb)\to \Perv_{\bar{G}}(\cal{N}_G^{p^{-\infty}},\Qlb).
\end{align}

If $\cal{O}=k[[t]]$ and $G=\bar{G}\otimes_k k[[t]]$, then we can define a non-perfect version of $\cal{M}_G$ by
\[
\cal{M}'_G:=\Gr^{\prime,\sm}_G\cap \Gr^-_{G,0}
\]
where $\Gr^-_{G,0}:=\bar{G}(k[t^{-1}])\cdot t^{0}\subset \Gr'_G$.
Put $\fk{G}=\rm{Ker}(\bar{G}(k[t^{-1}])\to \bar{G}(k))$. Then there is a homomorphism 
\begin{align}\label{eqn:isomfkGGr-}
\fk{G}\xrightarrow{\sim} \Gr^-_{G,0},\ g\mapsto g\cdot t^0.
\end{align}
On the other hand, the Lie algebra $\fk{g}$ is a kernel of the map $\bar{G}(k[t^{-1}]/t^{-2})\to \bar{G}(k)$.
Hence there is a canonical map
\[
\pi_G^{\dagger} \colon \Gr^-_{G,0}\to \fk{g}.
\]
\begin{prop}
In the above situation (i.e. $\cal{O}=k[[t]]$ and $p$ is good), we have $\pi_G^{\prime,\dagger}(\cal{M}'_G)\subset \cal{N}_G$.
Moreover, $\pi'_G:=\pi_G^{\prime,\dagger}|_{\cal{M}'_G}\colon \cal{M}'_G\to \cal{N}_G$ is a finite morphism.
\end{prop}
\begin{proof}
By the paragraph after \cite[Lemma 2.4]{AH}, we can assume that $G$ is a simply-connected and simple group.
Then, this can be proved by the same argument as \cite[Theorem 1.1]{AH}, using the assumption that $p$ is good.

The reason why the same argument as in \cite{AH} applies when $p$ is good is as follows:
\begin{enumerate}
\item The case where $\bar{G}$ is a classical group\\
In this case, there is a `standard' representation $\bar{G}\to GL_n$.
Let $\bar{G}'$ be its image. (Thus, $\bar{G}'=\bar{G}$ if $\bar{G}=SL_n$ or $Sp_n$, and $\bar{G}'=SO_n$ if $\bar{G}=Spin_n$.)
\begin{enumerate}
\item We standardly regard $\bb{X}_{\bullet}(T)$ as a subset of $\bb{Z}^n$, and write the subset of small cocharacters explicitly.
\item The $\bar{G}$-orbits of $\cal{N}_G$ are standardly parametrized by a certain set of partitions.
\item By (\ref{eqn:isomfkGGr-}) and since an isogeny of groups induces an isomorphism of $\fk{G}$, we have
\[
\Gr_{G,0}^-\cong \fk{G}\cong \fk{G}':=\rm{Ker}(\bar{G}'(k[t^{-1}])\to \bar{G}'(k))\subset GL_n(k[t^{-1}]).
\]
Therefore, we can identify $\Gr_{G,0}^-$ with a subset of $GL_n(k[t^{-1}])$.
Then, the map $\pi^{\dagger}_{G}\colon \Gr_{G,0}^-\to \fk{g}\subset \fk{gl}_n(k)$ is the restriction of
\[
GL_n(k[t^{-1}]) \ni x_0+x_1t^{-1}+x_2t^{-2}+\cdots \mapsto x_{1}\in \fk{gl}_n(k), 
\]
where $x_i\in \fk{gl}_n(k)$.
\item Using these identifications and concrete matrix calculations, we can explicitly write 
\[
\cal{M}_{\mu}:=\cal{M}'_{G}\cap \Gr_{G,\mu}
\]
as a set of matrices for each small $\mu$.
\item From these descriptions, we can identify which nilpotent orbits correspond to $\pi^{\dagger}_G(\cal{M}_{\mu})$ (see \cite[Table 2]{AH}), and we find some geometric properties of  $\pi^{\dagger}_G|_{M_\mu}$.
\item In particular, we have $\pi_G^{\dagger}(\cal{M}_G)\subset \cal{N}_G$, and the unique maximal nilpotent orbit $C$ in $\pi^{\dagger}(\cal{M}_G)$ satisfies that $D:=(\pi_G)^{-1}(C)\subset \cal{M}_G$ is dense open and that $\pi_{G}|_{D}$ is finite.
Therefore, $\pi_G$ is finite by \cite[Lemma 2.6]{AH}, since every nilpotent orbit has even dimension.
\end{enumerate}
These calculations are valid for any $p$ if $G=SL_n$ and for $p>3$ if $G=Sp_n$ or $Spin_n$.
\item The case where $G$ is of type $E_6,E_7$ or $E_8$.
Considering a connected sub-diagram of the extended Dynkin diagram of $G$, we have a large simple subgroup $H\subset G$ of classical type.
Then, the proof proceeds as follows:
\begin{enumerate}
\item We write every small cocharacter of $G$ as a linear combination of the fundamental coweights.
\item We label each $\bar{G}$-orbit of $\cal{N}_G$ by the smallest Levi subalgebra that it meets via the Bala--Carter classification, which is available only for good $p$.
\item For each small cocharacter $\lambda$ of $G$, find the small cocharacters $\mu$ of $H$ such that $\Gr_{H,\mu}\subset \Gr_{G,\lambda}$.
These can be found in a combinatorial way.
\item For each $\mu$, find the nilpotent orbits in $\cal{N}_H$ contained in $\pi_H(\cal{M}_{H,\mu})$, by using the result for classical types.
\item For each such nilpotent orbit in $\cal{N}_H$, compute the Bala--Carter label of its $\bar{G}$-saturation in $\cal{N}_G$.
This can be done by using the procedure in the classical types for converting from a partition-type label to a Bala--Carter label, which is written in \cite[\S 6]{BC}.
\item We get the correspondence between small coweights and Bala--Carter labels so far.
We know the dimension of each Schubert cell and each nilpotent orbit.
See \cite[Table 3]{AH}.
\item Let $C$ be the unique maximal nilpotent orbit appearing in this argument.
Then by \cite[Table 3]{AH}, $D:=(\pi_{G}^{\dagger}|_{\cal{M}_G})^{-1}(C)$ is contained in the two disjoint maximal Schubert cells in $\Gr_{G}^{\sm}$ in types $E_6$, and contained in the unique maximal Schubert cell in $\Gr_{G}^{\sm}$ in types $E_7$ or $E_8$.
Choose a maximal Schubert cell $\Gr_{G,\lambda_0}$ in $\Gr_{G}^{\sm}$ and let $\mu_0$ denote the unique cocharacter of $H$ such that $\Gr_{H,\mu_0}\subset \Gr_{G,\lambda_0}$ and $C\subset G\cdot \pi_{H}(\cal{M}_{H,\mu_0})$.
By calculation, we have $\dim \Gr_{G,\lambda_0}=\dim C$.
This implies that the dimension of $\cal{M}_{\lambda_0} \cap D$ agrees with $\dim \Gr_{G,\lambda_0}=\dim C$.
The map 
\[
\pi^{\dagger}_{G}|_{\cal{M}_{\lambda_0} \cap D}\colon \cal{M}_{\lambda_0} \cap D\to C
\]
is a base change of the $G$-equivariant map $\pi^{\dagger}_{G}|_{\cal{M}_{\lambda_0}}\colon \cal{M}_{\lambda_0}\to \fk{g}$ to a $G$-orbit $C$ with $\dim \cal{M}_{\lambda_0} =\dim (\cal{M}_{\lambda_0}\cap D)=\dim C$.
Since $\cal{M}_{\lambda_0}$ is irreducible, it follows that $\cal{M}_{\lambda_0} \cap D$ is a $G$-orbit, and that $\cal{M}_{\lambda_0} \cap D\subset \cal{M}_{\lambda_0}$ is open dense.
Therefore, $D\subset \cal{M}_G$ is open dense, and in particular  $\pi_G^{\dagger}(\cal{M}_G)\subset \ol{\pi_G^{\dagger}(D)}\subset \ol{C}\subset \cal{N}_G$.

Moreover, since $\pi^{\dagger}_{G}|_{\cal{M}_{\lambda_0} \cap D}$ is a $G$-equivariant surjection between $G$-orbits of the same dimension, it is (not necessarily \'{e}tale in positive characteristic but) finite.
Therefore, $\pi_G|_D\colon D\to C$ is finite.
This implies that $\pi_{G}$ is finite by \cite[Lemma 2.6]{AH}, since every nilpotent orbit has even dimension.
\end{enumerate}
This argument is valid for good $p$.
\item The case where $G$ is of type $F_4$ or $G_2$.\\
In this case, $G$ is the set of fixed points of
an automorphism $\sigma$ of some larger simply-connected simple algebraic group $H$ of simply-laced type, where $\sigma$ comes from an automorphism of the Dynkin diagram.
Choose a $\sigma$-stable maximal torus $T_H\subset H$ such that $T=T_H^{\sigma}$.
Then $\bb{X}_{\bullet}^+(T)$ is 
\begin{enumerate}
\item For each small $\lambda\in \bb{X}_{\bullet}^+(T)$, find a $\mu\in \bb{X}_{\bullet}^+(T_H)$ such that  $\Gr_{G,\lambda}\subset \Gr_{H,\mu}$.
With a suitable choice of positive system, $\mu$ agrees with $\lambda\in \bb{X}_{\bullet}^+(T)\subset \bb{X}_{\bullet}^+(T_H)$.
By inspection, we observe that $\mu$ is small for $H$.
\item We have $\Gr_{G}^{\sm}\subset \Gr_{H}^{\sm}$, and it follows from \cite[Lemma 2.5]{AH} that $\cal{M}_G\subset \cal{M}_H$ and 
\[
\pi^{\dagger}_G(\cal{M}_G)\subset \pi^{\dagger}_H(\cal{M}_H)\cap \fk{g}\subset \cal{N}_H\cap \fk{g}=\cal{N}_G.
\]
Moreover, since $\pi_H$ is finite, so is $\pi_G$.
\end{enumerate}
This argument is valid for good $p$.
\end{enumerate}
\end{proof}
Put $\cal{M}_G:=(\cal{M}'_G)^{p^{-\infty}}$ and $\pi_G:=(\pi'_G)^{p^{-\infty}}$.
By the $G(k)$-equivariance of $j_{G}\colon \cal{M}_G\hookrightarrow \Gr^{\sm}_G$ and $\pi_G$, we obtain an exact functor $\Psi_G$ as in (\ref{eqn:defPhi}).

For the mixed characteristic case, we need to show the following proposition:
\begin{prop}
Let $G$ be a reductive group over $\cal{O}$.
Put $G^{\flat}:=\bar{G}\otimes_k k[[t]]$.
There is a constant $C=C_{\bar{G}}\in \bb{N}$ depending only on $\bar{G}$ such that if $v_F(p)\geq C$, then there is a canonical isomorphism 
\[
\Gr^\sm_{G}\cong \Gr^{\sm}_{G^{\flat}}.
\]
which is $L^{C}G(\cong L^{C}G^{\flat})$-equivariant.
\end{prop}
\begin{proof}
Since the number of coweights which is small is finite and $\Gr_G^{\sm}$ is closed in $\Gr_G$, we can write
\[
\Gr_G^{\sm}=\Gr_{G,\leq \mu_1}\cup \cdots \cup \Gr_{G,\leq \mu_r}
\]
for some $\mu_1,\ldots,\mu_r$.
Then Theorem \ref{thm:Grisomgeneral} proves the claim.
Note that the isomorphisms in \ref{thm:Grisomgeneral} agree on the overlaps $\Gr_{G,\leq \mu_i}\cap \Gr_{G,\leq \mu_j}$ by the canonicity condition \ref{item:canonicity:functorial}.
\end{proof}
Now we can construct $\cal{M}_G$ and $\pi_G$ as pullbacks of $\cal{M}_{G^{\flat}}$ and $\pi_{G^{\flat}}$ along this isomorphism. 
Now we obtain an exact functor $\Psi_G$ as in (\ref{eqn:defPhi}).
\subsection{Statement of main theorem}
We have the diagram
\[
\xymatrix{
\Perv_{L^+G}(\Gr^{\sm},\Qlb)\ar[r]^-{\scr{S}^\sm_G}\ar[d]_-{\Psi_G}&\Rep(\check{G},\Qlb)_{\sm}\ar[d]^-{\Phi_{\check{G}}}\\
\Perv_{\bar{G}}(\cal{N}^{p^{-\infty}},\Qlb)\ar[r]_-{\mathbb{S}_G}&\Rep(W_G,\Qlb).\\
}
\]

The main theorem is the following:
\begin{thm}\label{thm:main}
Assume that $p$ is good for $G$.
There exists a constant $C_{\bar{G}}$ depending only on $\bar{G}$ such that the following holds:
If $v_F(p)>C_{\bar{G}}$, there is a canonical isomorphism of functors:
\[
\Phi_{\check{G}} \circ \scr{S}^\sm_G \Iff \mathbb{S}_G \circ \Psi_G.
\]
\end{thm}
\subsection{Plan of proof}\label{scn:plan}
The plan is the same as \cite[\S 3]{AHR}.
We have to construct the isomorphism
\[
\alpha_G\colon \Phi_{\check{G}} \circ \scr{S}^\sm_G \Iff \mathbb{S}_G \circ \Psi_G, 
\]
as in the main theorem.

First, we will construct certain \textit{restriction functors} to a Levi subgroup $L$:
\begin{align*}
\fk{R}^G_L&\colon \Perv_{L^+G}(\Gr^{\sm}_G,\Qlb)\to \Perv_{L^+L}(\Gr^{\sm}_L,\Qlb)\\
\rm{R}^{\check{G}}_{\check{L}}&\colon \Rep(\check{G},\Qlb)_{\sm}\to \Rep(\check{L},\Qlb)_{\sm},\\
\cal{R}^G_L&\colon \Perv_{\bar{G}}(\cal{N}_G^{p^{-\infty}},\Qlb)\to \Perv_{\bar{L}}(\cal{N}_L^{p^{-\infty}},\Qlb),\\
\sf{R}^{W_G}_{W_L}&\colon \Rep(W_G,\Qlb)\to \Rep(W_L,\Qlb).
\end{align*}
The functor $\sf{R}^{W_G}_{W_L}$ is the obvious restriction functor.
The other functors will be defined later.

Next, we will define natural isomorphisms called \textit{transitivity isomorphisms}
\begin{align*}
\fk{R}^G_T&\Iff \fk{R}^L_T\circ \fk{R}^G_L,\\
\rm{R}^{\check{G}}_{\check{T}}&\Iff\rm{R}^{\check{L}}_{\check{T}}\circ \rm{R}^{\check{G}}_{\check{L}},\\
\cal{R}^G_T&\Iff \cal{R}^L_T\circ \cal{R}^G_L,\\
\sf{R}^{W_G}_{W_T}&\Iff \sf{R}^{W_L}_{W_T}\circ \sf{R}^{W_G}_{W_L}
\end{align*}
and \textit{intertwining isomorphisms}
\begin{align*}
\sf{R}^{W_G}_{W_L}\circ \Phi_{\check{G}}&\Iff \Phi_{\check{L}}\circ \rm{R}^{\check{G}}_{\check{L}},\\
\cal{R}^G_L\circ \Psi_{G}&\Iff \Psi_{L}\circ \fk{R}^G_L,\\
\rm{R}^{\check{G}}_{\check{L}}\circ \scr{S}^\sm_G&\Iff \scr{S}^\sm_L\circ \fk{R}^G_L,\\
\sf{R}^{W_G}_{W_L}\circ \bb{S}_G &\Iff \bb{S}_L\circ \cal{R}^G_L
\end{align*}
such that the following prisms are commutative:
\begin{align*}
&\vcenter{
\prism{\Rep(\check{G},\Qlb)_{\sm}}{\Phi_{\check{G}}}{\Rep(W_G,\Qlb)}
{\rm{R}^{\check{G}}_{\check{L}}}{(Intw)}{\sf{R}^{W_G}_{W_L}}
{(Tr)}{\Rep(\check{L},\Qlb)_{\sm}}{\Phi_{\check{L}}}{\Rep(W_L,\Qlb)}
{\rm{R}^{\check{L}}_{\check{T}}}{(Intw)}{\sf{R}^{W_L}_{W_T}}
{\Rep(\check{T},\Qlb)_{\sm}}{\Phi_{\check{T}}}{\Rep(W_T,\Qlb),}
{\rm{R}^{\check{G}}_{\check{T}}}{(Intw)}{\sf{R}^{W_G}_{W_T}}{(Tr)}
}
\\
&\vcenter{
\prism{\Perv_{L^+G}(\Gr^{\sm}_G,\Qlb)}{\Psi_{G}}{\Perv_{\bar{G}}(\cal{N}_G^{p^{-\infty}},\Qlb)}
{\fk{R}^G_L}{(Intw)}{\cal{R}^G_L}
{(Tr)}{\Perv_{L^+L}(\Gr^{\sm}_L,\Qlb)}{\Psi_{L}}{\Perv_{\bar{L}}(\cal{N}_L^{p^{-\infty}},\Qlb)}
{\fk{R}^L_T}{(Intw)}{\cal{R}^L_T}
{\Perv_{L^+T}(\Gr^{\sm}_T,\Qlb)}{\Psi_{T}}{\Perv_{\bar{T}}(\cal{N}_T^{p^{-\infty}},\Qlb),}
{\fk{R}^G_T}{(Intw)}{\cal{R}^G_T}{(Tr)}
}
\\
&\vcenter{
\prism{\Perv_{L^+G}(\Gr^{\sm}_G,\Qlb)}{\scr{S}^{\sm}_{G}}{\Rep(\check{G},\Qlb)_{\sm}}
{\fk{R}^G_L}{(Intw)}{\rm{R}^{\check{G}}_{\check{L}}}
{(Tr)}{\Perv_{L^+L}(\Gr^{\sm}_L,\Qlb)}{\scr{S}^{\sm}_{L}}{\Rep(\check{L},\Qlb)_{\sm}}
{\fk{R}^L_T}{(Intw)}{\rm{R}^{\check{L}}_{\check{T}}}
{\Perv_{L^+T}(\Gr^{\sm}_T,\Qlb)}{\scr{S}^{\sm}_{T}}{\Rep(\check{T},\Qlb)_{\sm},}
{\fk{R}^G_T}{(Intw)}{\rm{R}^{\check{G}}_{\check{T}}}{(Tr)}
}
\\
&\vcenter{
\prism{\Perv_{\bar{G}}(\cal{N}_G^{p^{-\infty}},\Qlb)}{\bb{S}_G}{\Rep(W_G,\Qlb)}
{\cal{R}^G_L}{(Intw)}{\sf{R}^{W_G}_{W_L}}
{(Tr)}{\Perv_{\bar{L}}(\cal{N}_L^{p^{-\infty}},\Qlb)}{\bb{S}_L}{\Rep(W_L,\Qlb)}
{\cal{R}^G_L}{(Intw)}{\sf{R}^{W_L}_{W_T}}
{\Perv_{\bar{T}}(\cal{N}_T^{p^{-\infty}},\Qlb)}{\bb{S}_T}{\Rep(W_T,\Qlb)}
{\cal{R}^G_L}{(Intw)}{\sf{R}^{W_G}_{W_T}}{(Tr)}
}
\end{align*}
where (Tr) and (Intw) mean the transitivity isomorphisms and the intertwining isomorphisms, respectively.
For the intertwining isomorphisms and commutativities of prisms, the same argument as in \cite[\S 5,6,7]{AHR} can be used. 

And then, we will construct the isomorphism in the case where $G=T$ or $G$ is semisimple of rank 1.
If $G$ is general, then we have the isomorphism
\begin{align*}
\phi_{G,T}\colon \sf{R}^{W_G}_{W_T}\circ \Phi_{\check{G}} \circ \scr{S}^\sm_G&\overset{\text{(Intw)}}{\Iff}\Phi_{\check{T}} \circ \rm{R}^{\check{G}}_{\check{T}}\circ \scr{S}^\sm_G\\
&\overset{\text{(Intw)}}{\Iff}\Phi_{\check{T}} \circ \scr{S}^\sm_T \circ \fk{R}^G_T\\
&\overset{\alpha_T}{\Iff} \mathbb{S}_T \circ \Psi_T\circ \fk{R}^G_T\\
&\overset{\text{(Intw)}}{\Iff}\mathbb{S}_T \circ \cal{R}^G_T \circ\Psi_G\\
&\overset{\text{(Intw)}}{\Iff}\sf{R}^{W_G}_{W_T}\circ \mathbb{S}_G \circ\Psi_G
\end{align*}
for a maximal torus $T$,
and similarly, 
\begin{align*}
\phi_{G,L}\colon \sf{R}^{W_G}_{W_L}\circ \Phi_{\check{G}} \circ \scr{S}^\sm_G\Iff \sf{R}^{W_G}_{W_L}\circ \mathbb{S}_G \circ\Psi_G
\end{align*}
for any Levi subgroup $L$ that is semisimple of rank 1.
The commutativity of the prisms implies the equality
\[
\sf{R}^{W_L}_{W_T}\phi_{G,L}=\phi_{G,T}.
\]

This means that for any $M\in \Perv_{L^+G}(\Gr^{\sm}_G,\Qlb)$, there is an isomorphism 
\begin{align}\label{eqn:mainisomfor}
\sf{R}^{W_G}_{W_T}\circ \Phi_{\check{G}} \circ \scr{S}^\sm_G(M)\overset{\phi_{G,T}}{\cong} \sf{R}^{W_G}_{W_T}\circ \mathbb{S}_G \circ\Psi_G(M)
\end{align}
which is $W_L$-equivariant for any Levi subgroup $L$ which is semisimple of rank 1.
It follows that (\ref{eqn:mainisomfor}) is $W_G$-equivariant since $W_L$'s generate $W_G$.
\subsection{Definition of restriction functors and transitivity isomorphisms}\label{scn:ResTr}
 For a Levi subgroup $L$ in $G$, we want to define the restriction functors
\begin{align*}
\fk{R}^G_L&\colon \Perv_{L^+G}(\Gr_G^\sm,\Qlb)\to \Perv_{L^+L}(\Gr_L^\sm,\Qlb),\\
\rm{R}^{\check{G}}_{\check{L}}&\colon \Rep(\check{G},\Qlb)_{\mathrm{sm}}\to \Rep(\check{L},\Qlb)_{\mathrm{sm}},\\
\cal{R}^G_L&\colon \Perv_{\bar{G}}(\cal{N}_{G}^{p^{-\infty}},\Qlb)\to \Perv_{\bar{L}}(\cal{N}_{L}^{p^{-\infty}},\Qlb),\\
\sf{R}^{W_G}_{W_L}&\colon \Rep(W_G,\Qlb)\to \Rep(W_L,\Qlb)
\end{align*}
and the natural isomorphisms called transitivity isomorphisms
\begin{align*}
\fk{R}^G_T &\Iff \fk{R}^L_T\circ \fk{R}^G_L,& \rm{R}^{\check{G}}_{\check{T}} &\Iff \rm{R}^{\check{L}}_{\check{T}}\circ \rm{R}^{\check{G}}_{\check{L}},\\
\cal{R}^G_T &\Iff \cal{R}^L_T\circ \cal{R}^G_L,&\sf{R}^{W_G}_{W_T} &\Iff \sf{R}^{W_L}_{W_T}\circ \sf{R}^{W_G}_{W_L}.
\end{align*}
\subsubsection{Category $\Rep(W_G,\Qlb)$}
 As $W_L$ can be regarded as a subgroup of $W_G$, we can define a restriction functor as the usual restriction of group action.
Furthermore, the transitivity isomorphism is simply the identity morphism.
\subsubsection{Category $\Perv_{L^+G}(\Gr^\sm_G,\Qlb)$}\label{ssc:ResSat}
Before we define the restriction functor on $\Perv_{L^+G}(\Gr^\sm_G,\Qlb)$, we define the restriction functor
\[
\overline{\fk{R}}^G_L\colon \Perv_{L^+G}(\Gr_G,\Qlb)\to \Perv_{L^+L}(\Gr_L,\Qlb).
\]
\subsubsection*{Restriction functor on $\Perv_{L^+G}(\Gr_G,\Qlb)$}\label{ssc:defofRGLbar}
There is a diagram of algebraic groups 
\begin{align}\label{LPG}
\xymatrix{
L&P \ar@{->>}[l]\ar@{^{(}->}[r]&G
}
\end{align}
where the first morphism is the natural projection, and the second is the inclusion.
This induces the diagram of affine Grassmannians
\[
\xymatrix{
\Gr_L &\Gr_P \ar[l]_-{q_P}\ar[r]^-{i_P}&\Gr_G
}
\]
where $q_P$ induces a bijection between the set of connected components.
First, define a functor $\widetilde{\fk{R}}^G_L\colon D^b(\Gr_G,\Qlb)\to D^b(\Gr_L,\Qlb)$ as the composition
\[
\xymatrix{
D^b(\Gr_G,\Qlb)\ar[r]^{(i_P)^!}&D^b(\Gr_P,\Qlb)\ar[r]^{(q_P)_*}& D^b(\Gr_L,\Qlb).
}
\]
We also define its equivariant version $\dwt{\fk{R}}^G_L\colon D^b_{L^+G}(\Gr_G,\Qlb)\to D^b_{L^+L}(\Gr_L,\Qlb)$ as the composition
\[
\xymatrix{
D^b_{L^+G}(\Gr_G)\ar[r]^{\sf{For}^{L^+G}_{L^+P}}
&D^b_{L^+P}(\Gr_G)\ar[r]^{(i_P)^!}
&D^b_{L^+P}(\Gr_P)\ar[r]^{(q_P)_*}
&D^b_{L^+P}(\Gr_L)\ar[r]^{\sf{For}^{L^+P}_{L^+L}}
&D^b_{L^+L}(\Gr_L).
}
\]
There is an isomorphism 
\begin{align}\label{eqn:ForSatResCompatible}
\wt{\fk{R}}^G_L\circ \sf{For}^{L^+G} \Iff \sf{For}^{L^+L}\circ \dwt{\fk{R}}^G_L
\end{align}
defined by 
\[
\xymatrix@R=30pt{
D^b_{L^+G}(\Gr_G)\ar[r]^{\sf{For}^{L^+G}_{L^+P}}\ar[rd]_{\sf{For}}&D^b_{L^+P}(\Gr_G)\ar[r]^{(i_P)^!}\ar[d]^{\sf{For}}\ar@{}[dl]|<<<<{\wong{(Tr)}}&D^b_{L^+P}(\Gr_P)\ar[r]^{(q_P)_*}\ar[d]^{\sf{For}}\ar@{}[dl]|{\wong{(For)}}
&D^b_{L^+P}(\Gr_L)\ar[r]^{\sf{For}^{L^+P}_{L^+L}}\ar[d]^{\sf{For}}\ar@{}[dl]|{\wong{(For)}}\ar@{}[rd]|<<<<{\wong{(Tr)}}&D^b_{L^+L}(\Gr_L)\ar[ld]^{\sf{For}}
\\
&D^b(\Gr_G)\ar[r]^{(i_P)^!}&D^b(\Gr_P)\ar[r]^{(q_P)_*}&D^b(\Gr_L)&
}
\]
We want to consider the transitivity for the functors $\widetilde{\fk{R}}^G_L$ and $\dwt{\fk{R}}^G_L$.
For that, notice the following cartesian squares:
\begin{lemm}\label{lem:Grcartesian}
The commutative squares
\begin{align}
\vcenter{
\xymatrix{
\Gr_B\ar[r]\ar[d]&\Gr_P\ar[d]\\
\Gr_{B_L}\ar[r]&\Gr_L,
}
}\label{square:Grcartesian}\\
\vcenter{
\xymatrix{
\Gr_B^\circ\ar[r]\ar[d]&\Gr_P^\circ\ar[d]\\
\Gr_{B_L}^\circ\ar[r]&\Gr_L^\circ
}
}\label{Grcirccartesian}
\end{align}
are cartesian.
\end{lemm}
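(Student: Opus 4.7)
The plan is to first observe that the underlying square of $\cal{O}$-group schemes, obtained by removing the $\Gr$ from each corner of (\ref{square:Grcartesian}), is already cartesian, then promote this to (\ref{square:Grcartesian}) via the loop functors, and finally deduce (\ref{Grcirccartesian}) by tracking the semi-infinite orbits $S_\lambda$.

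For the group-theoretic statement, recall that $U = U_L \cdot U_P$ with $U_P$ normal in $U$ (since $U_P$ is normal in $P \supset U$), so $P = L \ltimes U_P$ and $B = B_L \ltimes U_P$. The preimage of $B_L \subset L$ under the Levi projection $q \colon P \twoheadrightarrow L$ is therefore $B_L \ltimes U_P = B$, giving an isomorphism of $\cal{O}$-group schemes $B \cong P \times_L B_L$. Applying the loop functors, which commute with fiber products of affine group schemes (they are representable by $H(\cal{O}_R)$ and $H(\cal{O}_R[1/\varpi])$ and hence preserve all limits in $H$), yields $LB \cong LP \times_{LL} LB_L$ and $L^+B \cong L^+P \times_{L^+L} L^+B_L$.

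To promote this to affine Grassmannians, I consider the canonical map $\Gr_B \to \Gr_P \times_{\Gr_L} \Gr_{B_L}$ and verify it is an isomorphism on $R$-points fppf-locally. Given a compatible pair $(\bar p, \bar b_L)$, one lifts, after an étale cover $R'$ of $R$, to representatives $p \in LP(R')$ and $b_L \in LB_L(R')$. After further localization there exists $\ell \in L^+L(R')$ with $q(p) \cdot \ell = b_L$ in $LL(R')$, and since $L^+P \to L^+L$ is fppf-locally surjective (by smoothness of $q$), $\ell$ lifts to $\tilde\ell \in L^+P(R')$. Then $p \cdot \tilde\ell \in LP(R')$ has image $b_L$ in $LL(R')$, hence lies in $LB(R')$ by the cartesian identity for loop groups, and its class in $\Gr_B(R')$ furnishes the required lift; independence of the various choices follows from the analogous identity for $L^+B$, so fppf descent returns an honest point of $\Gr_B(R)$.

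For (\ref{Grcirccartesian}), I observe that $\Gr_B \to \Gr_{B_L}$ sends $S_\lambda$ into $S_{L,\lambda}$ for every $\lambda \in \bb{X}_\bullet(T)$, because the induced map $LU \twoheadrightarrow LU_L$ carries $u \cdot \varpi^\lambda$ to its $U_L$-component times $\varpi^\lambda$. In particular $\Gr_B^\circ = S_0$ maps into $\Gr_{B_L}^\circ = S_{L,0}$ and into the component $\Gr_P^\circ$ of $\varpi^0$. Conversely, any $(x, s_L) \in \Gr_P^\circ \times_{\Gr_L^\circ} \Gr_{B_L}^\circ$ lifts uniquely to some $b \in \Gr_B$ by the first square, and writing $b \in S_\lambda$, the condition $s_L \in S_{L,0}$ forces $\lambda = 0$, so $b \in \Gr_B^\circ$. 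The hard part of the argument is the lifting/descent step for (\ref{square:Grcartesian}): the fppf-quotient presentation $\Gr_H = [LH/L^+H]$ interacts with fiber products only after invoking fppf-local surjectivity of $L^+P \to L^+L$, which itself rests on the smoothness of $q \colon P \to L$.
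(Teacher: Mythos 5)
Your proof is correct, but it takes a genuinely different route from the paper's. The paper first uses translation equivariance (multiplying by $\varpi^\lambda$) to reduce both squares to the single mixed square with $\Gr_B^\circ$, $\Gr_P$, $\Gr_{B_L}^\circ$, $\Gr_L$, then observes that $\Gr_B^\circ = \Gr_U$ and $\Gr_{B_L}^\circ = \Gr_{U_{B_L}}$ sit inside $\Gr_P$ and $\Gr_L$ as locally closed embeddings, and finally reduces the cartesian property to the set-theoretic identity $a(\Gr_U) = q_P^{-1}(b(\Gr_{U_{B_L}}))$, stated as an easy check. You instead start from the scheme-theoretic cartesian square of groups $B = P \times_L B_L$ over $\cal{O}$ (coming from $B = B_L \ltimes U_P$), push it through the loop functors (which preserve fiber products), and then verify directly that passage to $\Gr_H = [LH/L^+H]$ preserves this fiber product via an fppf lifting-and-descent argument; you handle the $\circ$ version (\ref{Grcirccartesian}) separately by tracking semi-infinite orbits. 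Your approach is more self-contained (for (\ref{square:Grcartesian}) it avoids the structure theory of semi-infinite orbits and locally closed embeddings altogether), at the cost of a longer descent argument for how fiber products interact with the quotient $LH/L^+H$. One small simplification you could make: since $q_P \colon P \twoheadrightarrow L$ has a group-theoretic section (the Levi inclusion $L \hookrightarrow P$), the map $L^+P \to L^+L$ is actually surjective on $R$-points for every $R$, so you do not need fppf-local surjectivity or smoothness of $q_P$ for the lifting of $\ell$; this removes one layer of localization from your argument.
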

\begin{proof}
It suffices to show that
\[
\xymatrix{
\Gr_B^\circ \ar[r]^a\ar[d]&\Gr_P\ar[d]^{q_P}\\
\Gr_{B_L}^\circ \ar[r]_b&\Gr_L
}
\]
is cartesian because of the equivariance of the morphisms.
Recall that the morphisms $a,b$ are locally closed embeddings and that $\Gr_B^{\circ}=\Gr_U$ and $\Gr_{B_L}^{\circ}=\Gr_{U_{B_L}}$ hold.
Hence it remains to prove
\[
a(\Gr_U)= \{x \in \Gr_P\mid q_P(x)\in b(\Gr_{U_{B_L}})\},
\]
but it can be easily checked.
\end{proof}
Thus we obtain the following diagram:
\begin{align}\label{diagram:GrGPLBBLT}
\vcenter{
\xymatrix{
\Gr_G\ar@{<-}[r]\ar@{<-}[rd]&\Gr_P\ar[r]\ar@{<-}[d]\ar@{}[rd]|{\square}&\Gr_L\ar@{<-}[d]\\
&\Gr_B\ar[r]\ar[rd]&\Gr_{B_L}\ar[d]\\
&&\Gr_T.
}
}
\end{align}
\begin{cor} \label{Sattransderivedtilde}
There is a natural isomorphism
\begin{align}
\widetilde{\fk{R}}^G_T\Iff \widetilde{\fk{R}}^L_T\circ \widetilde{\fk{R}}^G_L&\colon D^b(\Gr_G,\Qlb)\to D^b(\Gr_T,\Qlb),\label{eqn:Sattransderivedtilde}\\
\dwt{\fk{R}}^G_T\Iff \dwt{\fk{R}}^L_T\circ \dwt{\fk{R}}^G_L&\colon D^b_{L^+G}(\Gr_G,\Qlb)\to D^b_{L^+T}(\Gr_T,\Qlb).\label{eqn:Sattransderivedtildeequiv}
\end{align}
\end{cor}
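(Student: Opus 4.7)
The plan is to derive the transitivity from a base-change identity on the cartesian square (\ref{square:Grcartesian}), combined with the composition laws for $(-)_*$ and $(-)^!$.

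Label the evident maps in the expanded diagram (\ref{diagram:GrGPLBBLT}) as $i_P\colon \Gr_P\to \Gr_G$, $q_P\colon \Gr_P\to \Gr_L$, $i_{B_L}\colon \Gr_{B_L}\to \Gr_L$, $q_{B_L}\colon \Gr_{B_L}\to \Gr_T$, $i_B\colon \Gr_B\to \Gr_G$, $q_B\colon \Gr_B\to \Gr_T$, and write $i_{BP}\colon \Gr_B\to \Gr_P$, $q_{BP}\colon \Gr_B\to \Gr_{B_L}$ for the induced maps from (\ref{square:Grcartesian}); then $i_B=i_P\circ i_{BP}$ and $q_B=q_{B_L}\circ q_{BP}$. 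I would compute
\begin{align*}
\widetilde{\fk{R}}^L_T\circ \widetilde{\fk{R}}^G_L
&=(q_{B_L})_*\circ i_{B_L}^!\circ (q_P)_*\circ i_P^! \\
&\cong (q_{B_L})_*\circ (q_{BP})_*\circ i_{BP}^!\circ i_P^! \\
&=(q_B)_*\circ i_B^! \;=\; \widetilde{\fk{R}}^G_T,
\end{align*}
where the middle step applies the base-change isomorphism $i_{B_L}^!\circ (q_P)_*\cong (q_{BP})_*\circ i_{BP}^!$ attached to the cartesian square (\ref{square:Grcartesian}), and the last step uses the composition laws for $(-)^!$ and $(-)_*$. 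This produces (\ref{eqn:Sattransderivedtilde}).

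For the equivariant statement (\ref{eqn:Sattransderivedtildeequiv}) I would run the same argument in the equivariant derived category: the square (\ref{square:Grcartesian}) is naturally equivariant for all the relevant group actions, so equivariant base change applies, and the forgetful functors inserted in the definition of $\dwt{\fk{R}}$ compose sensibly along the two chains $L^+G\to L^+P\to L^+L\to L^+B_L\to L^+T$ and $L^+G\to L^+B\to L^+T$. Transitivity of forgetful functors together with (\ref{eqn:ForSatResCompatible}) then guarantees that the equivariant transitivity is compatible with the non-equivariant one after forgetting, so that (\ref{eqn:Sattransderivedtildeequiv}) is well-defined.

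The main obstacle I anticipate is verifying the base-change isomorphism $i_{B_L}^!\circ (q_P)_*\cong (q_{BP})_*\circ i_{BP}^!$ in the present ind-(perfect-)scheme setting, since $\Gr_P$ is an ind-scheme and $q_P$ is not proper. However, $i_{B_L}$ is a locally closed immersion, and after restricting along Schubert-type bounded pieces of $\Gr_P$ and $\Gr_L$ the identity reduces to standard base change for the six-functor formalism on finite-type perfect schemes; the equivariant version then follows by a compatible reduction.
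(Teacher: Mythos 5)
Your proposal is correct and follows essentially the same path as the paper: the non-equivariant transitivity comes from pasting the composition isomorphism $i_B^{!}\cong i_{BP}^{!}\circ i_P^{!}$, the base-change isomorphism for the cartesian square (\ref{square:Grcartesian}), and the composition isomorphism $ (q_B)_*\cong (q_{B_L})_*\circ (q_{BP})_*$, and the equivariant version is obtained by decorating the same pasting diagram with forgetful functors along $L^+G\to L^+P\to L^+L\to L^+B_L\to L^+T$ and $L^+G\to L^+B\to L^+T$, using transitivity for $\sf{For}$ and compatibility with (\ref{eqn:ForSatResCompatible}). Your remark about reducing the base-change step to bounded Schubert pieces (since $\Gr_P\to\Gr_L$ is not proper) is a reasonable way to justify it, and is implicitly assumed in the paper's blanket use of (BC) in \S\ref{ssc:a:basechange}.
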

\begin{proof}
We can define these isomorphisms by the following pasting diagrams:
\[
\xymatrix@R=30pt{
D^b(\Gr_G)\ar[r]^{(\cdot)^!}\ar[rd]_{(\cdot)^!}&D^b(\Gr_P)\ar[r]^{(\cdot)_*}\ar[d]^{(\cdot)^!}\ar@{}[dl]|<<<<{\wong{(Co)}}&D^b(\Gr_L)\ar[d]^{(\cdot)^!}\ar@{}[dl]|{\wong{(BC)}}\\
&D^b(\Gr_B)\ar[r]^{(\cdot)_*}\ar[rd]_{(\cdot)_*}&D^b(\Gr_{B_L})\ar[d]^{(\cdot)_*}\ar@{}[dl]|<<<<{\wong{(Co)}}\\
&&D^b(\Gr_T),
}
\]
{\footnotesize
\[
\xymatrix@R=30pt{
D^b_{L^+G}(\Gr_G)\ar[r]^{\sf{For}^{L^+G}_{L^+P}}\ar[rd]_{\sf{For}^{L^+G}_{L^+B}}
&\ar@{}[dl]|<<<<{\wong{(Tr)}}\ar@{}[rd]|{\wong{(For)}}D^b_{L^+P}(\Gr_G)\ar[r]^{(\cdot)^!}\ar[d]^{\sf{For}^{L^+P}_{L^+B}}
&D^b_{L^+P}(\Gr_P)\ar[r]^{(\cdot)_*}\ar[d]^{\sf{For}^{L^+P}_{L^+B}}\ar@{}[rd]|{\wong{(For)}}
&D^b_{L^+P}(\Gr_L)\ar[r]^{\sf{For}^{L^+P}_{L^+L}}\ar[d]^{\sf{For}^{L^+P}_{L^+B}}\ar@{}[rd]|{\wong{(Tr)}}
&D^b_{L^+L}(\Gr_L)\ar[d]^{\sf{For}^{L^+L}_{L^+B_L}}
\\
&D^b_{L^+B}(\Gr_G)\ar[r]^{(\cdot)^!}\ar[rd]_{(\cdot)^!}
&\ar@{}[rd]|{\wong{(BC)}}\ar@{}[dl]|<<<<{\wong{(Co)}}\ar@{}[rd]|{\wong{(BC)}}D^b_{L^+B}(\Gr_P)\ar[r]^{(\cdot)_*}\ar[d]^{(\cdot)^!}
&D^b_{L^+B}(\Gr_L)\ar[r]^{\sf{For}^{L^+B}_{L^+B_L}}\ar[d]^{(\cdot)^!}\ar@{}[rd]|{\wong{(For)}}
&D^b_{L^+B_L}(\Gr_L)\ar[d]^{(\cdot)^!}
\\
&&D^b_{L^+B}(\Gr_B)\ar[r]^{(\cdot)_*}\ar[rd]_{(\cdot)_*}
&\ar@{}[dl]|<<<<{\wong{(Co)}}\ar@{}[rd]|{\wong{(For)}}D^b_{L^+B}(\Gr_{B_L})\ar[r]^{\sf{For}^{L^+B}_{L^+B_L}}\ar[d]^{(\cdot)_*}\ar@{}[rd]|{\wong{(For)}}
&D^b_{L^+B_L}(\Gr_{B_L})\ar[d]^{(\cdot)_*}
\\
&&&D^b_{L^+B}(\Gr_T)\ar[r]^{\sf{For}^{L^+B}_{L^+B_L}}\ar[rd]_{\sf{For}^{L^+B}_{L^+T}}&\ar@{}[dl]|<<<<{\wong{(Tr)}}D^b_{L^+B_L}(\Gr_T)\ar[d]^{\sf{For}^{L^+B_L}_{L^+T}}
\\
&&&&D^b_{L^+T}(\Gr_T).
}
\]
}
\end{proof}
The connected components of $\Gr_L$ correspond to the characters of $Z(\wt{L})$, where $\wt{L}\subset \check{G}$ is the Levi subgroup containing $\check{T}$ whose roots are dual to those of $L$.
Let us write $(\Gr_L)_{\chi}$ for the connected component of $\Gr_L$ corresponding to $\chi\in \mathbb{X}^\bullet(Z(\wt{L}))$.
Set $\rho_{GL}:=\rho_G-\rho_L$, where $\rho_G,\rho_L$ are the half sum of positive roots of $G,L$, respectively.
We define a functor $\overline{\fk{R}}^G_L\colon D^b(\Gr_G,\Qlb)\to D^b(\Gr_L,\Qlb)$ by
\[
\overline{\fk{R}}^G_L(M)=\bigoplus_{\chi\in \mathbb{X}^\bullet(Z(\wt{L}))}\widetilde{\fk{R}}^G_L(M)|_{(\Gr_L)_\chi}[\langle\chi,2\rho_{GL}\rangle]
\]
and its equivariant version $\wt{\overline{\fk{R}}}^G_L\colon D^b_{L^+G}(\Gr_G,\Qlb)\to D^b_{L^+L}(\Gr_L,\Qlb)$ by
\[
\wt{\overline{\fk{R}}}^G_L(M)=\bigoplus_{\chi\in \mathbb{X}^\bullet(Z(\wt{L}))}\dwt{\fk{R}}^G_L(M)|_{(\Gr_L)_\chi}[\langle\chi,2\rho_{GL}\rangle].
\]
Then we have an isomorphism
\begin{align}\label{eqn:ForSatResbarCompatible}
\ol{\fk{R}}^G_L\circ \sf{For}^{L^+G} \Iff \sf{For}^{L^+L}\circ \wt{\ol{\fk{R}}}^G_L
\end{align}
by shifting (\ref{eqn:ForSatResCompatible}).
Set
\[
\sf{F}_G:=H^*(\Gr_G,-)\colon \Perv_{L^+G}(\Gr,\Qlb) \to \mathrm{Vect}_{\Qlb}
\]
where $\mathrm{Vect}_{\Qlb}$ is the category of finite dimensional $\Qlb$-vector spaces.

Recall the following standard result, see \cite[Proposition 5.3.29]{BD} or \cite[\S 4.1]{AHR}:
\begin{lemm} \label{Satres}\hfill
\begin{enumerate}
\item[(i)] $\wt{\overline{\fk{R}}}^G_L$ is exact with respect to the perverse $t$-structure.
\item[(ii)] The functor $\overline{\fk{R}}_L^G$ induces a functor
\[
\overline{\fk{R}}^G_L\colon \Perv_{L^+G}(\Gr_G,\Qlb)\to \Perv_{L^+L}(\Gr_L,\Qlb).
\]
\item[(iii)] There is a natural isomorphism
\begin{align} \label{Sattrans}
\overline{\fk{R}}^G_T\Iff \overline{\fk{R}}^L_T\circ \overline{\fk{R}}^G_L\colon \Perv_{L^+G}(\Gr_G,\Qlb)\to \Perv_{L^+T}(\Gr_T,\Qlb).
\end{align}
\end{enumerate}
\end{lemm}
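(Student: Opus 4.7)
The plan is to handle (i) and (ii) by reducing the equivariant $t$-exactness to a non-equivariant statement and then applying hyperbolic localization together with the Mirković-Vilonen dimension estimates; (iii) will then follow from the derived transitivity already established in Corollary \ref{Sattransderivedtilde} combined with additivity of the cohomological shifts.

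First I would reduce (i) to its non-equivariant analogue. The isomorphism (\ref{eqn:ForSatResbarCompatible}) intertwines $\wt{\ol{\fk{R}}}^G_L$ and $\ol{\fk{R}}^G_L$ through the forgetful functors, and the equivariant perverse $t$-structure on $D^b_{L^+L}(\Gr_L,\Qlb)$ is defined so that $\sf{For}$ is $t$-exact and detects it. Hence it suffices to show that $\ol{\fk{R}}^G_L$ sends every $L^+G$-equivariant perverse sheaf on $\Gr_G$ to a perverse sheaf on $\Gr_L$. For this I would use hyperbolic localization. The cocharacter $2\rho_{GL}\colon \mathbb{G}_m\to Z(L)$ produces a $\mathbb{G}_m$-action on $\Gr_G$ whose attractor is $\Gr_P$ (with $q_P$ the retraction to the fixed-point locus $\Gr_L$) and whose repeller is $\Gr_{P^-}$. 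By Braden's theorem, valid in this perfect-scheme setting by work of Richarz, there is on $L^+G$-equivariant (hence weakly $\mathbb{G}_m$-equivariant) sheaves a canonical isomorphism $(q_P)_*(i_P)^!\cong (q_{P^-})_!(i_{P^-})^*$. The Mirković-Vilonen type dimension estimate, applied to the intersections $q_P^{-1}((\Gr_L)_\chi)\cap \Gr_{G,\mu}$, together with this two-sided description of $\wt{\fk{R}}^G_L$, then shows that the shifted restriction $\ol{\fk{R}}^G_L(\mathrm{IC}_\mu)|_{(\Gr_L)_\chi}[\i<\chi,2\rho_{GL}>]$ lies in perverse degree zero; since every object of $\Perv_{L^+G}(\Gr_G,\Qlb)$ is a successive extension of such $\mathrm{IC}_\mu$, this yields (i). Statement (ii) is then immediate by transporting through the equivalence $\sf{For}\colon \Perv'_{L^+L}(\Gr_L,\Qlb)\to \Perv_{L^+L}(\Gr_L,\Qlb)$.

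For (iii), I would combine Corollary \ref{Sattransderivedtilde} with additivity of the shifts. A character $\chi\in \bb{X}^\bullet(\check{T})$ restricts along $Z(\wt{L})\hookrightarrow \check{T}$ to a character $\chi'\in \bb{X}^\bullet(Z(\wt{L}))$, and the projection $\Gr_T\to \Gr_L$ sends $(\Gr_T)_\chi$ into $(\Gr_L)_{\chi'}$. The identity $\rho_{GT}=\rho_{GL}+\rho_{LT}$ together with compatibility of the pairing under this restriction gives
\[
\i<\chi,2\rho_{GT}>=\i<\chi',2\rho_{GL}>+\i<\chi,2\rho_{LT}>,
\]
so the cohomological shift defining $\ol{\fk{R}}^G_T$ on $(\Gr_T)_\chi$ equals the sum of the two successive shifts appearing in $\ol{\fk{R}}^L_T\circ \ol{\fk{R}}^G_L$ on the same component. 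Pairing this identity with (\ref{eqn:Sattransderivedtildeequiv}) and the forgetful compatibility (\ref{eqn:ForSatResbarCompatible}) produces the desired natural isomorphism.

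The main obstacle is the $t$-exactness step in (i). The equal-characteristic argument of Mirković-Vilonen rests on the fusion product, which is not available for the mixed-characteristic affine Grassmannian of \cite{Zhumixed}; replacing it with Braden-Richarz hyperbolic localization reduces the problem to a purely combinatorial statement about the dimensions of $\Gr_{G,\mu}\cap q_P^{-1}((\Gr_L)_\chi)$, and the verification of those bounds uniformly in arbitrary Levi $L$ is the delicate geometric input.
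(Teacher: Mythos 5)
Your reduction of (i) to the non-equivariant statement via $\sf{For}$, your argument for (ii), and your additivity-of-shifts argument for (iii) all match the structure of the paper's proof. But your treatment of the core t-exactness in (i) diverges from the paper's, and it contains a genuine gap rather than a complete argument.

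You propose to prove that $\ol{\fk{R}}^G_L$ sends $L^+G$-equivariant perverse sheaves to perverse sheaves \emph{directly}, by combining Braden's hyperbolic localization with Mirkovi\'c--Vilonen-type dimension bounds on intersections such as $\Gr_{G,\mu}\cap q_P^{-1}((\Gr_L)_\chi)$ for a general Levi $L$. You yourself flag that verifying those bounds ``uniformly in arbitrary Levi $L$ is the delicate geometric input.'' That is not a verification; it is an unproved assertion, and in the mixed-characteristic setting of \cite{Zhumixed} it is precisely what is \emph{not} available off the shelf. Zhu's results (Proposition~2.7, Corollary~2.10) and Mirkovi\'c--Vilonen's Theorem~3.5 supply the torus case $L=T$, but the parabolic dimension estimates for general $L$ are not established there, and the fusion-product arguments used in equal characteristic (Beilinson--Drinfeld) do not carry over. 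Your sketch therefore rests on an input whose absence in mixed characteristic is exactly the obstacle the paper is working around.

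The paper avoids the issue entirely with a factorization argument in the style of Beilinson--Drinfeld~5.3.29: one knows $\wt{\ol{\fk{R}}}^G_T$ is $t$-exact and faithful (hence conservative on the heart) using only the torus case, and one has the derived-level transitivity $\wt{\ol{\fk{R}}}^G_T\cong\wt{\ol{\fk{R}}}^L_T\circ\wt{\ol{\fk{R}}}^G_L$. If $\cal{A}'$ is equivariant perverse then $\wt{\ol{\fk{R}}}^G_T(\cal{A}')$ is perverse, and since $\wt{\ol{\fk{R}}}^L_T$ is $t$-exact and conservative this forces $\wt{\ol{\fk{R}}}^G_L(\cal{A}')$ to be perverse. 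No dimension estimate at the level of the intermediate Levi $L$ is needed; the composite with the torus restriction does all the work. You should replace the hyperbolic-localization-plus-dimension-bounds step with this sandwiching argument, or, if you insist on the direct approach, actually supply a proof of the dimension estimates for general $L$ in the mixed-characteristic affine Grassmannian, which would be a nontrivial independent contribution.

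Your statement and proof of (iii) are essentially the paper's; the only stylistic difference is that the paper organizes it as ``(ii) implies (iii) by restricting the derived isomorphism to the equivariant perverse subcategory,'' where the derived isomorphism is the shifted version of (\ref{eqn:Sattransderivedtildeequiv}), constructed exactly from your identity $\i<\chi,2\rho_{GT}>=\i<\chi',2\rho_{GL}>+\i<\chi,2\rho_{LT}>$.
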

Now we get the restriction functor
\[
\overline{\fk{R}}^G_L\colon \Perv_{L^+G}(\Gr_G,\Qlb)\to \Perv_{L^+L}(\Gr_L,\Qlb)
\]
admitting a transitivity isomorphism \ref{Sattrans}.
\subsubsection*{Restriction functor on $\Perv_{L^+G}(\Gr^\sm_G,\Qlb)$}
Let us define the restriction functor on $\Perv_{L^+G}(\Gr^\sm_G,\Qlb)$ using the functor $\overline{\fk{R}}^G_L$.
Set
\[
\Gr_P^\sm:=\Gr_P^\circ \cap (i_P)^{-1}(\Gr_G^\sm).
\]
\begin{lemm}
There is an inclusion $q_P(\Gr_P^\sm)\subset \Gr_L^\sm$.
\end{lemm}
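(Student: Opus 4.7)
The plan is to trace points through the cartesian square of Lemma \ref{lem:Grcartesian}, tying the image in each of $\Gr_G$, $\Gr_P$, $\Gr_L$ to a single cocharacter $\lambda\in\bb X_\bullet(T)$ via the Iwasawa/semi-infinite orbit decomposition. The smallness of $\mu$ for $\check G$ will then be transferred to smallness of a $W_L$-translate of $\lambda$ for $\check L$ by root-datum bookkeeping, using that every root of $L$ is a root of $G$.

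Let $x\in\Gr_P^\sm(k)$. I first lift $x$ uniquely to $y\in\Gr_B(k)$: the composite $\Gr_B(k)\to\Gr_P(k)\to\Gr_G(k)$ is the Iwasawa bijection, and $\Gr_P(k)\to\Gr_G(k)$ is injective since $P(F)\cap G(\cal{O})=P(\cal{O})$, so $\Gr_B(k)\to\Gr_P(k)$ is bijective. By Iwasawa for $G$ there is a unique $\lambda\in\bb X_\bullet(T)$ with $y$ lying over $S_{G,\lambda}$; in particular $i_P(x)\in S_{G,\lambda}$. The cartesian square of Lemma \ref{lem:Grcartesian} sends the image of $y$ in $\Gr_{B_L}$ into the Iwasawa stratum of $\Gr_{B_L}$ over $S_{L,\lambda}\subset\Gr_L$, and hence $q_P(x)\in S_{L,\lambda}$.

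Now I translate the hypotheses. The condition $x\in\Gr_P^\circ$ gives $q_P(x)\in\Gr_L^\circ$, i.e.\ $[\lambda]=0$ in $\pi_1(L)=\bb X_\bullet(T)/\ZZ\check\Phi_L$ (using that the roots of $\check L$ are the coroots of $L$), so $\lambda\in\ZZ\check\Phi_L$. The condition $i_P(x)\in\Gr_G^\sm$ yields a dominant $\mu$ that is small for $\check G$ with $i_P(x)\in\Gr_{G,\mu}$; non-emptiness of $S_{G,\lambda}\cap\Gr_{G,\mu}$ forces $\lambda\in\mathrm{Conv}(W_G\cdot\mu)$ by the standard Mirkovi\'{c}--Vilonen intersection theory.

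Let $\nu$ be the $B_L$-dominant $W_L$-translate of $\lambda$, so that $q_P(x)\in\Gr_{L,\nu}$; it remains to verify that $\nu$ is small for $\check L$. The integrality $\nu\in\ZZ\check\Phi_L$ follows from $\lambda\in\ZZ\check\Phi_L$ and $W_L$-invariance of that lattice. For the convex-hull condition, $\mathrm{Conv}(W_L\cdot\nu)=\mathrm{Conv}(W_L\cdot\lambda)\subset\mathrm{Conv}(W_G\cdot\mu)$ because the right-hand side is $W_G$-invariant and $W_L\subset W_G$; every root $\alpha$ of $L$ is a root of $G$, so smallness of $\mu$ for $\check G$ excludes $2\alpha^\vee$ from $\mathrm{Conv}(W_G\cdot\mu)$, and hence from $\mathrm{Conv}(W_L\cdot\nu)$. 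The main obstacle is the lifting step, which reduces to the standard equality $P(F)\cap G(\cal{O})=P(\cal{O})$; everything else is pure root-data combinatorics.
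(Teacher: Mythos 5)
Your plan traces a point $x\in\Gr_P^\sm$ through the cartesian square, extracts $\lambda$ with $q_P(x)\in S_{L,\lambda}$, and then shows $\lambda\in\ZZ\check\Phi_L$ and $\lambda\in\mathrm{Conv}(W_G\cdot\mu)$. Up to this point the reasoning is sound (and you supply a useful detail the paper leaves implicit, namely that $q_P(\Gr_P^\circ)\subset\Gr_L^\circ$ forces $\lambda\in\ZZ\check\Phi_L$). The gap is the step ``Let $\nu$ be the $B_L$-dominant $W_L$-translate of $\lambda$, so that $q_P(x)\in\Gr_{L,\nu}$.'' That assertion is false in general: the datum $q_P(x)\in S_{L,\lambda}$ pins down the semi-infinite stratum, not the Schubert cell. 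By the Mirkovi\'c--Vilonen intersection theory you invoke, $S_{L,\lambda}$ meets $\Gr_{L,\nu'}$ for \emph{every} dominant $\nu'$ with the dominant translate of $\lambda$ being $\leq_{\check L}\nu'$, so $q_P(x)$ may well sit in a Schubert cell whose label $\nu'$ is strictly bigger than $\nu$. Since smallness of $\Gr_L^\sm$ is a condition on the Schubert label $\nu'$, bounding $\mathrm{Conv}(W_L\cdot\lambda)$ by $\mathrm{Conv}(W_G\cdot\mu)$ does not give any bound on $\mathrm{Conv}(W_L\cdot\nu')$, and the proof does not go through as written. (For a sanity check, take $P=G$: then the claim would say that any $x\in\Gr_G^\sm$ lying in $S_{G,\lambda}$ lies in $\Gr_{G,\lambda^{\rm dom}}$, which already fails.)

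The fix reduces you to the paper's own argument. Since $\Gr_P^\sm$ is $L^+P$-stable and $q_P$ is $L^+L$-equivariant through $L^+P\twoheadrightarrow L^+L$, the image $q_P(\Gr_P^\sm)$ is $L^+L$-stable, hence a union of Schubert cells. Thus if $q_P(x)\in\Gr_{L,\nu'}$, then $\varpi^{\nu'}\in q_P(\Gr_P^\sm)$ as well; replacing $x$ by a preimage $x'$ of $\varpi^{\nu'}$, the $\lambda$ attached to $x'$ is the dominant label $\nu'$ itself, and now your combinatorial step (roots of $L$ are roots of $G$, hence $2\check\alpha\notin\mathrm{Conv}(W_G\cdot\mu)\supset\mathrm{Conv}(W_L\cdot\nu')$, together with $\nu'\in\ZZ\check\Phi_L$) yields smallness of $\nu'$ for $\check L$. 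This is exactly what the paper does, phrased as a contrapositive: if $q_P(\Gr_P^\sm)$ met some non-small Schubert cell, then it would meet the dominant semi-infinite orbit $S_{L,\nu'}$, and then $\nu'\in\mathrm{Conv}(W_G\cdot\mu)$ gives the contradiction.
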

\begin{proof}
Assume the contrary.
Then there exists $\lambda\in \bb{X}^{\bullet}(T)$ such that $\lambda$ is not small for $\check{L}$ and $q_P(\Gr_P^\sm)\cap S_{L,\lambda}\neq \varnothing$, where $S_{L,\lambda}$ is the semi-infinite orbit defined in \S\ref{ssc:affgrass}.

From the cartesian square (\ref{square:Grcartesian}), it follows that $i_P(q_P^{-1}(S_{L,\lambda}))=S_{G,\lambda}$.
Hence we have an inclusion
\[
(\varnothing \neq) i_P(q_P^{-1}(q_P(\Gr_P^\sm)\cap S_{L,\lambda}))\subset \Gr^{\sm}_G\cap S_{G,\lambda}.
\]
This implies that there exists a $\mu\in \bb{X}^{\bullet}(T)$ small for $\check{G}$ such that $\Gr_{G,\mu}\cap S_{G,\lambda}\neq \varnothing$.

By \cite[Corollary 2.8]{Zhumixed} or \cite[Theorem 3.2]{MV}, it means that $\lambda$ is in the convex hull of $W_G\cdot \mu$.
Therefore the convex hull of $W_L\cdot \lambda$ is contained in the convex hull of $W_G\cdot \mu$, which contradicts the fact that $\lambda$ is not small for $\check{L}$.
\end{proof}
Then we have the following commutative diagram
\begin{align}\label{Grdiag}
\vcenter{
\xymatrix{
\Gr^\sm_L\ar[d]_{f_L^\circ}&\Gr_P^\sm \ar[d]_{f_P^\circ}\ar[l]_{q_P^\sm}\ar[r]^{i_P^\sm}\ar@{}[rd]|{\square}&\Gr_G^\sm \ar[d]^{f_G^\circ}\\
\Gr^\circ_L\ar[d]_{z_L}\ar@{}[rd]|{\square}&\Gr_P^\circ\ar[d]_{z_P}\ar[l]_{q_P^\circ}\ar[r]_{i_P^\circ}&\Gr_G^\circ\ar[d]^{z_G}\\
\Gr_L&\Gr_P\ar[l]^{q_P}\ar[r]_{i_P}&\Gr_G
}
}
\end{align}
where all vertical arrows are inclusions.
The top right square is cartesian by the definition of $\Gr_P^\sm$, and the bottom left square is cartesian since $q_P$ induces a bijection between the set of connected components.
Put
\begin{align*}
\underline{\fk{R}}^G_L&:=(q_P^\circ)_*\circ (i_P^\circ)^!\colon D^b(\Gr_G^\circ,\Qlb)\to D^b(\Gr_L^\circ,\Qlb),\\
\fk{R}^G_L&:=(q_P^\sm)_*\circ (i_P^\sm)^!\colon D^b(\Gr_G^\sm,\Qlb)\to D^b(\Gr_L^\sm,\Qlb).
\end{align*}
Then we have the following lemma:
\begin{lemm}\hfill
\begin{enumerate}
\item[(i)] There is a natural isomorphism
\begin{align}\label{eqn:zGIntw}
(z_L)^!\circ \overline{\fk{R}}^G_L\Iff \underline{\fk{R}}^G_L\circ (z_G)^!.
\end{align}
In particular, $\underline{\fk{R}}^G_L$ induces a functor 
\[
\underline{\fk{R}}^G_L\colon \Perv_{L^+G}(\Gr_G^\circ)\to \Perv_{L^+L}(\Gr_L^\circ).
\]
\item[(ii)] There is a natural isomorphism
\begin{align}\label{eqn:fGIntw}
(f_L^\circ)_*\circ \fk{R}^G_L\Iff \underline{\fk{R}}^G_L\circ (f_G^\circ)_*.
\end{align}
In particular, combining with (i), $\fk{R}^G_L$ induces a functor 
\[
\fk{R}^G_L\colon \Perv_{L^+G}(\Gr_G^\sm)\to \Perv_{L^+L}(\Gr_L^\sm).
\]
\end{enumerate}
\end{lemm}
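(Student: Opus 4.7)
The plan is to derive both isomorphisms from base change applied to the two marked cartesian squares in diagram~(\ref{Grdiag}), then deduce perversity via $t$-exactness properties.

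For part~(i), the identity component $\Gr^\circ_L$ corresponds to the trivial character $\chi = 0 \in \bb{X}^\bullet(Z(\wt{L}))$, so the shift $[\i<\chi,2\rho_{GL}>]$ in the definition of $\ol{\fk{R}}^G_L$ vanishes after applying $(z_L)^!$; hence $(z_L)^! \circ \ol{\fk{R}}^G_L \cong (z_L)^! \circ (q_P)_* \circ (i_P)^!$. The bottom-left square of~(\ref{Grdiag}) is cartesian with $z_L$, $z_P$ clopen immersions, so base change yields $(z_L)^! \circ (q_P)_* \cong (q_P^\circ)_* \circ (z_P)^!$. Combined with the functoriality identity $(z_P)^! \circ (i_P)^! = (i_P^\circ)^! \circ (z_G)^!$ coming from $i_P \circ z_P = z_G \circ i_P^\circ$, this proves (\ref{eqn:zGIntw}). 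For the induced-functor claim, take $M \in \Perv_{L^+G}(\Gr^\circ_G)$ and write $M \cong (z_G)^! (z_G)_* M$ (valid since $z_G$ is clopen). Then $\underline{\fk{R}}^G_L(M) \cong (z_L)^! \ol{\fk{R}}^G_L (z_G)_* M$ is perverse because $\ol{\fk{R}}^G_L$ preserves perversity by Lemma~\ref{Satres} and $(z_L)^!$ is $t$-exact as a clopen restriction.

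For part~(ii), the top-right square of~(\ref{Grdiag}) is cartesian, and the Verdier-dual of proper base change (equivalently, dualizing the universally valid $g^* f_! \cong f'_! g'^*$) gives $(i_P^\circ)^! \circ (f_G^\circ)_* \cong (f_P^\circ)_* \circ (i_P^\sm)^!$. Combining with the commutativity $f_L^\circ \circ q_P^\sm = q_P^\circ \circ f_P^\circ$ from~(\ref{Grdiag}), one chains
\[
(f_L^\circ)_* \circ \fk{R}^G_L = (q_P^\circ)_* (f_P^\circ)_* (i_P^\sm)^! \cong (q_P^\circ)_* (i_P^\circ)^! (f_G^\circ)_* = \underline{\fk{R}}^G_L \circ (f_G^\circ)_*,
\]
which is (\ref{eqn:fGIntw}). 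For the induced-functor claim, given $M \in \Perv_{L^+G}(\Gr^\sm_G)$, the object $(f_G^\circ)_* M$ is perverse because $(f_G)_* M = (z_G)_* (f_G^\circ)_* M$ is perverse by the excerpt and $(z_G)_*$ is $t$-exact and conservative (clopen). By (i), $\underline{\fk{R}}^G_L (f_G^\circ)_* M$ is perverse, so $(f_L^\circ)_* \fk{R}^G_L(M)$ is perverse. Since $(f_L^\circ)^*$ is $t$-exact (open immersion) and $(f_L^\circ)^* (f_L^\circ)_* = \mathrm{id}$, it follows that $\fk{R}^G_L(M)$ is itself perverse.

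The main technical point is the base change $(i_P^\circ)^! \circ (f_G^\circ)_* \cong (f_P^\circ)_* \circ (i_P^\sm)^!$ in part~(ii): neither is $f_G^\circ$ proper nor is $i_P^\circ$ open, so one cannot invoke any naive open-restriction or proper-pushforward base change, and must instead appeal to the Verdier-dual formulation of proper base change in the six-functor formalism on ind-perfect schemes. A secondary point is the identification of the shift on the identity component in part~(i), which is cleanly handled by the observation that the identity component corresponds to $\chi = 0$. Checking $L^+L$-equivariance throughout is routine since every morphism in (\ref{Grdiag}) is $L^+P$-equivariant, and the various forgetful and averaging compatibilities of (\ref{eqn:ForSatResbarCompatible}) transport the isomorphisms to the equivariant categories.
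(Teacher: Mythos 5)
Your construction of both isomorphisms matches the paper's: kill the shift on the identity component in part (i), assemble the two isomorphisms by pasting (Co) and (BC) read off from diagram~(\ref{Grdiag}), and deduce the induced-functor claims from $t$-exactness and full faithfulness. Part (i) is fine and your derivation of (\ref{eqn:fGIntw}) in part (ii) is also correct. One side remark is off, though: you say $f_G^\circ$ is not proper, but $\Gr_G^\sm\subset\Gr_G^\circ$ is a closed subspace (a finite union of Schubert varieties), so $f_G^\circ$ is a closed immersion and hence proper; in any case the (BC)-isomorphism $d^!b_*\cong c_*a^!$ from \S\ref{ssc:a:basechange} holds for every cartesian square without any hypothesis, so the outcome is unaffected either way.

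The step that actually needs repair is the last sentence of (ii), where you assert that $(f_L^\circ)^*$ is $t$-exact because $f_L^\circ$ is an open immersion. This is wrong: $f_L^\circ\colon \Gr_L^\sm\hookrightarrow\Gr_L^\circ$ is a \emph{closed} immersion, not an open one, and for a closed immersion $i$ the pullback $i^*$ is only right $t$-exact in general. The property to use, and what the paper uses, is that $(f_L^\circ)_*$ is $t$-exact and fully faithful. Setting $N=\fk{R}^G_L(M)$: since $(f_L^\circ)_*N$ is perverse and $(f_L^\circ)_*$ is $t$-exact, we get $(f_L^\circ)_*\,{}^p\tau^{<0}N=0$ and $(f_L^\circ)_*\,{}^p\tau^{>0}N=0$; combining with $(f_L^\circ)^*(f_L^\circ)_*=\mathrm{id}$ then forces ${}^p\tau^{<0}N={}^p\tau^{>0}N=0$, so $N$ is perverse. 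With that fix the argument is complete.
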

\begin{proof}
We can show this by the same argument as \cite[Lemma 4.3, Lemma 4.5]{AHR}
\end{proof}
In order to construct the transitivity isomorphism for $\fk{R}^G_L$, we want to find cartesian diagrams:
\begin{lemm}\label{Grsmcartesian}
The commutative square
\[
\xymatrix{
\Gr_B^\sm\ar[r]\ar[d]&\Gr_P^\sm\ar[d]\\
\Gr_{B_L}^\sm\ar[r]&\Gr_L^\sm
}
\]
is cartesian.
\end{lemm}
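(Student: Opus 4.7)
The plan is to reduce the claim to the cartesian square~(\ref{Grcirccartesian}) for the connected components, together with the inclusion $q_P(\Gr_P^\sm)\subset \Gr_L^\sm$ proved in the lemma just above. The key observation will be that, under the $\circ$-level cartesian identification, the smallness condition on a point of $\Gr_B^\circ$ is governed entirely by the $\Gr_P$-leg, with the $\Gr_{B_L}$-leg coming along for free.

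First I would unwind the definitions of $\Gr_B^\sm$ and $\Gr_{B_L}^\sm$ in parallel to that of $\Gr_P^\sm$, namely $\Gr_B^\sm := \Gr_B^\circ\cap i_B^{-1}(\Gr_G^\sm)$, where $i_B\colon \Gr_B\to \Gr_G$ factors through $\Gr_P$, and $\Gr_{B_L}^\sm := \Gr_{B_L}^\circ\cap i_{B_L}^{-1}(\Gr_L^\sm)$. Since $\Gr_P^\sm\subset \Gr_P^\circ$, the first definition is equivalent to saying $\Gr_B^\sm$ is the preimage of $\Gr_P^\sm$ in $\Gr_B^\circ$ under the natural projection $\Gr_B\to \Gr_P$.

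Next I invoke (\ref{Grcirccartesian}), which identifies $\Gr_B^\circ$ with $\Gr_P^\circ\times_{\Gr_L^\circ}\Gr_{B_L}^\circ$. A point of $\Gr_B^\circ$ is thus a pair $(p,b)$ with $q_P(p) = i_{B_L}(b)$ in $\Gr_L^\circ$, and by the rewriting above it lies in $\Gr_B^\sm$ precisely when $p\in \Gr_P^\sm$. I claim this is equivalent to the conjunction $p\in \Gr_P^\sm$ and $b\in \Gr_{B_L}^\sm$. Only the forward direction requires anything: assuming $p\in \Gr_P^\sm$, the preceding lemma gives $q_P(p)\in \Gr_L^\sm$, whence $i_{B_L}(b) = q_P(p)\in \Gr_L^\sm$, and the definition of $\Gr_{B_L}^\sm$ forces $b\in \Gr_{B_L}^\sm$.

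This identifies $\Gr_B^\sm$ with $\Gr_P^\sm\times_{\Gr_L^\sm}\Gr_{B_L}^\sm$, which is exactly the cartesianness of the square in question. I do not anticipate a real obstacle; the only subtlety is the asymmetry between the two legs of the square, and this is precisely what the inclusion $q_P(\Gr_P^\sm)\subset \Gr_L^\sm$ is designed to settle.
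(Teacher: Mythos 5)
Your proof is correct and takes essentially the same route as the paper. The paper likewise reduces to the cartesian square~(\ref{Grcirccartesian}) together with the observation that $\Gr_B^\sm$ is the preimage of $\Gr_P^\sm$ in $\Gr_B^\circ$; you additionally make explicit the role of the inclusion $q_P(\Gr_P^\sm)\subset\Gr_L^\sm$ in identifying the fiber product over $\Gr_L^\circ$ with that over $\Gr_L^\sm$, a step the paper leaves implicit when it works with $\Gr_{B_L}^\circ$ and $\Gr_L^\circ$ in the bottom row.
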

\begin{proof}
We want to prove that
\[
\xymatrix{
\Gr_B^\sm\ar[r]\ar[d]&\Gr_P^\sm\ar[d]\\
\Gr_{B_L}^\circ \ar[r]&\Gr_L^\circ \\
}
\]
is cartesian.
For that, it suffices to show that the following two squares are cartesian: 
\[
\vcenter{
\xymatrix{
\Gr_B^\sm\ar[r]\ar[d]&\Gr_P^\sm\ar[d]\\
\Gr_B^\circ \ar[r]&\Gr_P^\circ \\
}
}
\text{ and }
\vcenter{ 
\xymatrix{
\Gr_B^\circ \ar[r]\ar[d]&\Gr_P^\circ\ar[d]\\
\Gr_{B_L}^\circ \ar[r]&\Gr_L^\circ. \\
}
}
\]
The left square is cartesian by the definition of $\Gr_B^\sm,\Gr_P^\sm$, and the right square is cartesian by Lemma \ref{lem:Grcartesian}.
\end{proof}

By the same argument as Corollary \ref{Sattransderivedtilde}, we obtain the transitivity isomorphism for $\fk{R}^G_L$ from Lemma \ref{Grsmcartesian}:
\begin{cor}
There is a natural isomorphism
\begin{align*}
\fk{R}^G_T\Iff \fk{R}^L_T\circ \fk{R}^G_L\colon \Perv_{L^+G}(\Gr_G^{\sm},\Qlb)\to \Perv_{L^+T}(\Gr_T^{\sm},\Qlb).
\end{align*}
\end{cor}
\subsubsection{Category $\Rep(\check{G},\Qlb)_{\sm}$}\label{ssc:RepGcheckrestrictions}
We want to construct a restriction functor for $\Rep(\check{G},\Qlb)_{\sm}$ using the geometric Satake correspondence.

By \cite[Remark I.2.14, Proposition VI.9.6]{FS}, there is a monoidal structure on $\overline{\fk{R}}^G_L$ (see also \cite[Theorem 6.5]{B}).
Thus we have a morphism of algebraic groups
\begin{align*}
\iota^{\check{G}}_{\check{L}}\colon \check{L}&=\mathrm{Aut}^{\otimes}(\sf{F}_L)\\
&\to \mathrm{Aut}^{\otimes}(\sf{F}_L \circ \overline{\fk{R}}^G_L)\\
&\cong \mathrm{Aut}^{\otimes}(\sf{F}_G)\\
&=\check{G}.
\end{align*}
Since the following diagram of natural isomorphisms
\[
\xymatrix{
\sf{F}_G\ar@{<=>}[r]\ar@{<=>}[d]& \sf{F}_T\circ \overline{\fk{R}}^G_T\ar@{<=>}[d]\\
\sf{F}_L\circ \overline{\fk{R}}^G_L\ar@{<=>}[r]&\sf{F}_T\circ \overline{\fk{R}}^L_T\circ \overline{\fk{R}}^G_L
}
\]
is commutative, we have 
\begin{align}\label{eqn:iotatrans}
\iota^{\check{G}}_{\check{T}}=\iota^{\check{G}}_{\check{L}}\circ \iota^{\check{L}}_{\check{T}}.
\end{align}
Since $\iota^{\check{G}}_{\check{T}}$ and $\iota^{\check{L}}_{\check{T}}$ is a closed immersion of a maximal torus by \cite[\S 2.5]{Zhumixed}, it follows that $\iota^{\check{G}}_{\check{L}}$ is a closed immersion.
Thus we can identify $\wt{L}$ with $\check{L}$.

Define
\[
\overline{\rm{R}}^{\check{G}}_{\check{L}}:=(\iota^{\check{G}}_{\check{L}})_*\colon \Rep(\check{G},\Qlb)\to \Rep(\check{L},\Qlb).
\]
The equality (\ref{eqn:iotatrans}) implies that
\[
\overline{\rm{R}}^{\check{G}}_{\check{T}}=\overline{\rm{R}}^{\check{L}}_{\check{T}}\circ \overline{\rm{R}}^{\check{G}}_{\check{L}}.
\]
Put
\[
\underline{\rm{R}}^{\check{G}}_{\check{L}}:=(-)^{Z(\check{L})}\circ \ol{\rm{R}}^{\check{G}}_{\check{L}}\circ \mathbf{I}_{\check{G}}\colon \Rep(\check{G})^{Z(\check{G})}\to \Rep(\check{L})^{Z(\check{L})}.
\]
From the fact that $Z(\check{G})\subset Z(\check{L})$, it follows that 
\[
\underline{\rm{R}}^{\check{G}}_{\check{L}}\circ (-)^{Z(\check{G})}=(-)^{Z(\check{L})}\circ \ol{\rm{R}}^{\check{G}}_{\check{L}}.
\]
and that
\begin{align}\label{eqn:RepGcheckZGinvtrans}
\underline{\rm{R}}^{\check{G}}_{\check{T}}=\underline{\rm{R}}^{\check{L}}_{\check{T}}\circ \underline{\rm{R}}^{\check{G}}_{\check{L}}\colon \Rep(\check{G})^{Z(\check{G})}\to \Rep(\check{T})^{Z(\check{T})}.
\end{align}
\begin{lemm}
If $V\in \Rep(\check{G})_{\sm}$, then $V':=(\ol{\rm{R}}^{\check{G}}_{\check{L}}V)^{Z(\check{L})}\in \Rep(\check{L})_{\sm}$.
\end{lemm}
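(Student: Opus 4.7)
The plan is to analyze $V'$ weight by weight. Since $V' = (\ol{\rm{R}}^{\check{G}}_{\check{L}}V)^{Z(\check{L})}$ is a subrepresentation of $V$ viewed through $\iota^{\check{G}}_{\check{L}}$, I first need to pin down its $\check{T}$-weights, and then check the two conditions in the definition of smallness for $\check{L}$.

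First, by the transitivity $\iota^{\check{G}}_{\check{T}} = \iota^{\check{G}}_{\check{L}} \circ \iota^{\check{L}}_{\check{T}}$ (already verified above) and the fact that $\iota^{\check{G}}_{\check{T}}$ is the standard inclusion of the maximal torus coming from the geometric Satake equivalence, the $\check{T}$-weights of $\ol{\rm{R}}^{\check{G}}_{\check{L}}V$ as an $\check{L}$-representation agree with those of $V$ as a $\check{G}$-representation. The $Z(\check{L})$-invariant subspace decomposes along the weight decomposition $V = \bigoplus_\mu V_\mu$, so a weight space $V_\mu$ contributes to $V'$ precisely when $\mu|_{Z(\check{L})} = 0$. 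Hence the weights of $V'$ are exactly $\{\mu \in \mathrm{Wt}(V) \mid \mu|_{Z(\check{L})} = 0\}$.

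Next, I will identify this condition lattice-theoretically. Since $Z(\check{L}) = \bigcap_{\check{\alpha}\in\check{\Phi}_L}\ker(\check{\alpha}|_{\check{T}})$ inside $\check{T}$, the standard duality for diagonalizable groups identifies the characters of $\check{T}$ trivial on $Z(\check{L})$ with the root lattice $\mathbb{Z}\check{\Phi}_L$ of $\check{L}$. So every weight $\mu$ of $V'$ lies in $\mathbb{Z}\check{\Phi}_L$, which is the first defining condition of smallness for $\check{L}$.

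For the second condition, I use $W_L \subset W_G$: the convex hull of $W_L \cdot \mu$ is contained in the convex hull of $W_G \cdot \mu$. Since $V$ is small for $\check{G}$, this larger hull meets no $2\check{\alpha}$ with $\check{\alpha} \in \check{\Phi}$, and in particular no $2\check{\beta}$ with $\check{\beta} \in \check{\Phi}_L \subset \check{\Phi}$. Therefore every weight of $V'$ is small for $\check{L}$, proving $V' \in \Rep(\check{L},\Qlb)_{\sm}$. The only mildly delicate step is the identification in step two of the lattice of $\check{T}$-characters trivial on the (possibly non-connected) center $Z(\check{L})$ with $\mathbb{Z}\check{\Phi}_L$, but this is a standard consequence of the structure theory for the Levi $\check{L}$.
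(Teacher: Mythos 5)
Your proof is correct and follows essentially the same route as the paper: identify the $\check T$-weights of $V'$ as those weights of $V$ killed by $Z(\check L)$, observe these land in $\mathbb{Z}\check\Phi_L$ by duality, and then use $W_L\cdot\mu\subset W_G\cdot\mu$ (so the convex hull shrinks) together with $\check\Phi_L\subset\check\Phi$ to get the second smallness condition. You spell out the lattice identification of characters trivial on $Z(\check L)$ with $\mathbb{Z}\check\Phi_L$ a bit more carefully than the paper (which states it as "all $\check T$-weights of $V'$ are roots of $\check L$"), but the argument is the same two-step verification.
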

\begin{proof}
Since $Z(\check{L})$ acts trivially on $V'$, all $\check{T}$-weights of $V'$ are roots of $\check{L}$.
The convex hull of weights of $V'$ is included in the convex hull of weights of $V$, and hence no weight is of the form $2\check{\alpha}$ for a root $\check{\alpha}$ of $\check{L}$.
\end{proof}
This lemma implies that there is a unique functor $\rm{R}^{\check{G}}_{\check{L}}\colon \Rep(\check{G})_{\sm}\to \Rep(\check{L})_{\sm}$. such that
\[
\underline{\rm{R}}^{\check{G}}_{\check{L}}\circ \bb{I}^0_{\check{G}}= \bb{I}^0_{\check{L}}\circ \rm{R}^{\check{G}}_{\check{L}}.
\]
Then by (\ref{eqn:RepGcheckZGinvtrans}), we have 
\[
\rm{R}^{\check{G}}_{\check{T}}=\rm{R}^{\check{L}}_{\check{T}}\circ \rm{R}^{\check{G}}_{\check{L}}.
\]
\subsubsection{Category $\Perv_{\bar{G}}(\cal{N}_{G}^{p^{-\infty}})$}
By the same argument as \cite[\S 4.4]{AHR}, we get the restriction functor
\[
\cal{R}^G_L\colon \Perv_{\bar{G}}(\cal{N}_G^{p^{-\infty}})\to \Perv_{\bar{L}}(\cal{N}_L^{p^{-\infty}})
\]
(note that $\Perv_{\bar{G}}(\cal{N}_G)$ is canonically equivalent to $\Perv_{\bar{G}}(\cal{N}_G^{p^{-\infty}})$)
and the restriction isomorphism
\[
\cal{R}^G_T\Iff \cal{R}^L_T\circ \cal{R}^G_L\colon \Perv_{\bar{G}}(\cal{N}_G^{p^{-\infty}})\to \Perv_{\bar{T}}(\cal{N}_T^{p^{-\infty}}).
\]
\subsection{Intertwining isomorphisms and commutativity of prism}\label{scn:Intw}
By the same argument as \cite[\S 5, 6, 7]{AHR}, we can construct the intertwining isomorphisms satisfying the commutativity of the prisms:
\begin{thm}
There are natural isomorphisms
\begin{align*}
\sf{R}^{W_G}_{W_L}\circ \Phi_{\check{G}}&\Iff \Phi_{\check{L}}\circ \rm{R}^{\check{G}}_{\check{L}},\\
\cal{R}^G_L\circ \Psi_{G}&\Iff \Psi_{L}\circ \fk{R}^G_L,\\
\rm{R}^{\check{G}}_{\check{L}}\circ \scr{S}^\sm_G&\Iff \scr{S}^\sm_L\circ \fk{R}^G_L,\\
\sf{R}^{W_G}_{W_L}\circ \bb{S}_G &\Iff \bb{S}_L\circ \cal{R}^G_L
\end{align*}
which make the four prisms in \S \ref{scn:plan} commute.
\end{thm}
\subsection{The case $G$ is a torus or semisimple of rank 1}\label{scn:semisimplerank1}
As explained in \S \ref{scn:plan}, all that remains to prove the main theorem is the case $G=T$ or $G$ is semisimple of rank 1.
Assume that $p$ is good for $G$ and that $v_F(p)$ is sufficiently large so that $\Psi_G$ can be defined.
\subsubsection{The case $G=T$}
If $G=T$, then $\Gr^{\sm}_T$ and $\cal{N}_T^{p^{-\infty}}$ are both single points, and $\Psi_T$ is the canonical identification.
Moreover, $\Phi_{\check{T}}\circ \scr{S}_T^{\sm}$ is the equivalence $H^0\colon \Perv_{L^+T}(\Gr^{\sm}_T,\Qlb)\to \rm{Vect}_{\Qlb}$ and $\bb{S}_T$ is canonically isomorphic to $H^0\colon \Perv_{\bar{T}}(\cal{N}_T^{p^{-\infty}},\Qlb)\to \rm{Vect}_{\Qlb}$.
Hence we have a canonical isomorphism
\[
\alpha_T\colon \Phi_{\check{T}}\circ \scr{S}^{\sm}_T\Iff \bb{S}_T\circ \Psi_{T}.
\]
\subsubsection{The case $G$ is semisimple of rank 1}
We may assume $G=\rm{PGL}_2$ since all the functors are invariant under the replacement of $G$ by $G/Z(G)$.
Using $\alpha_T$, there is an isomorphism
\[
\phi_{G,T}\colon \sf{R}^{W_G}_{W_T}\circ \Phi_{\check{G}}\circ \scr{S}^{\sm}_G\Iff \sf{R}^{W_G}_{W_T}\circ \bb{G}_T\circ \Psi_{G}
\]
as in \S \ref{scn:plan}.
It suffices to show that this isomorphism is $W_G$-equivariant.

Identify $\bb{X}_{\bullet}(T)$ with $\bb{Z}$.
Then 
\[
\Gr^{\sm}=\Gr_0\amalg \Gr_2.
\]
Let $j_i\colon \Gr_{i}\hookrightarrow \Gr\ (i=0,2)$ be the inclusion map.
Set
\[
\rm{IC}_i:=(j_i)_{!*}(\Qlb[i]).
\]
Recall that $\Perv_{L^+G}(\Gr^{\sm}_G,\Qlb)$ is semisimple and simple objects are $\rm{IC}_0$ and $\rm{IC}_2$.
Therefore, it suffices to show that $\phi_{G,T}$ is $W_G$-equivariant for the objects $\rm{IC}_0,\rm{IC}_2$. 

Since $G=\rm{PGL}_2$, the map $\pi\colon \cal{M}_G\to \cal{N}_G^{p^{-\infty}}$ is an isomorphism (see \cite{AH}).
Hence $\Phi_{G}(\rm{IC}_i)$ is canonically isomorphic to the intermediate extension in $\cal{N}^{p^{-\infty}}$ of ${\Qlb}_{\cal{N}^{p^{-\infty}}\cap \Gr_i}$.

Note that $\cal{N}^{p^{-\infty}}\cap \Gr_0$ and $\cal{N}^{p^{-\infty}}\cap \Gr_2$ are the $\bar{G}$-orbits in $\cal{N}_G^{p^{-\infty}}$.
From the theory of the Springer correspondence, it follows that $\bb{S}_G(\Phi_{G}(\rm{IC}_0))$ is isomorphic to the trivial representation of $W=S_2$, and $\bb{S}_G(\Phi_{G}(\rm{IC}_2))$ is isomorphic to the sign representation of $W=S_2$.

On the other hand, from the theory of the geometric Satake, $\scr{S}^{\sm}_G(\rm{IC}_i)$ is the irreducible representation of $\check{G}$ with highest weight $i$.
By the Schur-Weyl duality, $\Phi_{\check{G}}(\scr{S}^{\sm}_G(\rm{IC}_0))$ is isomorphic to the trivial representation of $W=S_2$, and $\Phi_{\check{G}}(\scr{S}^{\sm}_G(\rm{IC}_2))$ is isomorphic to the sign representation of $W=S_2$.
As a result, $\bb{S}_G(\Phi_{G}(\rm{IC}_i))$ and $\Phi_{\check{G}}(\scr{S}^{\sm}_G(\rm{IC}_i))$ are irreducible as $W$-modules and their isomorphism classes coincide.

But $W=S_2$ acts on an irreducible representation by scalar multiplication, it follows that the isomorphism of vector spaces
\[
\Phi_{\check{G}}(\scr{S}^{\sm}_G(\rm{IC}_i)) \overset{\phi_{G,T}}{\cong} \bb{S}_G(\Phi_{G}(\rm{IC}_i))
\]
is automatically $W$-equivariant.
\bibliographystyle{test}
\bibliography{satakespringerbib}
\end{document}